\documentclass{amsart}
\usepackage[utf8]{inputenc}


\usepackage{mathptmx}
\usepackage{paralist}
\usepackage{graphics}
\usepackage{thmtools}
\usepackage{wasysym}
\usepackage[T1]{fontenc}    

\usepackage{amsthm}
\usepackage{amsbsy,amsmath,amssymb,amscd,amsfonts,float}
\usepackage[pagebackref=false]{hyperref}
\usepackage[nameinlink,capitalize,noabbrev]{cleveref}

\usepackage{graphicx,float,latexsym,color}
\usepackage[font={scriptsize,it}]{caption}
\usepackage{subcaption}
\usepackage[export]{adjustbox}

\usepackage{makecell}

\usepackage[dvipsnames]{xcolor}

\newtheorem{theorem}{Theorem}
\newtheorem*{theorem*}{Theorem}

\newtheorem{proposition}{Proposition}
\newtheorem{conjecture}{Conjecture}
\newtheorem{corollary}{Corollary}

\theoremstyle{remark}
\newtheorem{remark}{Remark}
\theoremstyle{definition}
\newtheorem{definition}{Definition}

\hypersetup{
    pdftoolbar=true,        
    pdfmenubar=true,        
    pdffitwindow=false,     
    pdfstartview={FitH},    
    colorlinks=true,       
    linkcolor=OliveGreen,          
    citecolor=blue,        
    filecolor=black,      
    urlcolor=red           
}

\usepackage{lineno}

\arraycolsep=2pt
\captionsetup{width=120mm}

\usepackage{comment}
\usepackage{microtype}
\usepackage{footnote}

\newcommand{\E}{\mathcal{E}}

\newcommand{\C}{\mathcal{C}}

\renewcommand{\L}{\mathcal{L}}
\renewcommand{\P}{\mathcal{P}}

\newcommand{\Y}{\mathcal{Y}}
\renewcommand{\H}{\mathcal{H}}

\setlength{\abovecaptionskip}{5 pt}
\setlength{\belowcaptionskip}{0pt}
\setlength{\parskip}{0em}
\setlength{\parindent}{1em}
\setlength{\textfloatsep}{0.5 em}
\usepackage[compact]{titlesec}
\titleformat{name=\section}{}{\thetitle.}{1em}{\centering\scshape}
\titleformat{name=\subsection}[runin]{}{\thetitle.}{0em}{\bfseries}
\titlespacing{\section}{0pc}{1.5ex plus .1ex minus .2ex}{0pc}

\titlespacing{\subsection}{0pc}{0.5pc}{0.5em}

\crefname{conjecture}{Conjecture}{Conjectures}
\crefname{definition}{Definition}{Definitions}

\newcommand{\rulesep}{\unskip\ \vrule\ }
\usepackage{esvect}


\title{Triads of Conics Associated with a Triangle\vspace{-1em}}
\author[R. Garcia]{Ronaldo Garcia}
\thanks{R. Garcia, Federal Univ. of Goiás, Brazil. \texttt{ragarcia@ufg.br}}
\author[L. Gheorghe]{Liliana G. Gheorghe}
\thanks{L. Gheorghe, Federal Univ. of Pernambuco, Brazil. \texttt{liliana@dmat.ufpe.br}}
\author[P. Moses]{Peter Moses}
\thanks{P. Moses, Moparmatic Inc., Worcestershire, England, \texttt{moparmatic@gmail.com}}
\author[D. Reznik]{Dan Reznik}
\thanks{D. Reznik$^*$, Data Science Consulting Ltd., Rio de Janeiro, Brazil. \texttt{dreznik@gmail.com}}

\begin{document}

\maketitle
{\vspace{-.75cm}
\centering\small Dedicated to Paul Yiu \par}

\vspace{-.25cm}
\begin{abstract}
We revisit constructions based on triads of conics with foci at pairs of vertices of a reference triangle. We find that their 6 vertices lie on well-known conics, whose type we analyze. We give conditions for these to be circles and/or degenerate. In the latter case, we study the locus of their center. 
\vskip .2cm
\noindent\textbf{Keywords} triangle, conic, Carnot, Soddy circles.
\vskip .2cm
\noindent \textbf{MSC} {51M04
\and 51N20 \and 51N35\and 68T20}
\end{abstract}


\section{Introduction}
Paraphrasing a passage in \cite{sharp2015-artzt}, ``new tools of interactive geometry enable the discovery of properties in a way mathematicians in the past could only have dreamed about''. Aided by interactive simulation (mostly Mathematica and GeoGebra), and inspired by a construction by Paul Yiu \cite[Sec. 12.4, p. 148]{yiu2001-intro}, we tour curious dynamic phenomena manifested by triads of ellipses (or hyperbolas) naturally associated with a triangle. Namely, we attach their foci to a pair of vertices and impose that  the conic pass through either (i) the remaining vertex, or (ii), some chosen point $P$. We call these ``V-'' or ``P''-conics, respectively, see \cref{fig:intro}. Some of our main results include:

\begin{figure}
    \centering
    \includegraphics[trim=60 120 60 110,clip,width=\textwidth]{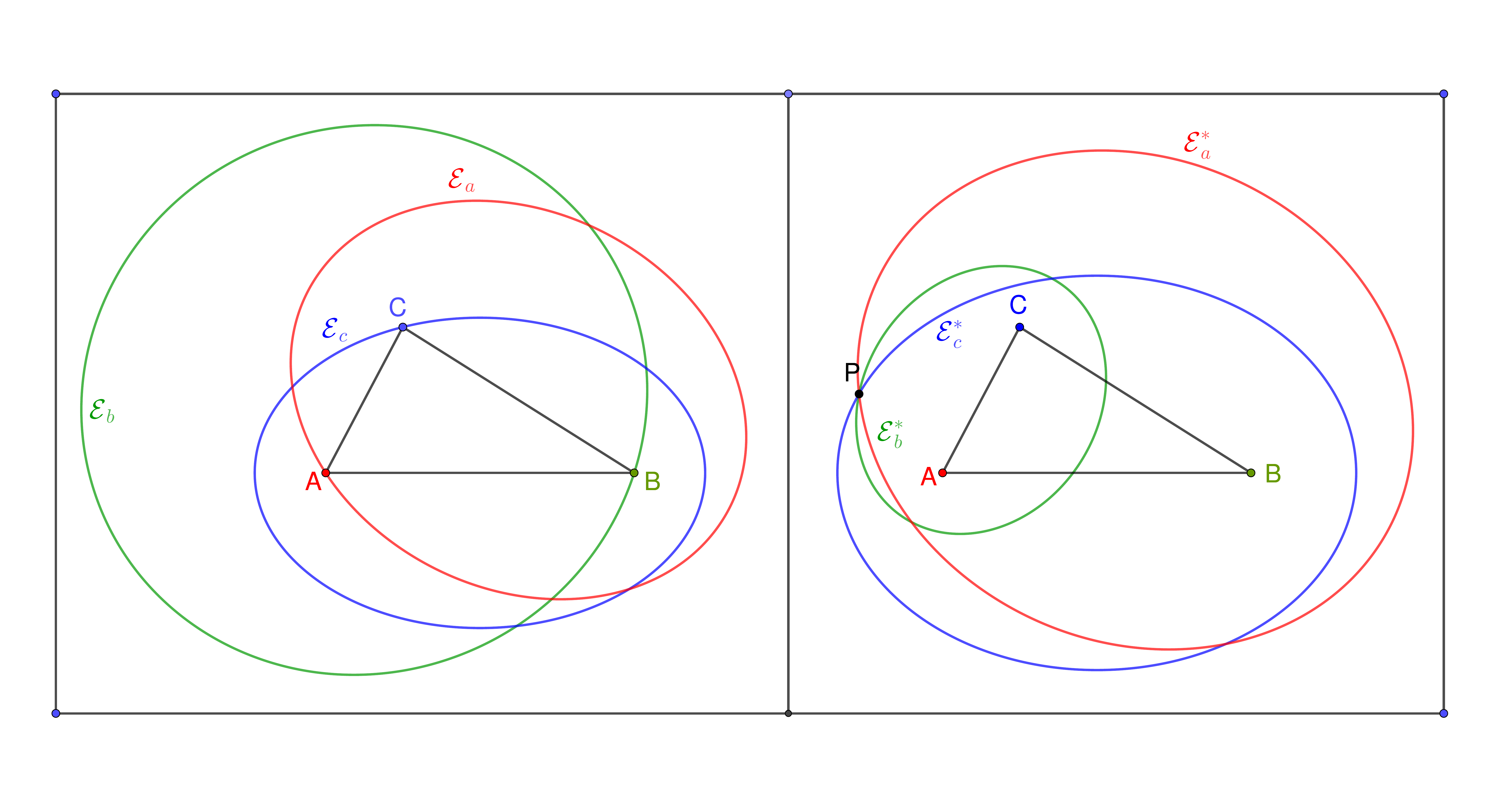}
    \caption{\textbf{Left:} A $\triangle ABC$, and a V-triad of ellipses passing through a vertex and with foci on the remaining pair. \textbf{Right:} in the P-triad case, ellipses still have foci on pairs of vertices but now pass through a given point $P$.}
    \label{fig:intro}
\end{figure}

\begin{compactitem}
    \item The 6 vertices of V-ellipses always lie on a conic; this conic
    is degenerate iff the reference triangle is a right triangle.
    \item The conic passing through the 6 $P$-ellipse vertices is degenerate iff $P$ lies on the circumcircle.
    \item The locus of the center of the 6-point conic over the degenerate family is a quartic in the V-ellipse case, and the union of three arcs of ellipses in the P-ellipse case; we derive expressions for them.
    \item We specify the regions such that various 6-point conics are of a given type (hyperbola, ellipse, parabola, or degenerate).
    \item We derive conditions such that various 6-point conics are a circle.
    \item We derive conditions (and loci) under which the co-vertices of conic triad lie on a conic.
\end{compactitem}

Some of the above are done for the case of hyperbola triads as well. Most of our results have been obtained through experimentation with dynamic geometry software first, and later
confirmed geometrically and/or algebraically.
See \cite{garcia2020-ellipses} for details.

Some long, symbolic proofs are omitted, with some expressions appearing in \cref{app:barys}. Throughout the paper we will be using $X_k$ notation for triangle centers, after \cite{etc}.

\subsection*{Related Work}

We have been inspired by the idea of erecting identical geometrical objects to the sides of a triangle, e.g.,  \cite{ cerin2002-flanks,dosa2007-ext, lamoen2004-flank, moses2021-hex}. Triads of ``Artzt'' parabolas, conceived in the XIX century, have been revisited in \cite{dergiades2010-artzt,klaoudatos2011-artzt,thebault1957-parabolas}. In \cite{sharp2015-artzt}, new properties of Artzt parabolas are detected via dynamic geometry software. Properties of conic triads with a shared focus are studied in  \cite{bogdanov2012-ellipses}. A 6-point conic passing through the tangency point of the excircles (which turns out to coincide with the vertices of V-ellipses) is described in \cite{bradley-conic,yiu1999-clawson}. A Construction of 3 ``Soddy'' hyperbolas (called here V-hyperbolas) with foci on vertices appears in \cite[Sec. 12.4, p. 148]{yiu2001-intro}. Properties of a triad of circles tangent to the nine-point circle are studied in \cite{odehnal2014-triads}.

\subsection*{Article organization}

Properties of triads of V-ellipses, P-ellipses, V-hyperbolas, and P-hyperbolas, are covered in \cref{sec:v-ell,sec:p-ell,sec:v-hyp,sec:p-hyp}, respectively. In 
last section we pose to the reader a few open questions. The last section  contains some long-form symbolic expressions for a construction appearing in \cref{sec:v-ell}.

\section{A triad of V-ellipses}
\label{sec:v-ell}
Referring to \cref{fig:intro}:

\begin{definition}[V-ellipses]
Given a triangle $\triangle ABC$, a triad of V-ellipses $\E_a,\E_b,\E_c$ have foci on $(B,C)$, $(C,A)$, $(A,B)$ and pass through $A$, $B$, and $C$, respectively.
\end{definition}

\begin{proposition} The V-ellipses
$\E_a,\E_b,\E_c$ are centered at the midpoints
of $\triangle{ABC}$'s sides. Their vertices\footnote{These refer to the intersection of a conic with the focal axis.} are the (external) tangency points of the excircles with triangle's  sidelines and lie on a conic, $\Y$.
\label{prop:yiu3}
\end{proposition}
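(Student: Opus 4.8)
The plan is to establish the three assertions in turn, using the focal (string) characterization of an ellipse together with the converse of Carnot's theorem. Throughout write $a=|BC|,\,b=|CA|,\,c=|AB|$ and $s=(a+b+c)/2$.

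First I would record the two elementary facts about each V-ellipse directly from its foci. Since $\E_a$ has foci $B$ and $C$, its center is by definition the midpoint of $BC$; the analogous statements for $\E_b,\E_c$ follow by cyclic relabeling, settling the claim about centers. For the vertices, note that the focal axis of $\E_a$ is the line $BC$, so its two vertices are the intersections of $\E_a$ with that line. Because $A$ lies on $\E_a$, the semi-major axis satisfies $2a_{\E}=|AB|+|AC|=b+c$, and a point $X$ of line $BC$ lies on $\E_a$ exactly when $|XB|+|XC|=b+c$. Solving this on the line (both solutions lie outside segment $BC$, since $b+c>a$) gives one vertex beyond $C$ at distance $s-a$ from $C$, hence distance $s$ from $B$, and one vertex beyond $B$ at distance $s-a$ from $B$, hence distance $s$ from $C$.

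Next I would identify these points with excircle contacts via the tangent-length formulas: the $B$-excircle meets line $BC$ at distance $s$ from $B$ (on the extension beyond $C$), and the $C$-excircle meets line $BC$ at distance $s$ from $C$ (beyond $B$), coinciding exactly with the two vertices just found. Cyclically, the vertices of $\E_b$ on line $CA$ are the contacts of the $C$- and $A$-excircles, and those of $\E_c$ on line $AB$ are the contacts of the $A$- and $B$-excircles. Hence the six vertices are precisely the six external tangency points of the three excircles with the sidelines, each excircle contributing its two contacts with the extended sides. To show these six points lie on a conic $\Y$, I would invoke the converse of Carnot's theorem: six points lying two-by-two on the sidelines of $\triangle ABC$ are on a conic provided the product of the six signed contact ratios equals $1$. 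The key observation is that the two vertices on line $BC$ are symmetric about the midpoint of $BC$, being the endpoints of the major axis of $\E_a$; for any pair $U_1,U_2$ symmetric about the midpoint of $BC$, a one-line signed-length computation shows the Carnot factor $\overline{BU_1}\cdot\overline{BU_2}/(\overline{CU_1}\cdot\overline{CU_2})$ equals $1$. The same holds cyclically, so the total Carnot product is $1\cdot 1\cdot 1=1$ and the six points lie on a common conic.

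I expect the only delicate points to be bookkeeping ones: matching each vertex to the correct excircle and keeping signed lengths consistent so each Carnot factor genuinely evaluates to $1$, and checking that the five points feeding the converse of Carnot are in general position (no three collinear), which holds for a nondegenerate $\triangle ABC$. As a fallback, incidence on a conic can be verified directly by writing the six contacts in barycentric coordinates and checking that the relevant $6\times 6$ determinant vanishes; I would keep this in reserve rather than grind through it, since the Carnot argument is shorter and also explains why the phenomenon persists for the P-ellipse triads, whose vertices are likewise symmetric about the side midpoints.
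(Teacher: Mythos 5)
Your proof is correct and takes essentially the same route as the paper: locate the two vertices on each sideline from the focal condition ($|XB|+|XC|=b+c$), identify them with the excircle contact points via tangent lengths, and conclude with Carnot's theorem; the only difference is that you evaluate the Carnot product via the midpoint symmetry of each vertex pair rather than by substituting the explicit lengths $s$, $s-a$, $s-b$, $s-c$, which is exactly the argument the paper itself reuses later for the P-ellipse triad. One caveat: your parenthetical claim that no three of the six points are ever collinear for a nondegenerate $\triangle ABC$ is false --- the paper later proves that $A_1,B_1,C_1$ are collinear precisely when $\triangle ABC$ is right-angled --- but this costs nothing, since in that case the six points lie on a degenerate conic (a line pair), which the Carnot argument still delivers.
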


\begin{proof}
Let $a,b,c$ be the sidelengths of $\triangle ABC$.
Let $({I}_a),({I}_b),({I}_c)$ the escribed circles and let 
 $A_1$, $A_2$, $B_1$, $B_2$, $C_1$, $C_2$  their (external) tangency points with the lines $BC$, $CA$, $AB$,
 as shown in \cref{fig:x20}.
 We shall prove that these points are the  intersection  of the V-ellipses with 
their focal axis  $BC$, $CA$, $AB$, hence their vertices.
Elementary properties of tangents from a point to a circle yield:
\begin{align}
AC_2=&AB_1=BA_2=BC_1=CA_1=CB_2=p, \nonumber \\
BA_1=&CA_2=p-a,\;\;\;
AB_2=CB_1=p-b,\;\;\;
AC_1=BC_2=p-c  \label{eq:tg1} 
\end{align}
where $p=(a+b+c)/2$ is the semi-perimeter. Hence:
\[A_1 A_2=A_1B+BC+CA_2=(p-a)+a+(p-a)=
2p- a=b+c.\]
Since $A_1 B= A_2C,$ and since points $A_1$, $A_2$, $B$ and $C$ are collinear, the former are precisely the two vertices of $\E_a$. Furthermore, the segments 
 $BC$ and $A_1 A_2$ share their midpoint,   the center of $\E_a$. The proof for $\E_b$ and $\E_c$
is similar. 

 In order to prove that their six  vertices
 are on a conic,
 by Carnot's Theorem, it is enough to check that
\begin{equation}
\frac{AC_1}{BC_1}\cdot\frac{AC_2}{BC_2}\cdot
\frac{B A_1}{CA_1}\cdot \frac{BA_2}{CA_2}\cdot
\frac{CB_1}{AB_1}\cdot \frac{CB_2}{AB_2}=1
\label{eq:carnot}
\end{equation}

This claim is obtained by substituting \cref{eq:tg1} into \cref{eq:carnot}.
\end{proof}

\begin{remark} The fact that a conic passes through the six external tangency points with the excircles was discovered by Paul Yiu \cite{yiu1999-clawson}. In \cite{etc} its center is labeled $X_{478}$.
\end{remark}

It can be shown that the Yiu conic $\mathcal{Y}$ can never be a circle except when $\triangle ABC$ is an equilateral.

\begin{proposition}
Each V-ellipse $\E_a,\E_b,\E_c$ is respectively tangent at $A,B,C$ to the sides of the excentral triangle.
\end{proposition}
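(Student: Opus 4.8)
The plan is to invoke the optical (reflective) property of the ellipse together with the classical identification of the sides of the excentral triangle as the three external bisectors of $\triangle ABC$.

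First I would recall that the excentral triangle has vertices at the excenters $I_a,I_b,I_c$, and that the excenters $I_b$ and $I_c$ both lie on the external bisector of the angle at $A$: indeed $I_b$ is the concurrence of the external bisectors at $A$ and $C$ with the internal bisector at $B$, and $I_c$ the concurrence of the external bisectors at $A$ and $B$ with the internal bisector at $C$, so both sit on the external bisector at $A$. Hence the side $I_bI_c$ coincides with the external bisector of $\angle A$, and $A$ itself lies on this line. Cyclically, $I_cI_a$ is the external bisector at $B$ (through $B$) and $I_aI_b$ is the external bisector at $C$ (through $C$).

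Next I would apply the reflective property of $\E_a$: at any point of an ellipse the tangent makes equal angles with the two focal radii, so the normal there bisects internally the angle subtended by the foci, while the tangent is the corresponding external bisector. Since $\E_a$ has foci $B,C$ and passes through $A$, the focal radii at $A$ are $AB$ and $AC$, subtending exactly $\angle BAC$. Therefore the tangent to $\E_a$ at $A$ is the external bisector of $\angle A$, which is precisely the side $I_bI_c$ identified above.

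Combining the two facts yields the tangency of $\E_a$ at $A$ with $I_bI_c$, and the assertions for $\E_b$ at $B$ and $\E_c$ at $C$ follow by the cyclic symmetry of the construction. I expect no serious obstacle, as both ingredients are classical; the only point demanding care is the bookkeeping of internal versus external bisectors — confirming that it is the tangent (not the normal) that is the external bisector, and that it is the external (not internal) bisector of $\angle A$ forming the relevant side of the excentral triangle. Should a fully self-contained argument be preferred, one could instead verify the tangency directly in barycentric coordinates, but the reflective-property route is considerably shorter.
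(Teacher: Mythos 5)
Your proposal is correct and follows essentially the same route as the paper's proof: identify the sides of the excentral triangle as the external bisectors of $\triangle ABC$, then invoke the optical (reflective) property of the ellipse to conclude that the external bisector at each vertex is the tangent line to the corresponding V-ellipse there. Your write-up is in fact a bit more careful than the paper's, spelling out why $I_b$ and $I_c$ lie on the external bisector at $A$ and distinguishing tangent from normal, but the underlying argument is identical.
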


\begin{proof} Referring to \cref{fig:x20},
since $I_a,I_b,I_c$ are the centers of the escribed circles, the lines $I_b I_c$, 
$I_c I_a$, $I_a I_b$ are the external bisectors
of $\angle{BAC}$, $\angle{ACB}$, and $\angle{BCA}$; thus
 $A I_a$, $BI_b$ $CI_c$ are altitudes in $\triangle{I_a I_b I_c}$
as well as (internal) bisectors of $\triangle{ABC}.$
By the optic propriety of conics, lines $I_b I_c$, $I_c I_a$, $I_a I_b$ are also the tangents in $A$, $B$, $C$ to the ellipses
$\E_a$, $\E_b$, $\E_c$.
\end{proof}

Referring to \cref{fig:x20}, let $(A',A'')$, $(B',B'')$, and $(C',C'')$ denote the pairwise intersections between $(\E_b,\E_c)$, $(\E_a,\E_c)$, and $(\E_a,\E_b)$, respectively. 

\begin{proposition}
The lines through $A',A''$, $B',B''$, $C',C''$ pass through the 3 excenters $I_a,I_b,I_c$, respectively, and concur at  the de Longchamps' point $X_{20}$.
\end{proposition}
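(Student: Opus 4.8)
The plan is to reduce the whole statement to two explicit incidences for a single chord $\ell_a:=A'A''$ and then invoke the cyclic symmetry $a\to b\to c,\ A\to B\to C$. I would place $B=(-a/2,0)$, $C=(a/2,0)$ and $A=(A_x,A_y)$ with $A_x=(c^2-b^2)/(2a)$, so that the signed quantities $PB^2-PC^2=2ax$ and $PA^2-PB^2=-2P\cdot(A-B)+|A|^2-|B|^2$ are both \emph{affine} in $P=(x,y)$. A point $P\in\E_b\cap\E_c$ satisfies $PA+PC=a+c$ and $PA+PB=a+b$; writing $u=PA,\ v=PB,\ w=PC$ this gives $v=a+b-u$, $w=a+c-u$, and in particular $v-w=b-c$.

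First I would extract the chord as a genuine line. From $v^2-w^2=(v-w)(v+w)$ together with $v^2-w^2=PB^2-PC^2=2ax$ one gets $2ax=(b-c)(2a+b+c-2u)$, so $u=PA$ is an affine function of $x$. Substituting this into the identity $PA^2-PB^2=u^2-v^2=(a+b)\,(2u-(a+b))$ and equating the result with the affine expression for $PA^2-PB^2$ in $(x,y)$ eliminates the radicals and produces a single linear equation. This is the common chord $\ell_a$, and it automatically contains both real intersection points $A',A''$. The exceptional case $b=c$, where one divides by $b-c$, follows by continuity; there $\ell_a$ is simply the axis of symmetry of the configuration, on which $I_a$ and $X_{20}$ both lie.

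Second, I would verify the two required incidences by direct substitution into this linear equation. For the excenter, substitute $I_a=(-a:b:c)$, i.e.\ $I_a=(-aA+bB+cC)/(b+c-a)$; a short computation confirms that $\ell_a$ is satisfied, and the cyclic images give $I_b\in\ell_b$ and $I_c\in\ell_c$. For the concurrence, substitute the de Longchamps point $X_{20}$, which I would take in the form $X_{20}=2X_3-X_4$ (the reflection of the orthocenter in the circumcenter); checking that $X_{20}$ also satisfies the equation of $\ell_a$ and then applying cyclic symmetry shows $X_{20}\in\ell_a\cap\ell_b\cap\ell_c$. Since each $\ell_a$ already passes through the vertex $I_a$ of the excentral triangle, this simultaneously exhibits the three chords as the cevians of $\triangle I_aI_bI_c$ through $X_{20}$, so their concurrence there needs no separate argument.

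The conceptual content is short, so the main obstacle is purely computational: simplifying the two substitution checks — especially the one for $X_{20}$, whose coordinates are quartic in $a,b,c$ — down to a polynomial identity. I would handle this by clearing the factor $b-c$, writing $A_x,A_y$ in terms of $a,b,c$ and the area, and letting a computer algebra system confirm the two resulting identities; the cyclic symmetry then removes any need to repeat the computation for $\ell_b$ and $\ell_c$.
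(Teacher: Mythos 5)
Your proposal is correct, and although it ends the same way as the paper's proof---an explicit equation for the chord, substitution checks for $I_a$ and $X_{20}$, cyclic symmetry, and a CAS for the final polynomial identities---the way you obtain the chord is genuinely different. The paper works in barycentrics and explicitly solves for the intersection points $A',A''$ of $\E_b$ and $\E_c$ (long expressions involving twice the area $S$, with $A''$ given by $S\to -S$), and only then writes down the line through them, $-(b-c)(a+b+c)^2x-(a+b-c)^2(a+c)y+(a+b)(a-b+c)^2z=0$. You never compute $A',A''$ at all: from the focal-sum relations $PA+PC=a+c$ and $PA+PB=a+b$ you get $PB-PC=b-c$, and because $PB^2-PC^2$ and $PA^2-PB^2$ are affine in $P$, differencing the defining relations eliminates all radicals and yields the chord directly as a single linear equation---a radical-axis-style elimination adapted to ellipses sharing the focus $A$. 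That buys a radical-free and much smaller computation (the only CAS work left is two incidence checks), at the cost of not producing the points $A',A''$ themselves, which the paper's proof records and which have independent interest. Your two safeguards are also sound: the derived line is only guaranteed to contain the \emph{real} intersections (complex intersections of the algebraic conics need not satisfy the distance relations), which is all the proposition requires; and when $b=c$ the relation $PB=PC$ forces both intersections onto the axis of symmetry, which visibly contains $I_a$ and $X_{20}$, so the division by $b-c$ is harmless.
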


\begin{proof}
 It can be shown that the $\E_a$ is given by the following implicit equation in barycentric coordinates $[x,y,z]$:
\begin{align*}
\E_a:\;\;&4 c (b + c) x y - (a - b - c) (a + b + c) y^2 + 4 b (b + c) x z \;+\\
&+\; 2 (a^2 + b^2 + 2 b c + c^2) y z - (a - b - c) (a + b + c) z^2 = 0
\end{align*}
$\E_b,\E_c$ can be obtained cyclically on $a,b,c$. The barycentrics for the vertices of $\E_a$ are $A_1=[0, a + b + c, a - b - c]$ and $A_2=[0, a - b - c, a + b + c]$.
Let $S$ be twice the area of $\triangle ABC$. The two real intersections $A',A''$ between $\E_b,\E_c$ are given by:
{\small
\begin{align*}
A'=&\left[(a - b - c)(a + b - c)(a - b + c)(3a^2 + 2 a b - b^2 + 2 a c + 2 b c - c^2)\right. +\\
&+4 (-2 a^3 - a^2 b - b^3 - a^2 c + b^2 c + b c^2 - c^3) S,\\
&(a - b - c) (a - b + c) (a + b + c) (a^2 - 2 a b - 3 b^2 + 2 a c + 2 b c + c^2) +\\ 
&+4 (a^3 + a b^2 + 2 b^3 + a^2 c - b^2 c - a c^2 - c^3) S,\\
&(a - b - c) (a + b - c) (a + b + c) (a^2 + 2 a b + b^2 - 2 a c + 2 b c - 3 c^2) +\\
&\left. +4 (a^3 + a^2 b - a b^2 - b^3 + a c^2 - b c^2 + c^3) S \right]
\end{align*}
}
and $A''$ is obtained as above but with $S \to -S$. The intersections $B',B''$ and $C',C''$ are obtained cyclically. The line $A'A''$ is then given by:
\[-(b - c)(a + b + c)^2 x  -(a + b - c)^2 (a + c) y + (a + b) (a - b + c)^2 z = 0\]

It can be shown this line passes through excenter $I_a$. The other lines can be obtained cyclically. It can also be shown these meet at $X_{20}$, whose first barycentric coordinate is given by \cite{etc}: $[-3 a^4 + 2 a^2(b^2 + c^2) + (b^2 - c^2)^2]$, with the other two obtained cyclically.
\end{proof}

Referring to \cref{fig:locus-v-triad}:

\begin{proposition}
When $\triangle ABC$ is a right triangle, the V-ellipses pass through the reflection of the orthocenter on the circumcenter, the de Longchamps point $X_{20}$.
\end{proposition}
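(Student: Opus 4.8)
The plan is to exploit the focal-sum characterization of the V-ellipses together with the special position of $X_{20}$ in a right triangle. Since the triad $\E_a,\E_b,\E_c$ and the point $X_{20}$ are all defined symmetrically under permuting the labels $A,B,C$, I may assume without loss of generality that the right angle sits at $A$, so that $a=|BC|$ is the hypotenuse and $a^2=b^2+c^2$.

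First I would pin down $X_{20}$ in elementary terms. For a right triangle with right angle at $A$ the orthocenter is $H=A$ and the circumcenter $O$ is the midpoint of the hypotenuse $BC$. Hence the de Longchamps point, being the reflection of $H$ in $O$, is
\[ X_{20}=2O-H=B+C-A. \]
From $\overrightarrow{BX_{20}}=C-A=\overrightarrow{AC}$ and $\overrightarrow{CX_{20}}=B-A=\overrightarrow{AB}$ it follows that $ABX_{20}C$ is a parallelogram; because $\angle A=\pi/2$ it is in fact a rectangle. Reading off its opposite sides and its diagonal gives the three distances I will need:
\[ |X_{20}B|=|AC|=b,\qquad |X_{20}C|=|AB|=c,\qquad |X_{20}A|=|BC|=a. \]

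Next I would recall, from the proof of \cref{prop:yiu3}, that each V-ellipse is a locus of constant focal sum equal to the sum realized at its defining vertex: explicitly $\E_a=\{X:\;|XB|+|XC|=b+c\}$, and cyclically $\E_b=\{X:\;|XC|+|XA|=c+a\}$ and $\E_c=\{X:\;|XA|+|XB|=a+b\}$. Substituting the three distances above yields $|X_{20}B|+|X_{20}C|=b+c$, then $|X_{20}C|+|X_{20}A|=c+a$, and finally $|X_{20}A|+|X_{20}B|=a+b$, so $X_{20}$ satisfies all three focal-sum equations and therefore lies on $\E_a$, $\E_b$, and $\E_c$ simultaneously.

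I expect no serious obstacle: once $X_{20}=B+C-A$ and the rectangle structure are in hand, the verification is immediate. It is worth noting exactly where the hypothesis enters. The relations $|X_{20}B|=b$ and $|X_{20}C|=c$ are parallelogram identities valid for \emph{any} triangle, so $B+C-A$ always lies on $\E_a$ (indeed $A$ and $B+C-A$ are antipodal on $\E_a$, which is centered at the midpoint of $BC$). The right-triangle condition is used in precisely one spot: it turns the parallelogram into a rectangle, forcing the diagonal $|X_{20}A|$ to equal $|BC|=a$, which is what makes $X_{20}$ land on $\E_b$ and $\E_c$. The only point requiring a word of care is the reduction to ``right angle at $A$,'' justified by the symmetry of the construction; alternatively one could confirm the conclusion in barycentrics using the explicit equation for $\E_a$ and the listed coordinates of $X_{20}$ under the right-triangle constraint, but the metric argument above is cleaner.
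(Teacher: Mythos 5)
Your proof is correct and follows essentially the same route as the paper's: identify $X_{20}$ as the reflection of the right-angle vertex across the midpoint of the hypotenuse, note that it lies on the ellipse whose foci are the other two vertices by central symmetry (your parallelogram identity), and use the rectangle structure — equal diagonals and opposite sides — to verify the focal-sum condition for the remaining two ellipses. The only differences are cosmetic: the paper places the right angle at $C$ rather than $A$, and handles one ellipse by symmetry and two by the rectangle, exactly as you do.
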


\begin{proof}
Let $C$ denote the right-angle vertex of $\triangle ABC$, and $C'$ its reflection about the circumcenter $X_3$. We shall prove that each V-ellipse passes through $C'$. Due to central symmetry, this is trivially true for $\E_c$. Consider $\E_a$: since its foci are $B,C$ and it passes through $A$, its major axis has length $|AC|+|AB|$. Since $ACBC'$ is a rectangle, $|AC|=|BC'|$, and $|BC|=|C'A|$. Hence $ |C'B|+|CC'|=|AC|+|AB|$, which ensures that $C'\in \E_a$. Similarly $C'\in \E_b.$
\end{proof}

\begin{figure}
    \centering
    \includegraphics[width=.8\textwidth]{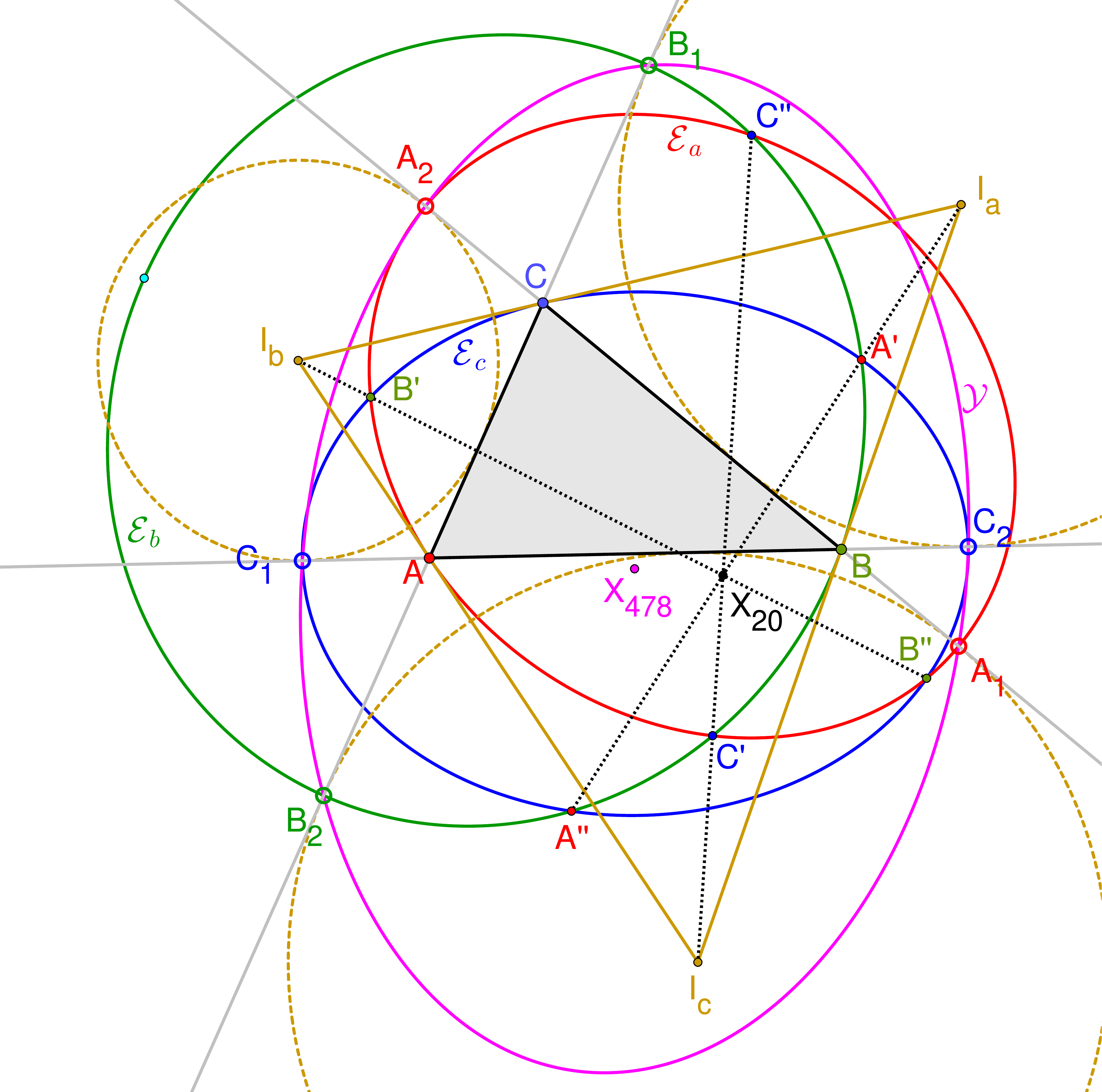}
    \caption{Properties of a V-ellipses $\E_a,\E_b,\E_c$ (red, green, blue) with respect to a $\triangle ABC$ (black). (i) Its vertices are at the tangency points of the excircles (dashed gold) with the sidelines; hence they lie on the Yiu conic (magenta) \cite{yiu1999-clawson}. (ii) Each ellipse is tangent at $A,B,C$ to a side of the excentral triangle $\triangle{I_a I_b I_c}$. (iii) The 3 chords $A'A''$, $B'B''$, and $C'C''$ between the intersections of $(\E_b,\E_c)$, $(\E_c,\E_a)$,
    $(\E_a,\E_b)$
    pass through $I_a,I_b,I_c$, and concur at $X_{20}$.}
    \label{fig:x20}
\end{figure}

\begin{figure}
    \centering
    \includegraphics[trim=-10 50 180 400,clip,width=.8\textwidth,frame ]{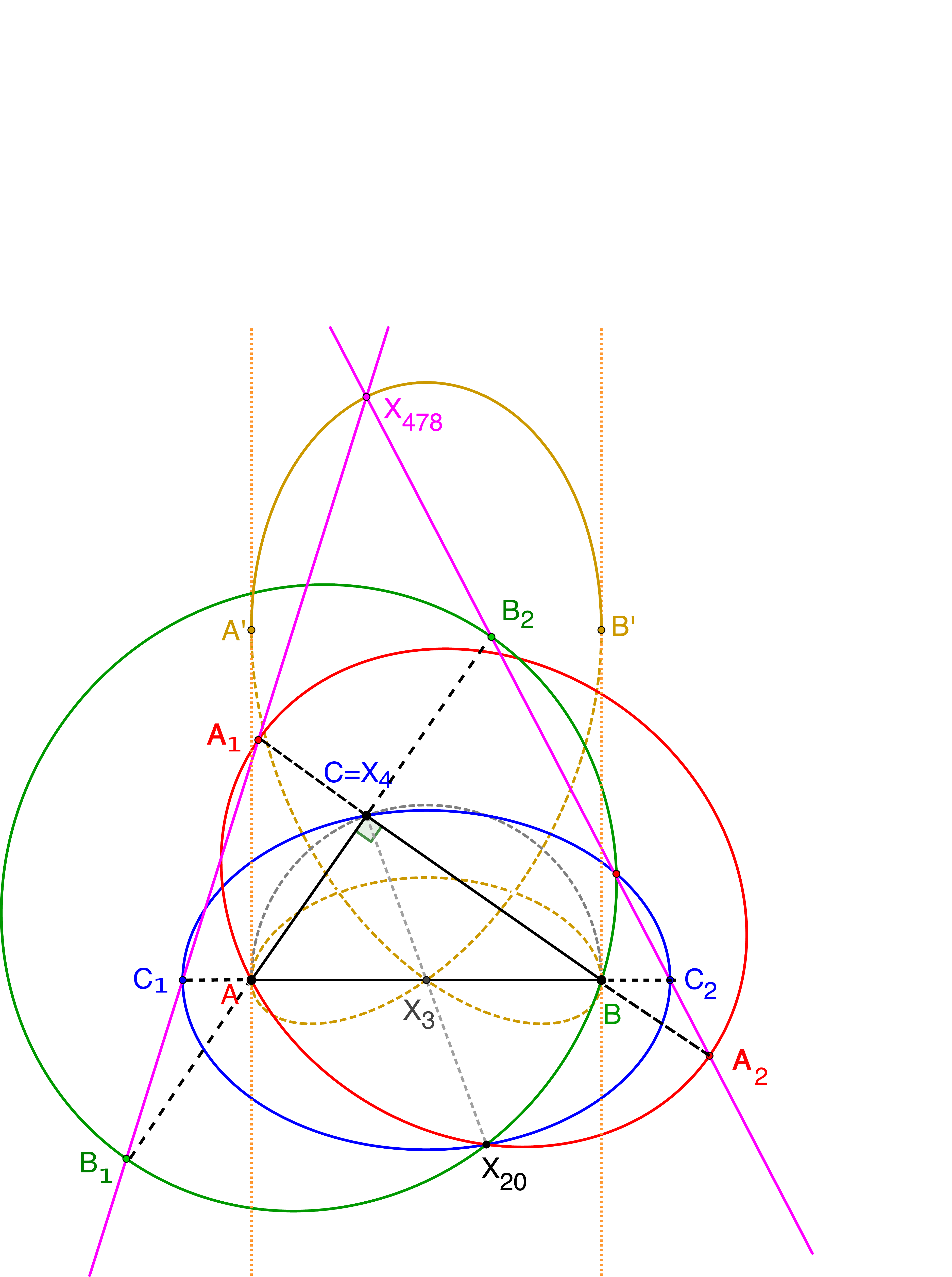}
    \caption{If $\triangle ABC$ is a right triangle, the Yiu conic $\Y$ (magenta) is degenerate, and the V-ellipses intersect at $X_{20}$. Furthermore, over all $C$ on a semicircle with $AB$ as a diameter, the locus of the center $X_{478}$ of $\Y$ is an arc (solid gold) of a quartic (dashed gold). The lines through $A,B$ perpendicular to $AB$ (dotted gold) are tangent to the locus at its endpoints $A'$, $B'$, and $|AA'|=|BB'|=|AB|$.}
    \label{fig:locus-v-triad}
\end{figure}

\subsection*{Degenerate six-point conic:} Still referring to \cref{fig:locus-v-triad}:

\begin{proposition} $\Y$ is degenerate
iff $\triangle ABC$ is a right triangle. 
\end{proposition}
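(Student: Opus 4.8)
The plan is to bypass the full conic-matrix discriminant and instead detect degeneracy combinatorially. A real conic through six points is degenerate precisely when it is a pair of distinct real lines $\ell_1\cup\ell_2$. Since the six vertices $A_1,A_2,B_1,B_2,C_1,C_2$ are distinct and lie two on each sideline of $\triangle ABC$, such a line-pair must meet each sideline once on $\ell_1$ and once on $\ell_2$: a single line cannot carry both points of one sideline without coinciding with that sideline, and that sideline then cannot accommodate the remaining four points. Hence $\Y$ is degenerate iff the six points split into two collinear triples, each holding exactly one point per sideline; conversely, as soon as one such triple is collinear, the conic through the six points contains that line as a component and is degenerate. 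So everything reduces to deciding when one of these triples is collinear.

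First I would record all six vertices in barycentrics. The statement gives $A_1=[0,a+b+c,a-b-c]$, $A_2=[0,a-b-c,a+b+c]$; applying the cyclic rule $(a,b,c)\mapsto(b,c,a)$, $[x,y,z]\mapsto[z,x,y]$ used throughout for $\E_b,\E_c$ produces
\[
B_1=[b-a-c,\,0,\,a+b+c],\qquad B_2=[a+b+c,\,0,\,b-a-c],
\]
\[
C_1=[a+b+c,\,c-a-b,\,0],\qquad C_2=[c-a-b,\,a+b+c,\,0].
\]
There are four complementary pairs of triples, each tested by the $3\times3$ determinant $\det[\,\cdot\,]$ of the rows of barycentric coordinates. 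The computation collapses: three of the four determinants factor as a nonzero multiple of a single Pythagorean form,
\[
\det[A_1,B_2,C_2]=2(a+b+c)(a^2-b^2-c^2),
\]
and cyclically $\det[A_1,B_2,C_1]=2(a+b+c)(b^2-a^2-c^2)$, $\det[A_1,B_1,C_2]=2(a+b+c)(c^2-a^2-b^2)$; moreover each complementary triple carries the same factor (e.g.\ $\det[A_2,B_1,C_1]=2(a+b+c)(a^2-b^2-c^2)$), so the two lines of a pair appear together. The remaining pair $\{A_1,B_1,C_1\}$, $\{A_2,B_2,C_2\}$ gives $\det[A_1,B_1,C_1]=(a+b+c)^3-(b+c-a)(c+a-b)(a+b-c)$, which is strictly positive for every genuine triangle because each factor of the product is positive (triangle inequality) and smaller than $a+b+c$; hence this pair is never collinear.

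With these factorizations the conclusion is immediate. Since $a+b+c>0$, a collinear triple exists iff one of $a^2-b^2-c^2$, $b^2-a^2-c^2$, $c^2-a^2-b^2$ vanishes, i.e.\ (by the law of cosines) iff one of the angles $A,B,C$ is right. Therefore $\Y$ is degenerate iff $\triangle ABC$ is a right triangle, and in that case $\Y$ is exactly the line-pair through the two collinear triples -- for a right angle at $A$ the lines $A_1B_2C_2$ and $A_2B_1C_1$, whose intersection is the center $X_{478}$ of $\Y$.

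The only delicate point -- the main obstacle -- is the reduction step: one must rule out that a degenerate $\Y$ is a double line, a single point, or has a sideline as a component, so that ``degenerate'' genuinely forces the clean $3+3$ split. This follows from the six points being distinct (from the coordinates, $A_1=A_2$ forces $a=0$, and points on different sidelines coincide only at a shared vertex, excluded since $a+b+c\neq0$) together with the counting observation above. If one prefers to avoid this reduction, the alternative is the direct algebraic route: restricting $\Y$ to each sideline forces the diagonal coefficients equal and pins down the off-diagonal ones, giving a conic matrix $M$ with $\det M=1-\delta^2-\epsilon^2-\zeta^2+2\delta\epsilon\zeta$; clearing denominators and factoring reproduces $(a^2-b^2-c^2)(b^2-a^2-c^2)(c^2-a^2-b^2)$ up to a factor nonzero on genuine triangles, yielding the same conclusion.
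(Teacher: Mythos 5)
Your proof is correct, and at its core it rests on the same geometric reduction the paper uses: a degenerate $\Y$ must split the six vertices into two collinear triples, one point per sideline, so everything comes down to a collinearity test plus Pythagoras. I checked your determinants: writing $s=a+b+c$, $u=a-b-c$, $v=b-a-c$, $w=c-a-b$, one gets $\det[A_1,B_2,C_2]=s(us+vw)=2(a+b+c)(a^2-b^2-c^2)$, the cyclic analogues as you state, and $\det[A_1,B_1,C_1]=s^3+uvw=(a+b+c)^3-(b+c-a)(c+a-b)(a+b-c)>0$; all are right. Where you genuinely diverge from the paper is in the test employed and in completeness. The paper runs Menelaus' theorem with the metric data $x=p-a$, $y=p-b$, $z=p-c$ on a \emph{single} triple (in a labeling where $CA_1=p-a$, so its triple is your $\{A_2,B_2,C_1\}$, whose determinant $2(a+b+c)(c^2-a^2-b^2)$ matches its conclusion $c^2=a^2+b^2$), and it leaves two things implicit: that degeneracy of $\Y$ is actually \emph{equivalent} to such a collinearity, and what happens with the remaining candidate triples. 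Your proof supplies exactly these missing pieces: the reduction step (no sideline component, no double line, no point-conic, hence a clean $3{+}3$ split; conversely a collinear triple forces a line component of $\Y$ by B\'ezout), and the inspection of all four complementary pairs, which shows that three of them encode right angles at $A$, $B$, $C$ while the fourth is never collinear. The paper's route buys brevity and a synthetic flavor; yours buys a logically airtight equivalence and, as a bonus, an explicit description of the degenerate $\Y$ as the pair of lines through complementary triples meeting at the center $X_{478}$.
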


\begin{proof}
  Let $A_1,A_2,B_1,B_2,C_1,C_2$  the intersection points of the ellipses, with 
the lines $BC$, $CA$, $AB$, as in \cref{fig:locus-v-triad}.
We shall prove  that
$A_1,B_1,C_1$ are collinear iff
 $\triangle{ABC}$ is right-angled.
To do so, by Menelaus' theorem, we need to check that
{\small\begin{equation}
\frac{A_1 C}{A_1 B}\cdot \frac{C_1 B}{C_1 A}
\cdot \frac{B_1 A}{B_1 C}=1.
\label{eq:menelau}
\end{equation}}
Let   $x=CA_1=BA_2,\;\;y=AB_1=CB_2,\;\; z=AC_1=BC_2$.

Since the $V-$ ellipses pass through one of triangle's vertices and have their foci into the other two,
$a+2x=b+c,\;\;\;b+2y=a+c,\;\;\;c+2z=a+b,$
 hence \[x=p-a,\;\; y=p-b,\;\; z=p-c,\] 
where $p=\frac{a+b+c}{2}$ is the semi-perimeter. 
Substituting this into \cref{eq:menelau}, we obtain:
\[\frac{x}{p}\cdot \frac{p}{z}\cdot \frac{y}{p}=1      \]
hence $(p-a)\cdot (p-b)=p \cdot (p-c)$, which is
equivalent to 
$c^2=a^2+b^2$. The result follows by Pythagoras' theorem.
\end{proof}

\noindent Assume, without loss of generality, that $A=(1/2,0)$ and $B=(-1/2,0)$.
\begin{proposition}
Over $C$ on the semicircle whose diameter is $AB$, $y>0$, the locus of the center of the degenerate $\Y$ is the arc of a quartic given by:
\[4(x^2 + y^2)^2 - 8y^3 - x^2 + 2y^2=0,\;\;\;y>1\]
The semicircle with $y<0$ produces a locus which is symmetric about the $x$-axis.

\end{proposition}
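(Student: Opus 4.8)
The plan is to exploit that, by the previous proposition, $\Y$ degenerates exactly when $\triangle ABC$ is right-angled, which is the situation here since $C$ lies on the semicircle with diameter $AB$ (so the right angle sits at $C$). A degenerate conic is a pair of real lines $\ell_1\ell_2=0$, and its center --- the point $X_{478}$, extended by continuity --- is the unique singular point $\ell_1\cap\ell_2$. The degeneration proof already exhibits one collinear triple among the six V-ellipse vertices; the complementary triple is collinear as well, and these two triples span $\ell_1$ and $\ell_2$. Thus the whole problem reduces to computing $\ell_1\cap\ell_2$ as a function of the triangle's shape and then eliminating that shape parameter.

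First I would fix coordinates. With $A=(1/2,0)$, $B=(-1/2,0)$ and $C$ on the semicircle, the right-angle condition at $C$ is $a^2+b^2=c^2=1$, and one computes $C=(a^2-\tfrac12,\,ab)$. Converting the six vertices from the barycentrics given above ($A_1=[0,a+b+c,a-b-c]$, the rest cyclically) to Cartesian coordinates, the two vertices of $\E_c$ land on the $x$-axis at $(\pm\tfrac{a+b}{2},0)$, and I would verify that $\ell_1$ passes through $(\tfrac{a+b}{2},0)$ with direction $(a-b-1,\,a+b+1)$ while $\ell_2$ passes through $(-\tfrac{a+b}{2},0)$ with direction $(a-b+1,\,a+b+1)$. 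Solving the resulting $2\times2$ system for the intersection $(X,Y)$ collapses to the compact form
\[
X=\frac{a^2-b^2}{2},\qquad Y=\frac{(a+b)(a+b+1)}{2}.
\]

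The final step is elimination. Setting $s=a+b$ and $d=a-b$, the constraint $a^2+b^2=1$ becomes $s^2+d^2=2$, while $X=ds/2$ and $Y=s(s+1)/2$. Substituting $d=2X/s$ gives $s^4-2s^2+4X^2=0$; reducing powers of $s$ through $s^2=2Y-s$ lets me solve for $s$ rationally in $X,Y$ and resubstitute into $s^2+s-2Y=0$. After clearing denominators everything cancels into $16(X^2+Y^2)^2-32Y^3+8Y^2-4X^2=0$, i.e. the stated quartic. Finally, since $a,b>0$ with $a^2+b^2=1$ force $s=a+b$ to range over $(1,\sqrt2\,]$ as $C$ sweeps the open semicircle, $Y=s(s+1)/2$ increases from the excluded value $1$ up to $1+\tfrac{1}{\sqrt2}$; hence the locus is precisely the arc $y>1$, with endpoints $(\pm\tfrac12,1)$ approached as $C\to A,B$, and the lower semicircle produces the $x$-axis reflection (the sign of the $y^3$ term flips).

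The main obstacle I anticipate is bookkeeping rather than any deep difficulty: one must convert and pair the six vertices with a \emph{consistent} labeling --- the two collinear triples ``mix'' the two focal families (each line carries one $\E_c$-vertex together with vertices of the other two ellipses), which is easy to get wrong --- and then carry the elimination through cleanly. Once the center is reduced to the rational form $\big(\tfrac{a^2-b^2}{2},\,\tfrac{(a+b)(a+b+1)}{2}\big)$, the remaining algebra is routine and the cancellation to the quartic is automatic.
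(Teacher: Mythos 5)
Your proposal is correct, and it supplies an actual derivation where the paper offers essentially none: the paper's proof of this proposition is the single sentence ``The claim was obtained via manipulation and simplification with a Computer Algebra System (CAS).'' Your route---realize the degenerate $\Y$ as the pair of lines through the two complementary collinear triples of vertices, compute the singular point (the center) as their intersection, then eliminate the shape parameter---checks out in every detail. With $A=(1/2,0)$, $B=(-1/2,0)$, $c=1$, $a^2+b^2=1$, one indeed has $C=(a^2-\tfrac12,ab)$; the two lines pass through $\bigl(\pm\tfrac{a+b}{2},0\bigr)$ with directions $(a-b\mp 1,\,a+b+1)$ (I verified both symbolically, e.g.\ the difference between the $\E_a$-vertex beyond $C$ and the point $\bigl(\tfrac{a+b}{2},0\bigr)$ equals $\tfrac{b}{2}(a-b-1,\,a+b+1)$); solving the $2\times 2$ system gives $t=\tfrac{a+b}{2}$ and hence the center $\bigl(\tfrac{a^2-b^2}{2},\,\tfrac{(a+b)(a+b+1)}{2}\bigr)$ exactly as you state (numerical check: $a=3/5$, $b=4/5$ gives $(-0.14,1.68)$, which satisfies the quartic). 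Your elimination with $s=a+b$, $d=a-b$, $s^2+d^2=2$ reproduces $16(x^2+y^2)^2-32y^3+8y^2-4x^2=0$, which is four times the stated quartic, and the range analysis ($s\in(1,\sqrt2\,]$ forces $y\in(1,\,1+\tfrac{1}{\sqrt 2}]$, with endpoints $(\pm\tfrac12,1)$ as $C\to A,B$) matches both the restriction $y>1$ and the caption's claim $|AA'|=|BB'|=|AB|$. One remark on the bookkeeping trap you flag: it is real, since the paper's own vertex labels are inconsistent between \cref{eq:tg1} and the Menelaus argument (under the convention of \cref{eq:tg1} the collinear triples are $\{A_1,B_1,C_2\}$ and $\{A_2,B_2,C_1\}$, not $\{A_1,B_1,C_1\}$); but since you work directly from the verified line data rather than the labels, this does not affect your proof. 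Compared to the paper's CAS-only argument, your approach buys a transparent, hand-checkable proof and the bonus closed form for the center as a rational function of $(a,b)$, from which the endpoint and tangency statements in \cref{fig:locus-v-triad} also follow.
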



\begin{proof}
The claim was obtained via manipulation and simplification with a Computer Algebra System (CAS).
\end{proof}

Referring to \cref{fig:vells-zones}:

\begin{proposition}
With $A$, $B$ fixed, the Yiu conic $\Y$ of $\triangle ABC$ is (i) degenerate if $C$ lies on the union of the circumcircle with the two lines tangent to it at $A$ and $B$; (ii) a parabola if $C$ lies on a curve whose barycentrics satisfy the following degree-8 implicit equation:
\begin{align*}
& a^8 + b^8+c^8 - 2(a^4 b^4 +a^4 c^4 + b^4 c^4) +\\
& 4 a b c ( a^5 + b^5 + c^5 - a^4 b - a b^4 -  a^4 c - a c^4 -  b c^4  - b^4 c + 
 a^3 b c +  a b^3 c +  a b c^3)= 0
\end{align*}

\end{proposition}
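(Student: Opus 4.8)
The plan is to treat the two claims separately. Part (i) is essentially a reformulation, via elementary geometry, of the earlier result that $\Y$ is degenerate iff $\triangle ABC$ is a right triangle. With $A,B$ fixed I would enumerate the three ways the triangle can be right: a right angle at $C$ puts $C$ on the circle with diameter $AB$ by Thales, and that circle is precisely the circumcircle of such a triangle since the hypotenuse $AB$ is then a diameter; a right angle at $A$ (resp.\ $B$) forces $CA\perp AB$ (resp.\ $CB\perp AB$), i.e.\ $C$ on the line through $A$ (resp.\ $B$) perpendicular to $AB$. Because $AB$ is a diameter of that circle, these two perpendiculars are exactly its tangents at $A$ and $B$. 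Hence the locus of $C$ giving a right triangle, equivalently a degenerate $\Y$, is the union of the circle with its tangents at $A,B$, which is (i).

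For part (ii) the strategy is to find the barycentric equation of $\Y$ and then impose the parabola criterion. To get the equation cheaply I would use that the six points lie in three pairs on the sidelines $x=0$, $y=0$, $z=0$. Writing $\Y$ as $\alpha x^2+\beta y^2+\gamma z^2+2\delta yz+2\epsilon zx+2\zeta xy=0$ and restricting to $x=0$, the binary quadratic $\beta y^2+2\delta yz+\gamma z^2$ must vanish at $A_1=[0,a+b+c,a-b-c]$ and $A_2=[0,a-b-c,a+b+c]$; since this pair is symmetric under $y\leftrightarrow z$ one gets $\beta=\gamma$ and $\delta/\beta=-(a^2+(b+c)^2)/(a^2-(b+c)^2)$. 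Repeating on the other two sidelines forces $\alpha=\beta=\gamma$ (normalize to $1$) and
\[\delta=-\tfrac{a^2+(b+c)^2}{a^2-(b+c)^2},\quad \epsilon=-\tfrac{b^2+(c+a)^2}{b^2-(c+a)^2},\quad \zeta=-\tfrac{c^2+(a+b)^2}{c^2-(a+b)^2}.\]
The consistency of the three sideline conditions (equivalently, that this single conic passes through all six points) is exactly the Carnot relation already invoked in \cref{prop:yiu3}.

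A conic is a parabola precisely when it is tangent to the line at infinity $x+y+z=0$. I would substitute $[t,s,-t-s]$ into $\Y$ to obtain the binary quadratic $Pt^2+Qts+Rs^2$ with $P=\alpha+\gamma-2\epsilon$, $R=\beta+\gamma-2\delta$, $Q=2(\gamma-\delta-\epsilon+\zeta)$, and set $Q^2-4PR=0$ (equivalently $(1,1,1)\,\mathrm{adj}(M)\,(1,1,1)^{\top}=0$). The key simplifying observation is the identity $1-\delta=2a^2/D_a$ with $D_a=a^2-(b+c)^2$, and cyclically; it collapses the condition to $(a^2/D_a+b^2/D_b-c^2/D_c)^2=4a^2b^2/(D_aD_b)$. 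Clearing denominators and using the factorization $D_a=(a+b+c)(a-b-c)$ lets one pull out an overall factor $(a+b+c)^4$, which may be discarded since $a+b+c\neq 0$; the surviving factor is a symmetric degree-$8$ polynomial that should coincide with the displayed octic. A quick equilateral check is reassuring: both the reduced expression and the stated polynomial evaluate to $-3a^8$ there.

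The main obstacle is the final symbolic step: confirming that the cleared, $(a+b+c)^4$-reduced expression expands to exactly the displayed symmetric degree-$8$ form, and — just as importantly — that no further extraneous factor (a power of $D_aD_bD_c$, or a right-triangle factor coming from the already-degenerate members) survives, so that the octic is precisely the parabola locus and not a proper superset of it. This is where a CAS is indispensable, both for the expansion and for the irreducibility/extraneous-factor bookkeeping. Finally I would record which of the two regions separated by this curve yields ellipses and which yields hyperbolas by tracking the sign of $Q^2-4PR$ as $C$ crosses it.
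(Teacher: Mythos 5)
Your proof is correct, and it is genuinely different from — indeed far more informative than — what the paper provides: the paper's entire proof of this proposition reads ``the claim was obtained via manipulation and simplification with a Computer Algebra System,'' so both the degeneracy locus and the octic are presented as black-box CAS output. Your part (i) supplies the missing synthetic argument (reduce to the earlier ``$\Y$ is degenerate iff $\triangle ABC$ is a right triangle'' proposition, then Thales for a right angle at $C$, plus the observation that the perpendiculars to $AB$ at $A$ and $B$ are exactly the tangents to the circle on diameter $AB$). Your part (ii) is also sound, and your reduction does land on the paper's polynomial: setting $X=a^2(c+a-b)(a+b-c)$, $Y=b^2(b+c-a)(a+b-c)$, $Z=c^2(b+c-a)(c+a-b)$, clearing denominators in your condition and discarding the factor $(a+b+c)^4$ leaves $X^2+Y^2+Z^2-2XY-2YZ-2ZX=0$, which expands to exactly the displayed octic (it agrees, e.g., at $(a,b,c)=(1,1,1),(3,2,2),(3,2,1),(4,3,2)$, and both sides are symmetric of degree $8$, so completing the identity check by CAS or by hand is routine); moreover no factor of $D_aD_bD_c$ survives, which settles your extraneous-factor worry. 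The one caveat worth recording is that the criterion $Q^2-4PR=0$ (tangency to the line at infinity) is also met by degenerate conics consisting of parallel lines, so where the octic meets the right-triangle locus the conic is degenerate rather than a true parabola; the paper glosses over this as well. What your route buys over the paper's: a human-verifiable derivation of the equation of $\Y$ itself (Vieta on the three sideline restrictions, using the symmetry of each vertex pair), an explanation of the octic's structure as a Heron-type discriminant in $X,Y,Z$, and, as you note, the sign of $Q^2-4PR$ then classifies the ellipse/hyperbola regions of \cref{fig:vells-zones} at no extra cost.
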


\begin{proof}
The claim was obtained via manipulation and simplification with a Computer Algebra System (CAS).
\end{proof}

\begin{figure}
    \centering
    \includegraphics[width=.7\textwidth,frame]{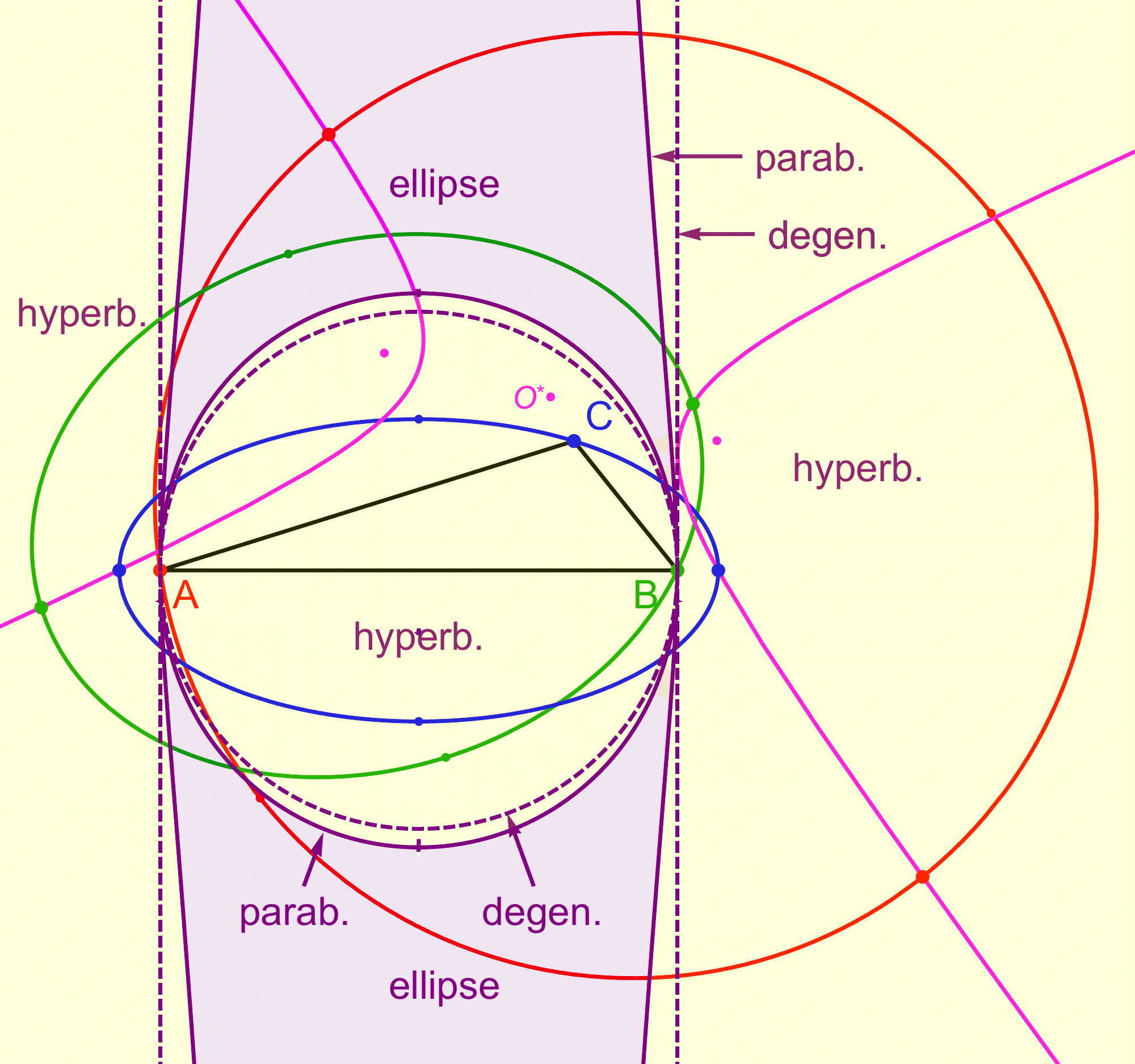}
    \caption{With $A,B$ fixed, the solid (resp. dashed) purple lines are the locus of 
    $C$
    such that the Yiu conic $\Y$ is a parabola (resp. degenerate). As indicated, in between said boundaries, the conic is either an ellipse or a hyperbola. A particular $\triangle ABC$ is shown with $C$ interior to the circumcircle, where $\Y$ is a hyperbola (magenta).}
    \label{fig:vells-zones}
\end{figure}

\subsection*{What about the co-vertices?}
It turns out that for $A,B$ fixed, there is a locus of $C$ such that the 6 co-vertices of the V-ellipses lie on a conic. Without loss of generality, let $A=(-1,0)$, and $B=(1,0)$. Referring to \cref{fig:vells-covertices-non-conic}:

\begin{proposition}
The locus of $C$ such that the 6 co-vertices of $\E_a$, $\E_b$, and $\E_c$ lie on a conic is given by:
{\small
\begin{align*}
 &\left( {x}^{6}-  (  2\,{y}^{2}+3  ) {x}^{4}- \ (  3\,{y}^
{4}-8\,{y}^{2}-3) {x}^{2}+11\,{y}^{4}-6\,{y}^{2}-1 \right) \rho_1\,\rho_2+\\
&\left( -2\,{x}^{6}-  (  22\,{y}^{2}-6  ) {x}^{4}-
  ( 14\,{y}^{4}-36\,{y}^{2}+6  ) {x}^{2}+6\,{y}^{6}+22\,{y}^
{4}-14\,{y}^{2}+2 \right)  \left( \rho_1+\rho_2 \right) +\\
&2\,x \left( {x}
^{6}+  (3\,{y}^{2}-3) {x}^{4}+  ( 3\,{y}^{4}-2\,{y}^{2
}+3) {x}^{2}+{y}^{6}-7\,{y}^{4}-{y}^{2}-1 \right)  \left( \rho_1-\rho_2 \right)+ \\
& 2(  {x}^{2}+ {y}^{2}-1)  \left( 5\,{x}
^{4}+  2({y}^{2}-5) {x}^{2}-3\,{y}^{4}-14\,{y}^{2}+5
 \right)   ( {x}^{2}-1) =0
\end{align*}}
where $\rho_1 = \sqrt{x^2 + y^2 + 2 x + 1}$, and $\rho_2 = \sqrt{x^2 + y^2 - 2 x + 1}$.
\end{proposition}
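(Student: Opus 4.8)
The plan is to place the triangle with $A=(-1,0)$, $B=(1,0)$, $C=(x,y)$, so that the sidelengths become $a=|BC|=\rho_2$, $b=|CA|=\rho_1$, $c=|AB|=2$; this already identifies the two radicals in the statement with the variable sides. By the first proposition of this section each V-ellipse is centered at the midpoint of its focal side, so I only need the minor semi-axis and the orientation to locate the six co-vertices. Using the focal description of an ellipse (the sum of the focal radii is the major axis, half the inter-focal distance is the linear eccentricity), the minor semi-axes satisfy $\beta_\bullet^2=p(p-\bullet)$, and in these coordinates they simplify to $\beta_a^2=1+x+\rho_1$, $\beta_b^2=1-x+\rho_2$, and $\beta_c^2=\tfrac12(x^2+y^2-1+\rho_1\rho_2)$. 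The two co-vertices of each ellipse are its center translated by $\pm\beta_\bullet$ along the unit normal to the corresponding side (the side direction rotated by a right angle); this produces six explicit points whose coordinates are rational in $x,y,\rho_1,\rho_2$ apart from the single surd factors $\beta_a,\beta_b,\beta_c$ (and the $1/\rho_1,1/\rho_2$ from the unit normals).

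Next I would impose that these six points lie on a common conic via the determinantal criterion: six points $(X_i,Y_i)$ lie on a conic iff
\[\Delta=\det\big[\,X_i^2,\;X_iY_i,\;Y_i^2,\;X_i,\;Y_i,\;1\,\big]_{i=1}^{6}=0,\]
so the sought locus is the zero set of $\Delta$ as a function of $x,y$. The structural observation that tames the radicals is a parity argument. Interchanging the two co-vertices of $\E_a$ simultaneously swaps two rows of the determinant and replaces $\beta_a$ by $-\beta_a$; since a row swap flips the sign of $\Delta$, the determinant is an odd function of $\beta_a$, and by the same reasoning odd in $\beta_b$ and in $\beta_c$. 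Hence $\Delta=\beta_a\beta_b\beta_c\,R$, where $R$ is even in each $\beta_\bullet$ and is therefore a rational expression in $\beta_a^2,\beta_b^2,\beta_c^2$ together with $1/\rho_1,1/\rho_2$. On the region where the three ellipses are nondegenerate we have $\beta_a\beta_b\beta_c\neq0$, so the locus is exactly $R=0$.

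Finally I would reduce $R=0$ to the displayed form. Substituting the expressions for $\beta_\bullet^2$, clearing the powers of $\rho_1,\rho_2$ from the denominators, and repeatedly applying $\rho_1^2=(x+1)^2+y^2$ and $\rho_2^2=(x-1)^2+y^2$ collapses every monomial to one that is at most linear in each of $\rho_1$ and $\rho_2$; thus $R$ reduces to $P_0\,\rho_1\rho_2+Q_1\,\rho_1+Q_2\,\rho_2+P_3$ with $P_0,Q_1,Q_2,P_3\in\R[x,y]$, and rewriting $Q_1\rho_1+Q_2\rho_2=\tfrac12(Q_1+Q_2)(\rho_1+\rho_2)+\tfrac12(Q_1-Q_2)(\rho_1-\rho_2)$ yields precisely the stated shape. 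A useful consistency check is the reflection $x\mapsto-x$, which swaps $A\leftrightarrow B$ and $\rho_1\leftrightarrow\rho_2$: it forces the coefficients of $\rho_1\rho_2$, of $\rho_1+\rho_2$, and the free term to be even in $x$ while the coefficient of $\rho_1-\rho_2$ is odd, exactly the parities visible in the statement.

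The conceptual content lies entirely in the factorization $\Delta=\beta_a\beta_b\beta_c\,R$, which guarantees that the nested radicals collapse and that the answer is linear in each of $\rho_1,\rho_2$. The genuine obstacle is the sheer size of the symbolic manipulation: expanding the $6\times6$ determinant with irrational co-vertex coordinates and simplifying $R$ down to the compact degree-six and degree-seven polynomials $P_0,\dots,P_3$ is only practical with a computer algebra system, which is presumably why the result is recorded as a CAS verification.
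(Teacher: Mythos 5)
Your proposal is correct and is, in substance, the paper's own approach: the paper's entire proof reads ``Computer algebra system-based manipulation,'' and your outline (the normalization $A=(-1,0)$, $B=(1,0)$ so that $b=|CA|=\rho_1$, $a=|CB|=\rho_2$, the semi-axis formulas $\beta_a^2=1+x+\rho_1$, $\beta_b^2=1-x+\rho_2$, $\beta_c^2=\tfrac12(x^2+y^2-1+\rho_1\rho_2)$, all of which check out, and the $6\times 6$ determinantal conic criterion) is exactly the computation such a verification performs. The one structural ingredient you add beyond the paper — that swapping the two co-vertices of each ellipse makes the determinant odd in each $\beta_\bullet$, so $\Delta=\beta_a\beta_b\beta_c\,R$ with $R$ even in each $\beta_\bullet$, which after reduction modulo $\rho_1^2=(x+1)^2+y^2$ and $\rho_2^2=(x-1)^2+y^2$ forces the locus equation to be linear in each of $\rho_1,\rho_2$ — is a genuine improvement, since it explains a priori why the published equation has precisely the form $P_0\,\rho_1\rho_2+\tfrac12(Q_1+Q_2)(\rho_1+\rho_2)+\tfrac12(Q_1-Q_2)(\rho_1-\rho_2)+P_3=0$ with the observed parities in $x$.
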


\begin{proof} Computer algebra system-based manipulation.
\end{proof}

Notice that a full 8 branches of the locus converge on either $A$ or $B$. Also note that if one attempts to eliminate the square roots in the implicit, one obtains a degree-36 polynomial.

Examples of the 6-point co-vertex conic for different locations of $C$ on the above locus appear in \cref{fig:vells-covertices-quad}, suggesting that (i) this conic is always a hyperbola, and that (ii) depending on the branch of
the locus of $C$
is on, co-vertices are split as 3:3 or 5:1 along the two branches of the hyperbola.

\begin{figure}
    \centering
    \includegraphics[width=.8\textwidth,frame]{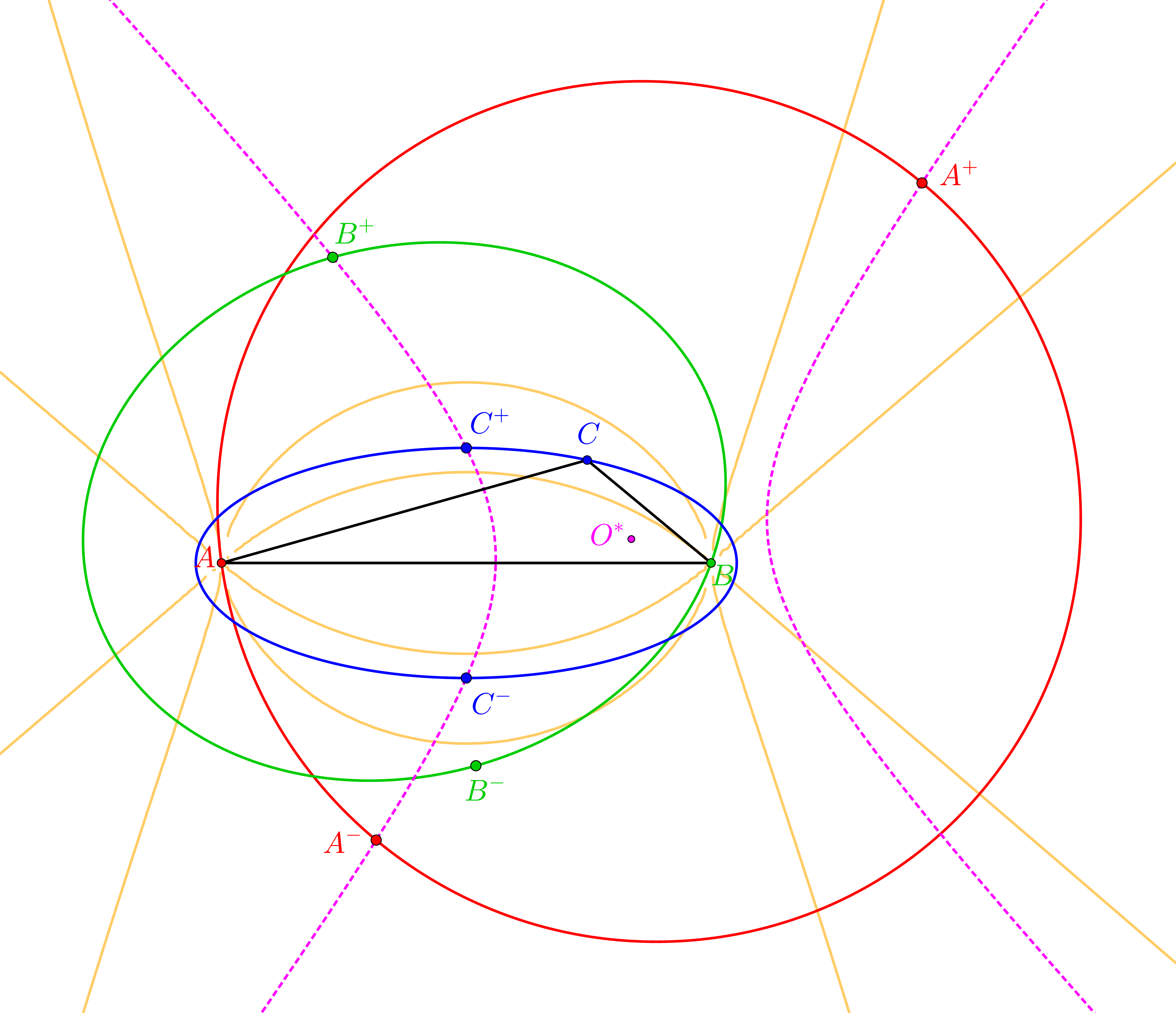}
    \caption{A $\triangle ABC$ is shown, as well as its 3 P-ellipses (red, green, blue) with co-vertices $A^+,A^-$, $B^+,B^-$, $C^+,C^-$. Also shown is the locus of $C$ (yellow) such that the co-vertices lie on a conic. Notice that for the triangle shown, $C$ does lie on said locus. For illustration, a hyperbola is shown (dashed magenta) which passes through 5 co-vertices but misses $B^-$.}
    \label{fig:vells-covertices-non-conic}
\end{figure}

\begin{figure}
\centering
\hrule
\begin{subfigure}[b]{0.48\textwidth}
\centering
 \includegraphics[width=\textwidth]{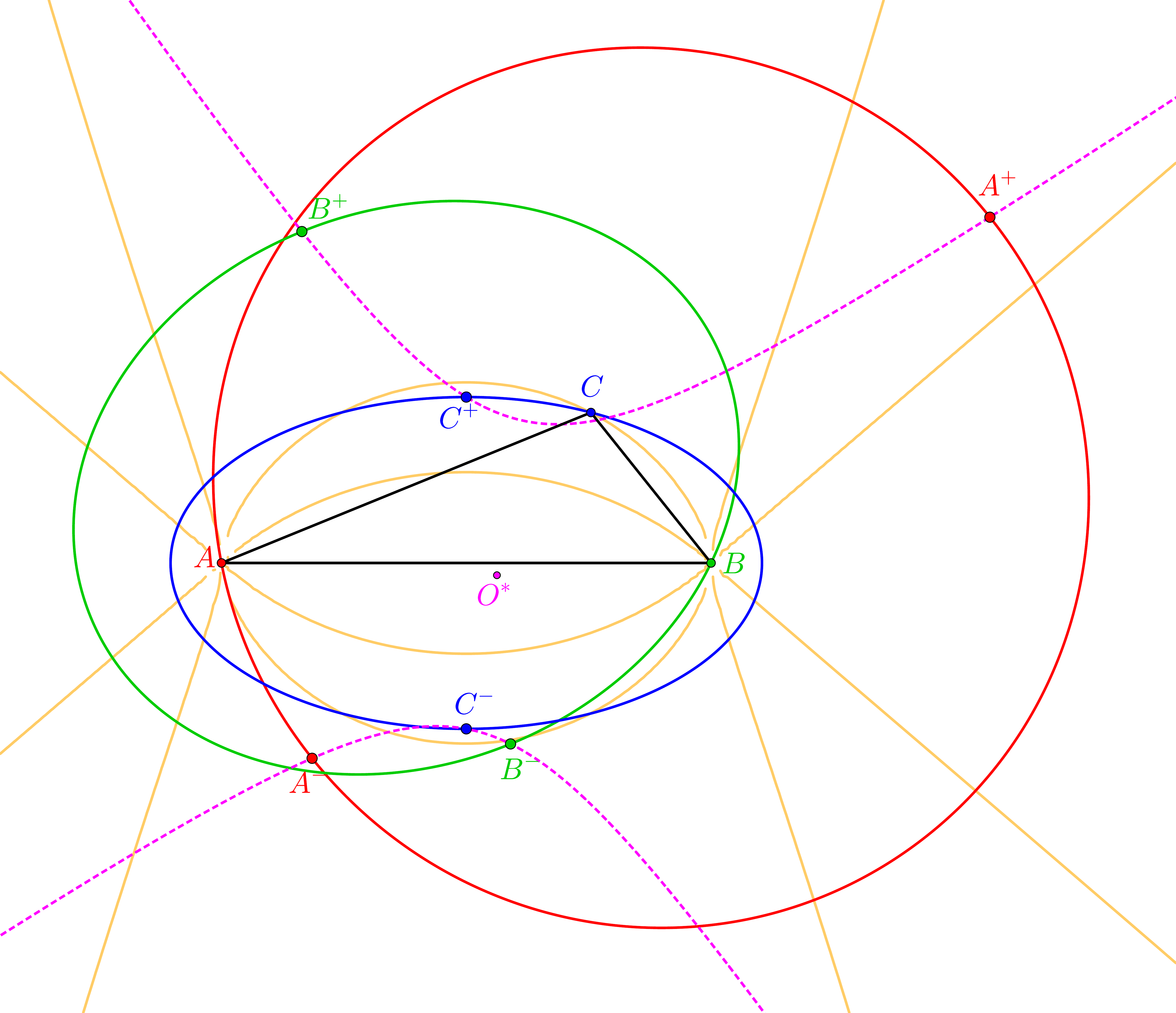}
\end{subfigure}
\rulesep
\begin{subfigure}[b]{0.48\textwidth}  
\centering 
\includegraphics[width=\textwidth]{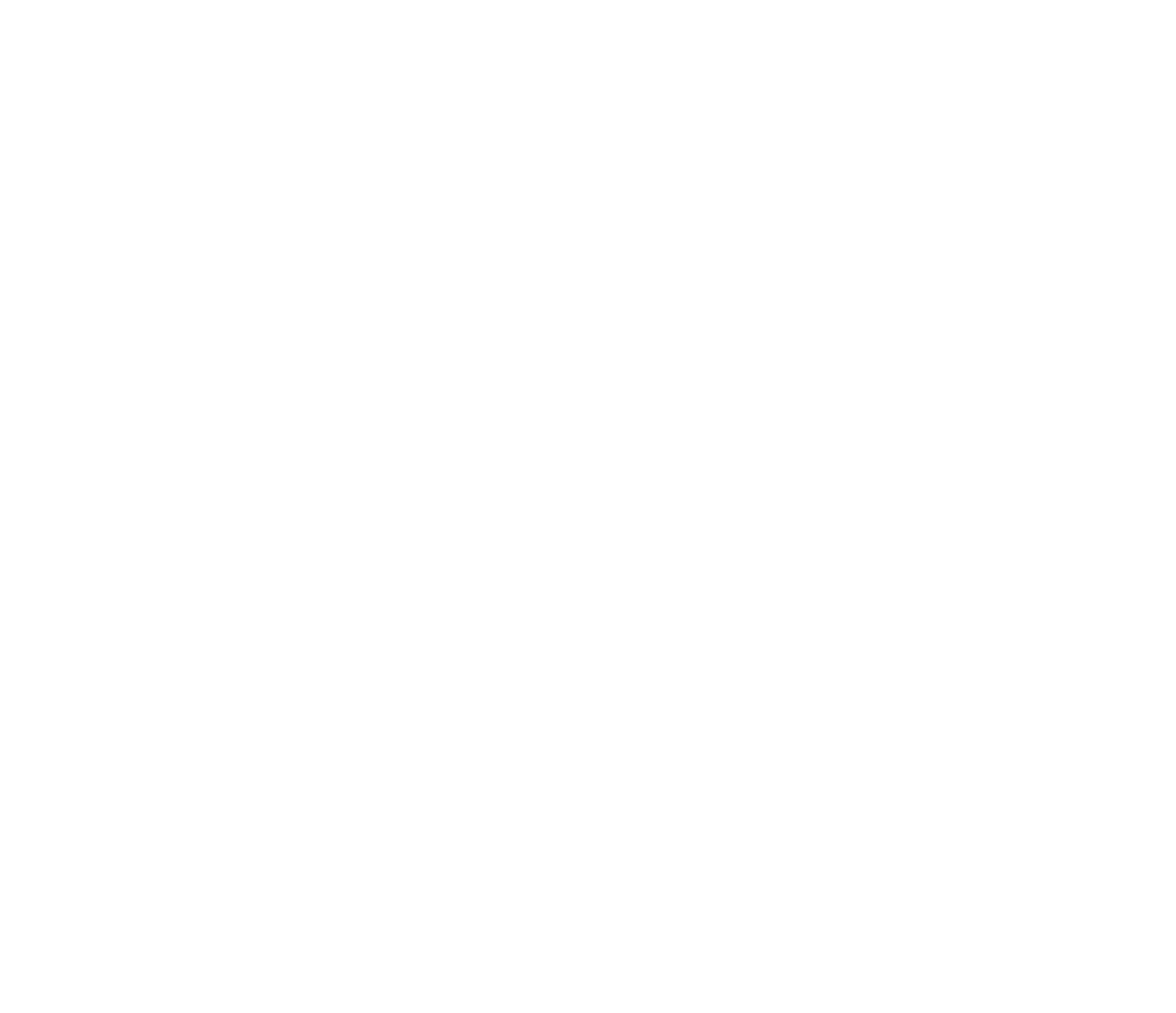}
\end{subfigure}
\hrule
\begin{subfigure}[b]{0.48\textwidth}   
\centering 
\includegraphics[width=\textwidth]{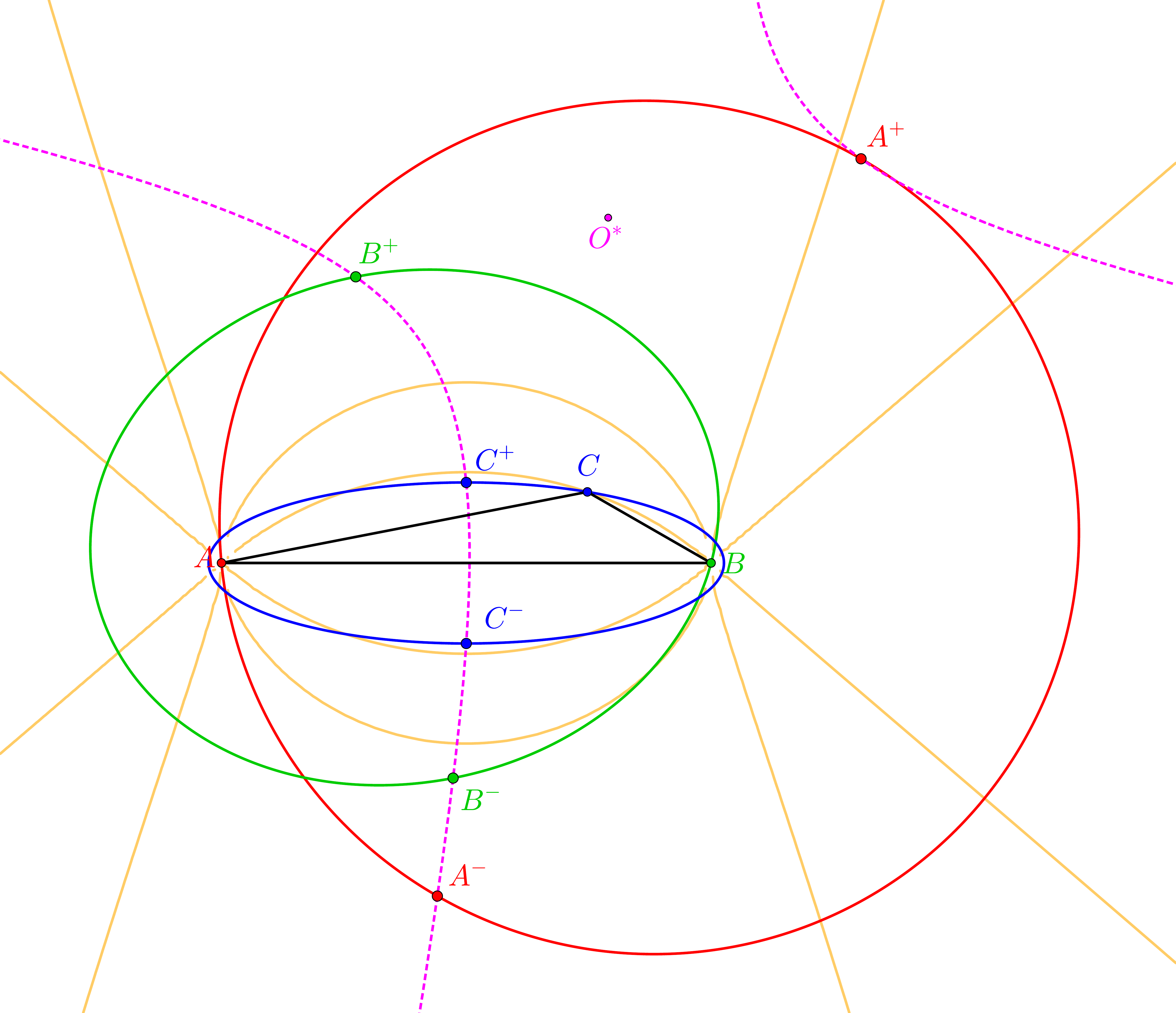}
\end{subfigure}
\rulesep
\begin{subfigure}[b]{0.48\textwidth}
\centering 
\includegraphics[width=\textwidth]{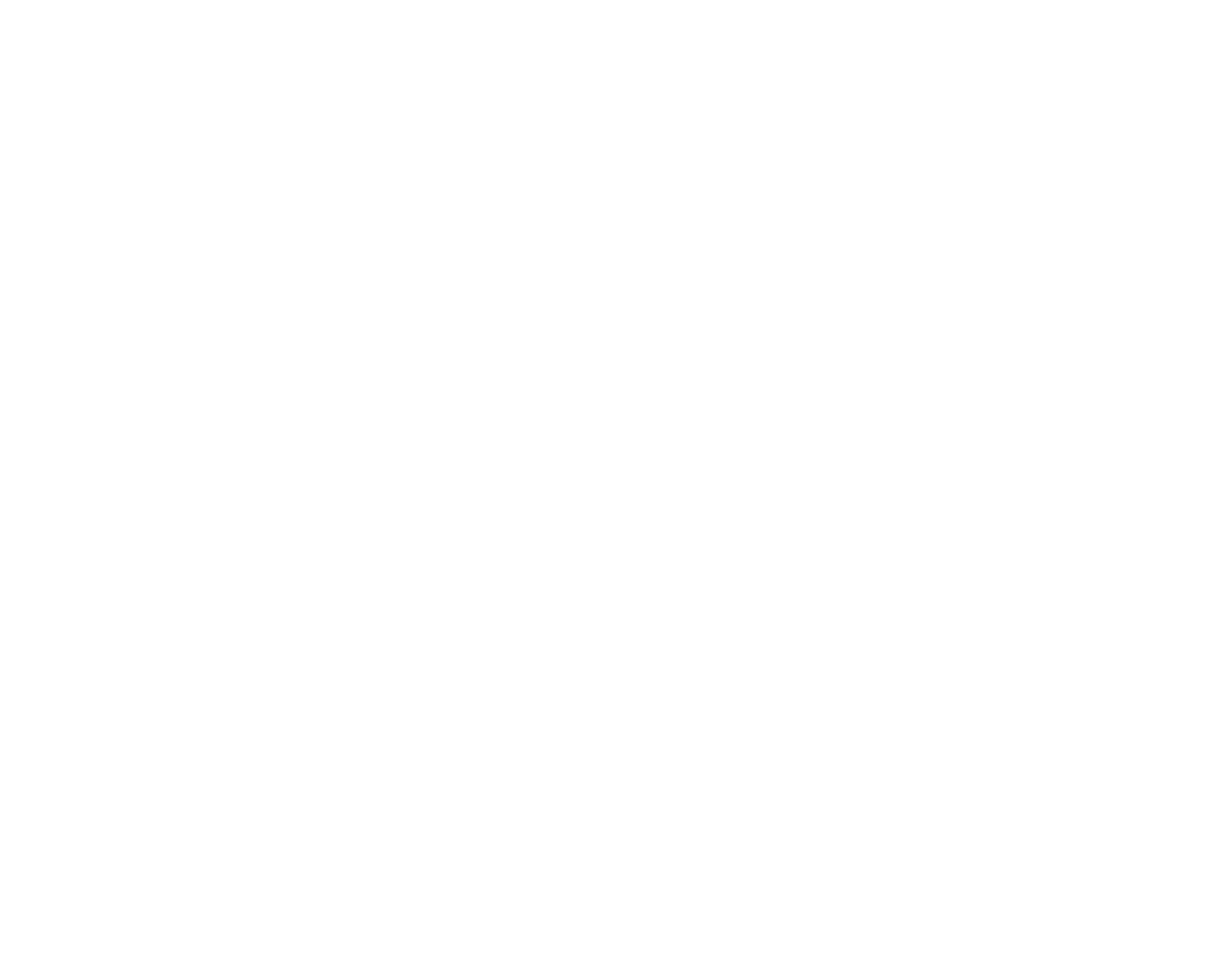}
\end{subfigure}
\hrule
\caption{Four choices for $C$ on the locus (yellow) such that the co-vertices $A^+,A^-$, $B^+,B^-$, and $C^+,C^-$ of V-ellipses (red, green, blue) of $\triangle ABC$ lie on a conic (dashed magenta). In the top two cases (resp. bottom two), the co-vertices are split 3x3 (resp. 5x1) on each branch of the conic.} 
\label{fig:vells-covertices-quad}
\end{figure}

\section{A triad of P-ellipses}
\label{sec:p-ell}
Referring to \cref{fig:ptriad}:

\begin{definition}[P-ellipses]
A triad of P-ellipses $\E_a^*,\E_b^*,\E_c^*$ have foci on $(B,C)$, $(C,A)$, $(A,B)$ and pass through a given point $P$.
\label{def:ptriad}
\end{definition}

Consider a triad of P-ellipses as in \cref{def:ptriad}. 

\begin{figure}
\centering
\includegraphics[width=.8\textwidth,frame]{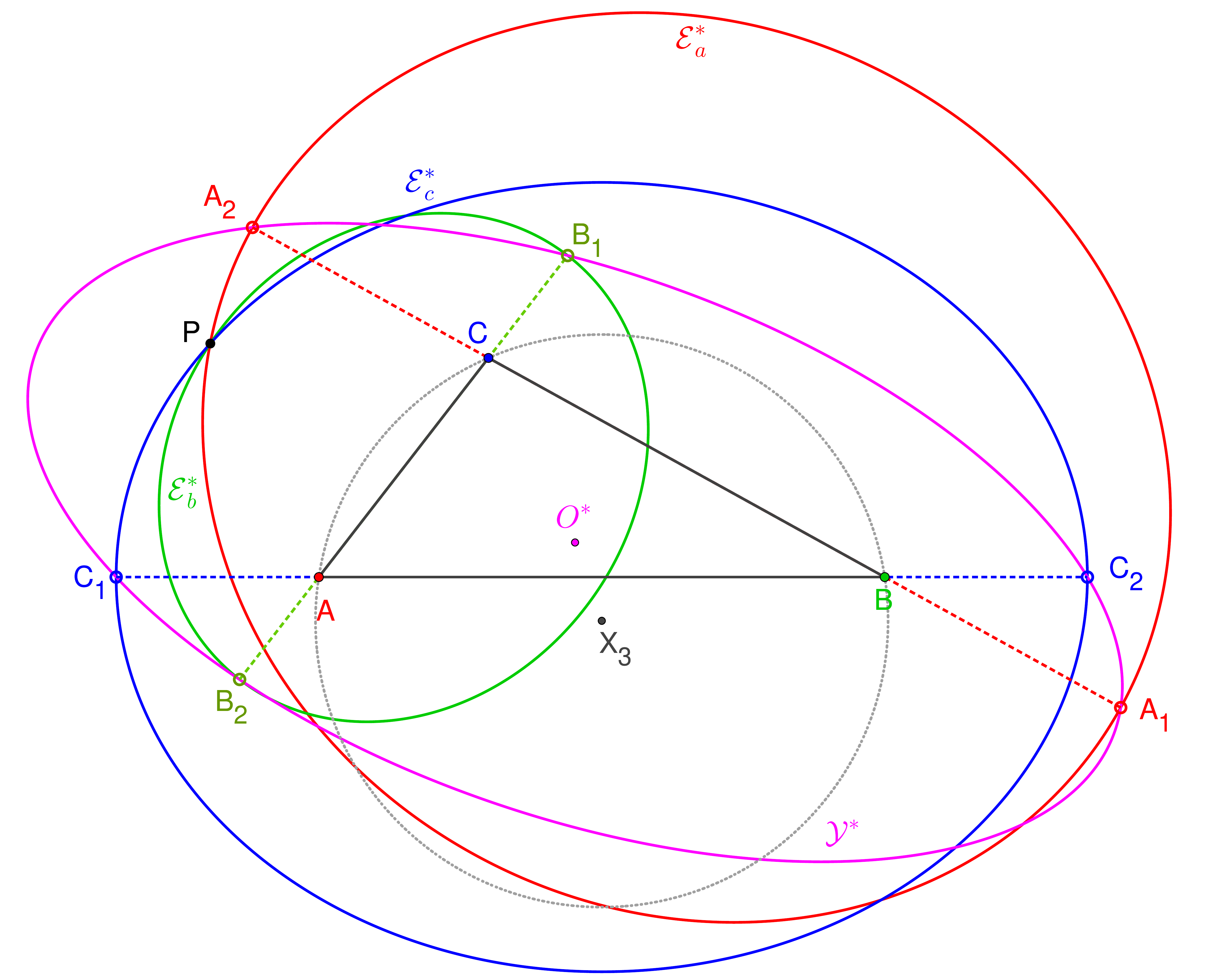}
\caption{The six vertices $A_1$, $A_2$, $B_1$, $B_2$, $C_1$, and $C_2$ of P-ellipses $\E_a^*,\E_b^*,\E_c^*$ are on a conic $\Y^*$. For reference, the circumcenter $X_3$ of $\triangle ABC$ and the center $O^*$ of $\Y^*$ are also shown.}
\label{fig:ptriad}
\end{figure}

\begin{theorem}
The six vertices of a triad of P-ellipses lie on a conic $\Y^*$.
\end{theorem}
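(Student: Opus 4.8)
The plan is to apply the converse of Carnot's theorem, exactly as in the proof of \cref{prop:yiu3}, but to exploit a symmetry that makes the computation essentially free. Write $u=|PA|$, $v=|PB|$, $w=|PC|$ and $a=|BC|$, $b=|CA|$, $c=|AB|$. The focal axis of $\E_a^*$ is the line $BC$, so its two vertices $A_1,A_2$ lie on side $BC$; similarly $B_1,B_2$ lie on $CA$ and $C_1,C_2$ on $AB$. Thus the six vertices are positioned two per sideline, which is precisely the configuration to which Carnot's criterion applies: they lie on a conic iff
\[
\frac{\overline{AC_1}}{\overline{BC_1}}\cdot\frac{\overline{AC_2}}{\overline{BC_2}}\cdot \frac{\overline{BA_1}}{\overline{CA_1}}\cdot \frac{\overline{BA_2}}{\overline{CA_2}}\cdot \frac{\overline{CB_1}}{\overline{AB_1}}\cdot \frac{\overline{CB_2}}{\overline{AB_2}}=1,
\]
with directed lengths taken along each sideline.

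The key structural fact is that the center of each P-ellipse is the midpoint of the corresponding side: the center of an ellipse is the midpoint of its foci, and the foci of $\E_a^*$ are the vertices $B,C$. Consequently the two vertices $A_1,A_2$ are symmetric about the midpoint $M_a$ of $BC$, at signed positions $\pm a_1$, where $a_1=(v+w)/2$ is the semi-major axis determined by $|PB|+|PC|=2a_1$. I would then compute the two Carnot factors contributed by side $BC$ jointly. Parametrizing $BC$ by signed distance from $M_a$, the vertices $B,C$ sit at $-a/2$ and $+a/2$ while $A_1,A_2$ sit at $\pm a_1$, so $\overline{BA_1}\cdot\overline{BA_2}=(a/2)^2-a_1^2=\overline{CA_1}\cdot\overline{CA_2}$. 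Hence
\[
\frac{\overline{BA_1}}{\overline{CA_1}}\cdot\frac{\overline{BA_2}}{\overline{CA_2}}=\frac{\overline{BA_1}\cdot\overline{BA_2}}{\overline{CA_1}\cdot\overline{CA_2}}=1,
\]
and the actual value of $a_1$ drops out entirely. The factor for side $BC$ is nothing but the ratio of the powers of $B$ and of $C$ with respect to the point-pair $\{A_1,A_2\}$, and these powers coincide precisely because $B$ and $C$ are symmetric about the pair's center $M_a$.

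Because the identical symmetry holds on each side, the products over $CA$ and $AB$ are likewise $1$, so the full Carnot product equals $1\cdot1\cdot1=1$ and the six vertices lie on a conic $\Y^*$. I expect the only genuine subtlety to be bookkeeping the signs of the directed lengths so that the per-side cancellation is rigorous rather than merely formal. It is worth remarking that this argument never uses that the three ellipses pass through the \emph{same} point $P$; it uses only that each is centered at a side midpoint, so the same reasoning simultaneously re-proves \cref{prop:yiu3}. As a fallback one could instead compute the six vertices in barycentric coordinates, the semi-axes being $(v+w)/2$, $(w+u)/2$, $(u+v)/2$, and verify directly that a single conic equation passes through all six; but the symmetry route avoids that calculation.
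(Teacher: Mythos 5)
Your proof is correct and follows essentially the same route as the paper's: both exploit the fact that each P-ellipse is centered at the midpoint of its focal side, so the vertex pairs are symmetric about the side midpoints, and then invoke the converse of Carnot's theorem, with the per-side ratios cancelling in pairs. Your version merely makes the cancellation explicit via signed coordinates (the power-of-a-point reading) and adds the accurate observation that the common point $P$ is never used, so the argument simultaneously covers \cref{prop:yiu3}; but the decomposition and the key lemma are identical to the paper's.
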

\begin{proof} 
Referring to 
\cref{fig:ptriad}, let $A_1,A_2$ (resp. $B_1,B_2$, and $C_1,C_2$) denote the vertices of $\E_a^*$ (resp. $\E_b^*$ and $\E_c^*$). Note that $A_1 A_2$ shares its midpoint with $BC$, and so on cyclically. Therefore:
$AC_2=BC_1$, $BA_2=A_1C$, and $CB_1=B_2A$. To finish the proof, we apply Carnot's theorem as in \cref{prop:yiu3}.
\end{proof}

Let $a,b,c$ denote the sidelengths of $\triangle ABC$. Let $\delta_a=|PB|+|PC|$, $\delta_b=|PC|+|PA|$,
$\delta_c=|PA|+|PB|$. Referring to \cref{fig:circ-anticev}(left).

\begin{figure}
\centering
\begin{subfigure}[c]{0.48\textwidth}
\centering
\includegraphics[trim=30 40 60 40,clip,width=\textwidth]{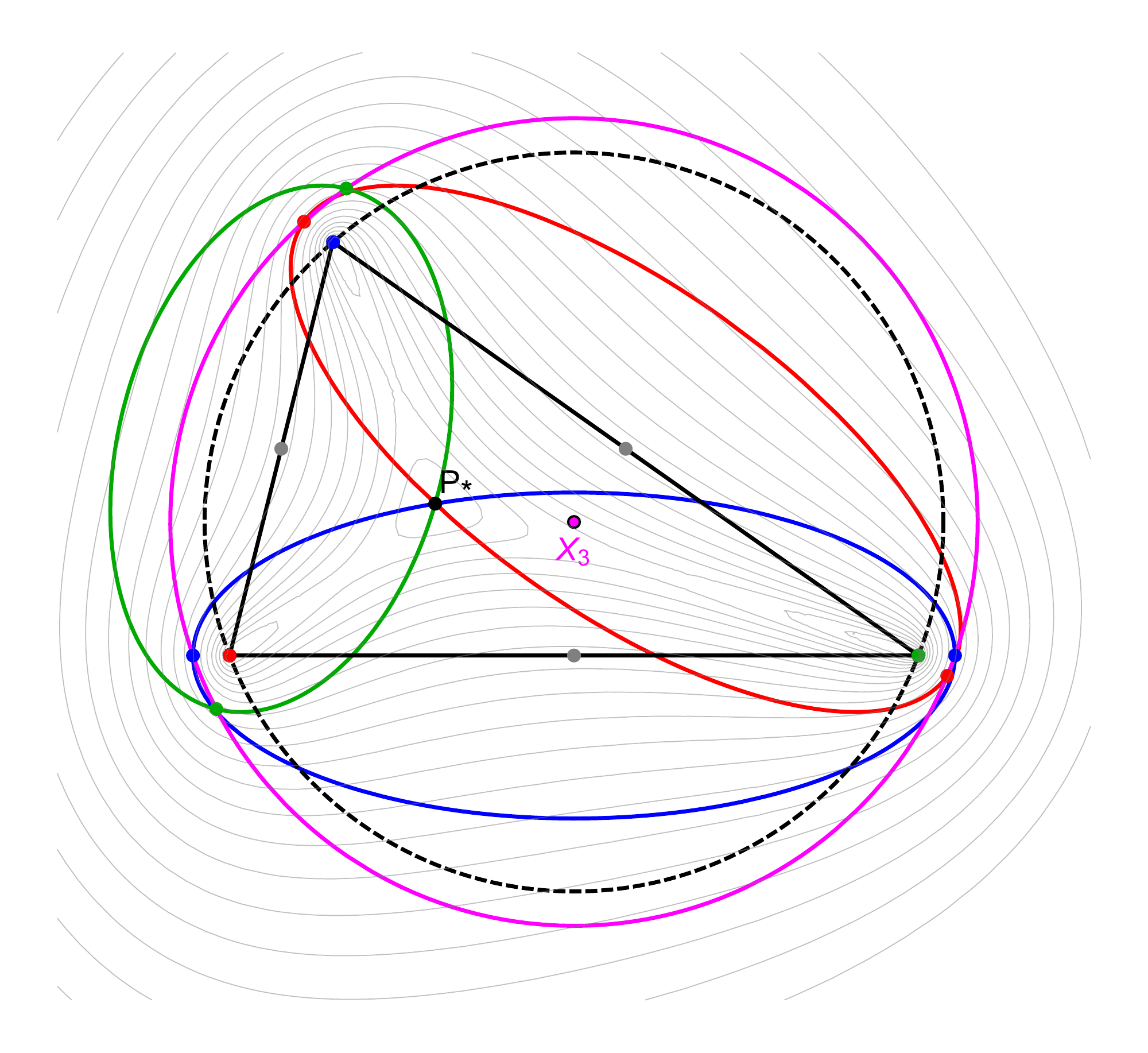}
\end{subfigure}
\rulesep
\begin{subfigure}[c]{0.48\textwidth}
\centering
\includegraphics[trim=0 40 0 0 ,clip,width=\textwidth]{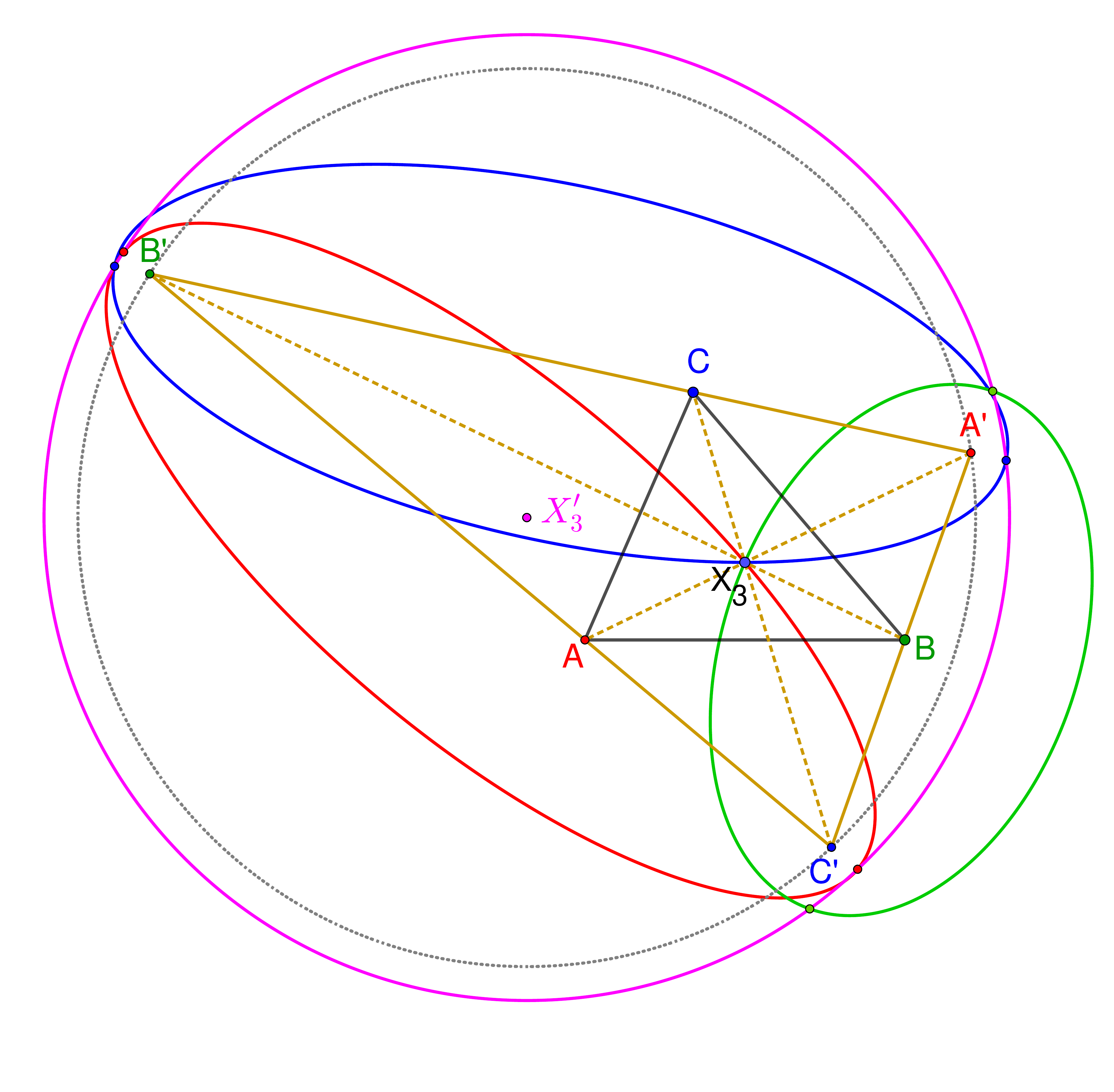}
\end{subfigure}
\caption{\textbf{Left:} Given a triangle, there is a unique $P^*$ such that the 6-point conic (magenta) of a triad of P-ellipses is a circle. The latter is concentric with the circumcircle (dashed black). Also shown are level curves of the functional in \cref{prop:pstar}: $P^*$ is its unique zero. \textbf{Right:} The vertices of a P-ellipse with foci on vertices of the $X_3$-anticevian $\triangle A'B'C'$ of $\triangle ABC$, and passing through the latter's $X_3$ lie on a circle (magenta). The latter is concentric with the circumcircle of $\triangle A'B'C$ (dashed black) at $X_3'$.}
\label{fig:circ-anticev}
\end{figure}


\begin{proposition}
There is a unique point $P^*$ such that $\Y^*$ is a circle given by:
\begin{align*}
&\left[(a^2 - \delta_a^2)(c^2 -b^2 + \delta_b^2 - \delta_c^2)\right]^2 +\\
&\left[(b^2 - \delta_b^2)(a^2 - c^2 + \delta_c^2 - \delta_a^2)\right]^2+\\
&\left[(c^2 - \delta_c^2)(b^2 -a^2 + \delta_a^2 - \delta_b^2)\right]^2  = 0
\end{align*}
Furthermore, $\Y^*$ is concentric with the circumcircle of $\triangle ABC$.
\label{prop:pstar}
\end{proposition}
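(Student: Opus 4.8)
The plan is to reduce the ``circle'' condition to a pair of metric relations among the ellipse axes, read off the concentricity for free, and only then confront existence and uniqueness of $P^*$.

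First I would locate the six vertices relative to the circumcenter $X_3$ (circumradius $R$). Each vertex $A_1,A_2$ of $\E_a^*$ lies on line $BC$ at distance $\delta_a/2$ (the semi-major axis) from the midpoint $M_a$ of $BC$, since $\E_a^*$ has foci $B,C$ and $|PB|+|PC|=\delta_a$. Because $X_3$ is equidistant from $B$ and $C$, the foot of the perpendicular from $X_3$ to $BC$ is exactly $M_a$, with $|X_3 M_a|=R\cos A$. Pythagoras then gives
\[|X_3 A_1|^2 = |X_3 A_2|^2 = R^2\cos^2 A + \frac{\delta_a^2}{4} = R^2 + \frac{\delta_a^2 - a^2}{4},\]
using $a=2R\sin A$, and cyclically for the $B$- and $C$-vertices. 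This one formula does most of the work. Concentricity then comes essentially for free: if $\Y^*$ is a circle, its center lies on the perpendicular bisector of the chord $A_1A_2$; but that chord lies along $BC$ with midpoint $M_a$, so its perpendicular bisector is the perpendicular bisector of $BC$. Repeating for the other two pairs forces the center onto all three perpendicular bisectors of the sides, i.e.\ onto $X_3$.

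Combined with the displayed distances, $\Y^*$ is a circle (then necessarily centered at $X_3$) if and only if all six distances agree, i.e.
\[\delta_a^2 - a^2 = \delta_b^2 - b^2 = \delta_c^2 - c^2.\]
Setting $u=\delta_a^2-a^2$, $v=\delta_b^2-b^2$, $w=\delta_c^2-c^2$, a short substitution shows the displayed functional equals $u^2(v-w)^2+v^2(w-u)^2+w^2(u-v)^2$. As a sum of real squares it vanishes iff each factor does; and for $P$ off the three sidelines the triangle inequality gives $u,v,w>0$, so the vanishing forces $u=v=w$. Thus the displayed equation characterizes exactly the circle condition, the extraneous solutions (some of $u,v,w$ zero) corresponding to $P$ at a vertex, where a member of the triad degenerates. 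Conversely $u=v=w$ makes the six vertices equidistant from $X_3$, and since the conic through five concyclic points is their circle, $\Y^*$ is indeed that circle.

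It remains to prove existence and uniqueness of $P^*$, which I expect to be the crux. For existence I would run a degree argument on $\Phi(P)=(u-v,\,v-w)$: expanding $\delta_a=|PB|+|PC|$ as $|P|\to\infty$ yields, to leading order,
\[\Phi(P)\sim -4|P|\left(\langle\hat P,\,B-A\rangle,\ \langle\hat P,\,C-B\rangle\right),\]
and since $B-A$ and $C-B$ are independent this has winding number $\pm1$ about the origin on a large circle, forcing a zero inside. Uniqueness is the genuine obstacle: the level curves $u=v$ and $v=w$ must meet in a single point. I would attempt this by establishing strict monotonicity (global injectivity) of $\Phi$ on the relevant region, leveraging convexity of the focal-sum functions $\delta_a,\delta_b,\delta_c$; failing a clean synthetic argument, I would rationalize the radicals in $u=v=w$ and certify with a CAS that the resulting polynomial system has a single real solution. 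The concentric-circle picture and the level curves of the functional $F\ge0$ strongly suggest that its unique zero is a global minimum, which is the statement I would ultimately verify.
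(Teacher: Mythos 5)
Your proposal is correct as far as it goes, and it takes a route the paper does not even attempt: the paper states this proposition with \emph{no proof at all} (the displayed equation was evidently obtained by CAS manipulation, like the neighboring results), and it explicitly relegates uniqueness of $P^*$ to the reader as Open Question 3. Your key lemma --- that each vertex of $\E_a^*$ satisfies $|X_3A_1|^2=|X_3A_2|^2=R^2\cos^2 A+\delta_a^2/4=R^2+(\delta_a^2-a^2)/4$ ($R$ the circumradius), because the vertices sit on $BC$ symmetrically about the midpoint $M_a$ and $X_3M_a\perp BC$ --- makes the whole statement transparent: concentricity follows since any circle through the three vertex pairs must lie on all three perpendicular bisectors of the sides, and the circle condition becomes $u=v=w$ with $u=\delta_a^2-a^2$, $v=\delta_b^2-b^2$, $w=\delta_c^2-c^2$. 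Your identification of the paper's functional with $u^2(v-w)^2+v^2(w-u)^2+w^2(u-v)^2$ is exactly right, and the positivity remark ($u,v,w>0$ away from the closed sides, by the triangle inequality) is precisely what makes the sum of squares equivalent to $u=v=w$. It also exposes a subtlety the paper glosses over: the equation as written is also satisfied at the three vertices of $\triangle ABC$ (where two of $u,v,w$ vanish), so ``unique'' can only mean unique among points where the triad is nondegenerate. Your winding-number argument for existence is sound (the leading term of $\Phi$ on a large circle is an invertible linear image of that circle, so the perturbation cannot kill the degree), and is again more than the paper offers.

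The one part of the statement you have not established is uniqueness, and you say so candidly. That is a genuine gap relative to the claim as stated --- but it is equally a gap in the paper, whose authors pose ``Prove $P^*$ is unique'' as an open problem; your proposal, with its proved characterization, concentricity, and existence, therefore goes strictly beyond the paper's own treatment. If you pursue uniqueness, be aware that convexity of $\delta_a,\delta_b,\delta_c$ alone will not obviously give injectivity of $\Phi=(u-v,\,v-w)$, since differences of squares of convex functions need not be monotone; the CAS certification you mention (rationalizing the radicals in $u=v=w$ and counting real solutions) is the pragmatic fallback, and is in the spirit of how the paper handles its other computational claims.
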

\noindent Level curves of the above function for a particular triangle are shown in \cref{fig:circ-anticev}(left). Interestingly, there is a straightforward way to construct a triangle whose $\Y^*$ is a circle.

\begin{definition}[anticevian triangle]
Given $\triangle ABC$ and a point $Q$, the {\em $Q$-anticevian} $\triangle A'B'C'$ is such that $\triangle ABC$ is its  $Q$-cevian \cite{mw}.
\end{definition}

\noindent Referring to \cref{fig:circ-anticev}(right), a first ``needle in a haystack'' find is:

\begin{proposition}
Given a reference triangle $\triangle ABC$, its $X_3$ is the $P^*$  
of its $X_3$-anticevian $\triangle A'B'C'$. Furthermore, (i) $\Y^*$ of the the latter is concentric with its circumcircle, and (ii) its center lies on the $X_4 X_6$ line of $\triangle ABC$. 
\label{prop:anticevian}
\end{proposition}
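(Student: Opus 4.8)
The plan is to first turn ``$\Y^*$ is a circle'' into a transparent metric condition. For any triad of P-ellipses of a triangle $\triangle A'B'C'$, the two vertices $A_1,A_2$ of $\E_a^*$ lie on line $B'C'$ and are symmetric about its midpoint $M_a$; hence the perpendicular bisector of the segment $A_1A_2$ is exactly the perpendicular bisector of $B'C'$. Therefore any circle through the six vertices is centred on all three side-bisectors, i.e.\ at the circumcenter $O'$ of $\triangle A'B'C'$. This already gives claim (i) as soon as $\Y^*$ is known to be a circle (and reproves the ``furthermore'' clause of \cref{prop:pstar}). With the centre forced to be $O'$ the off-axis term drops out, and the common squared distance from $O'$ to the two vertices of $\E_a^*$ is $|O'M_a|^2+(\delta_a/2)^2=(R'^2-a'^2/4)+\delta_a^2/4$, where $a',R'$ are the corresponding sidelength and circumradius of $\triangle A'B'C'$ and I used $|O'M_a|^2=R'^2-(a'/2)^2$. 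Equating over the three sides shows that ``$\Y^*$ is a circle'' is equivalent to
\[\delta_a^2-a'^2=\delta_b^2-b'^2=\delta_c^2-c'^2,\]
i.e.\ the three P-ellipses have equal semi-minor axes (since $\delta^2-(\text{focal length})^2=4\beta^2$). Substituting $u=\delta_a^2-a'^2$, $v=\delta_b^2-b'^2$, $w=\delta_c^2-c'^2$ turns the functional of \cref{prop:pstar} (applied to $\triangle A'B'C'$) into $u^2(v-w)^2+v^2(w-u)^2+w^2(u-v)^2$, whose real zero set is $\{u=v=w\}$ together with the three degenerate loci on which two of $u,v,w$ vanish; in the genuine P-ellipse regime only $u=v=w$ survives, confirming the reduction.

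It remains to verify those equalities when $\triangle A'B'C'$ is the $X_3$-anticevian of $\triangle ABC$ and the shared point is $P=X_3$. In barycentrics with respect to $\triangle ABC$ the circumcenter is $X_3=\bigl(a^2(b^2+c^2-a^2):b^2(c^2+a^2-b^2):c^2(a^2+b^2-c^2)\bigr)$, and the anticevian vertices are obtained by negating one coordinate each, e.g.\ $A'=\bigl(-a^2(b^2+c^2-a^2):b^2(c^2+a^2-b^2):c^2(a^2+b^2-c^2)\bigr)$ and cyclically. Placing $X_3$ at the origin, each side of the target reads
\[\delta_a^2-a'^2=2\bigl(|X_3B'|\,|X_3C'|+\overrightarrow{X_3B'}\cdot\overrightarrow{X_3C'}\bigr),\]
and cyclically, where $\overrightarrow{X_3B'}\cdot\overrightarrow{X_3C'}=(|X_3B'|^2+|X_3C'|^2-a'^2)/2$ is rational in $a,b,c$, as are the squared distances $|X_3A'|^2,\dots$ and the anticevian sidelengths $a'^2,\dots$, all read off from the barycentric distance formula. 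I would then confirm $\delta_a^2-a'^2=\delta_b^2-b'^2=\delta_c^2-c'^2$ with a CAS, after which \cref{prop:pstar} identifies this unique $P$ as $P^*$, giving the main statement and (i). The main obstacle is precisely that $X_3$ is \emph{not} equidistant from $A',B',C'$ (its circumcenter $X_3'$ is a different point, as (ii) records), so the products $|X_3B'|\,|X_3C'|$ do not match term by term: each difference $(\delta_a^2-a'^2)-(\delta_b^2-b'^2)$ carries a single surd that must be isolated and removed by one controlled squaring, and it is this radical bookkeeping that forces a symbolic computation.

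For (ii) the centre of $\Y^*$ is $O'=X_3'$, the circumcenter of the anticevian triangle. I would compute $X_3'$ as the circumcenter of the explicit $A',B',C'$ above, expressed in barycentrics of $\triangle ABC$, and then check collinearity with the orthocenter $X_4$ and the symmedian point $X_6=(a^2:b^2:c^2)$ by verifying that the $3\times3$ determinant with rows $X_3',X_4,X_6$ vanishes identically in $a,b,c$. Since $X_3'$ is a rational point and $X_4,X_6$ have standard rational barycentrics, this collinearity is a clean polynomial identity with no radicals, making it the most routine of the three parts.
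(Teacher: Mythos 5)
Your proposal is correct, and it is genuinely more informative than the paper's own proof, which consists of the single sentence that the phenomenon ``was discovered experimentally and then verified using CAS.'' Where the paper treats the whole proposition as a black-box symbolic check, you first extract a synthetic core: since the vertices of each P-ellipse of $\triangle A'B'C'$ are symmetric about the midpoint of the corresponding side, any circle through the six vertices must lie on all three side perpendicular bisectors, hence be centred at the circumcenter $O'$ — this proves claim (i) for free (and also supplies a proof of the ``furthermore'' clause of \cref{prop:pstar}, which the paper states without proof). Your reduction of the circle condition to equal semi-minor axes, $\delta_a^2-a'^2=\delta_b^2-b'^2=\delta_c^2-c'^2$, is right (the squared distance $R'^2-a'^2/4+\delta_a^2/4$ computation checks out), and your substitution correctly turns the functional of \cref{prop:pstar} into $u^2(v-w)^2+v^2(w-u)^2+w^2(u-v)^2$, whose only zeros with $u,v,w>0$ are $u=v=w$. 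What remains CAS-bound in your write-up — the identity $u=v=w$ at $P=X_3$ for the anticevian vertices $(-a^2S_A:b^2S_B:c^2S_C)$ etc., and the collinearity of $X_3'$, $X_4$, $X_6$ for (ii) — is no worse than what the paper itself leaves to the machine; indeed your framing makes the machine's task a sharply targeted identity rather than an unstructured verification. Two small caveats: the radical elimination is slightly heavier than you suggest, since each pairwise difference has the form $\sqrt{r_C}\left(\sqrt{r_B}-\sqrt{r_A}\right)=q$ with $q$ rational, requiring two squarings rather than one; and your identification of this $P$ with $P^*$ leans on the uniqueness asserted in \cref{prop:pstar}, which the paper itself lists among its open questions — a dependency your argument shares with the paper's.
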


\begin{proof}
This needle-in-a-haystack phenomenon was discovered experimentally and then verified using CAS.
\end{proof}
\noindent Barycentric coordinates for the circumcenter $X_3'$ of the $X_3$-anticevian appear in \cref{app:barys}.

While it can be shown that given a generic $\triangle A'B'C'$, there is always a triangle $\triangle ABC$ which is the former's $X_3$-cevian (map is invertible), we don't yet have a geometric construction for the latter. 

\subsection*{A degenerate 6-point conic:} As shown in  \cref{fig:pell-locus-equi}(left), a simple condition renders $\Y^*$ degenerate, namely:

\begin{proposition}
If $P$ is on the circumcircle of $\triangle{ABC}$, 
$\Y^*$ is degenerate (two straight lines).
\end{proposition}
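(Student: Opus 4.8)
The plan is to mirror the Menelaus argument used for the V-ellipse degeneracy, reducing the degeneracy of $\Y^*$ to the collinearity of three of its six vertices. First I would locate the vertices precisely: since $\E_a^*$ has foci $B,C$, its center is the midpoint $M_a$ of $BC$, its semi-major axis is $\delta_a/2$, and its linear eccentricity is $a/2$; hence its two vertices lie on line $BC$ at signed distances $(\delta_a\pm a)/2$ from the foci, and analogously for $\E_b^*,\E_c^*$. The key reduction is that, because the six vertices already lie on the conic $\Y^*$ by the preceding theorem, and a nondegenerate conic meets any line in at most two points, $\Y^*$ is degenerate as soon as three of the six vertices are collinear: the line through them then lies inside $\Y^*$, exhibiting it as a line pair.

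Next I would run Menelaus on $\triangle ABC$. Writing the signed ratio in which each sideline is cut, the factor contributed by the vertex of $\E_a^*$ lying beyond $B$ is $(a-\delta_a)/(a+\delta_a)$ and the one beyond $C$ is its reciprocal, and similarly on the other two sides. Testing the ``all-same-label'' triple yields $\prod(a+\delta_a)/(a-\delta_a)=-1$, which expands to $abc+a\delta_b\delta_c+b\delta_a\delta_c+c\delta_a\delta_b=0$ and is impossible for positive quantities, so that triple is never collinear. The mixed triple consisting of the $\E_a^*$-vertex beyond $B$ together with the $\E_b^*$- and $\E_c^*$-vertices beyond $A$ and $B$ is instead collinear precisely when
\[
abc + a\,\delta_b\delta_c - b\,\delta_a\delta_c - c\,\delta_a\delta_b = 0,
\]
its complementary triple obeying the reciprocal (hence the same) relation; two further conditions arise cyclically.

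It remains to show this relation holds whenever $P$ is on the circumcircle. Put $u=|PA|$, $v=|PB|$, $w=|PC|$, so $\delta_a=v+w$, $\delta_b=w+u$, $\delta_c=u+v$. When $P$ lies on the arc $BC$ not containing $A$, Ptolemy's theorem gives $au=bv+cw$ and the inscribed-angle theorem gives $\angle BPC=\pi-\angle BAC$, whence $a^2=v^2+w^2+2vw\cos A$. Substituting $u=(bv+cw)/a$ into the displayed relation reduces it, after clearing $a$, to
\[
bc\,(a^2-v^2-w^2) + vw\,(a^2-b^2-c^2) = 0,
\]
which is exactly the angle identity rewritten through the law of cosines; hence the relation holds and $\Y^*$ degenerates. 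The two remaining arcs yield, by the cyclic symmetry of the construction, the other two collinearity conditions, so the full circumcircle is covered. I expect the main obstacle to be the bookkeeping in the second step — selecting the correct mixed triple and matching its Menelaus condition to the correct arc — together with confirming that the elimination of $u$ collapses the condition exactly onto the concyclicity identity and not onto a strictly larger locus.
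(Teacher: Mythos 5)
Your proof is correct, but it takes a genuinely different route from the paper's, which settles this proposition in one line by having a CAS verify that the $3\times 3$ discriminant of the homogeneous equation of $\Y^*$ vanishes when $P$ is on the circumcircle. You instead transplant the Menelaus argument that the paper uses only for the V-ellipse (right-triangle) degeneracy: the vertices of $\E_a^*$ cut line $BC$ in signed ratios $(a-\delta_a)/(a+\delta_a)$ and its reciprocal; three collinear points force the six-point conic to split into two lines; the same-label triples are excluded because their collinearity condition reads $abc+a\delta_b\delta_c+b\delta_a\delta_c+c\delta_a\delta_b=0$, impossible for positive quantities; and the mixed triple's condition $abc+a\delta_b\delta_c-b\delta_a\delta_c-c\delta_a\delta_b=0$ reduces, after eliminating $u=|PA|$ by Ptolemy ($au=bv+cw$ for $P$ on the arc $BC$, with $v=|PB|$, $w=|PC|$), to $bc(a^2-v^2-w^2)+vw(a^2-b^2-c^2)=0$, which is exactly the inscribed-angle relation $\angle BPC=\pi-\angle BAC$ rewritten via the law of cosines. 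I checked these expansions; they are right, and cyclic relabeling does cover the other two arcs. Your approach buys interpretability that the discriminant computation cannot give: it identifies which triple of vertices becomes collinear on each arc, and, since the complementary triple has the reciprocal (hence also $-1$) Menelaus product, it exhibits both component lines and the 3:3 split of the six vertices explicitly. The CAS route, for its part, is far shorter and produces an exact algebraic criterion valid for every position of $P$, not just the sufficiency direction stated here. Two small points to tighten in your write-up: state the signed Menelaus convention explicitly (all six vertices lie outside the closed sides, so each ratio is negative and each triple product is automatically negative, which is why equating the unsigned product to $1$ is the correct test), and note that $P$ is taken on an open arc, so $\delta_a>a$, $\delta_b>b$, $\delta_c>c$ and all six vertices are distinct proper points (at $P\in\{A,B,C\}$ one of the ellipses collapses to a segment and the configuration degenerates further).
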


\begin{proof}
Via CAS, it can be verified that the 3x3 discriminant of the homogeneous equation for the conic vanishes.
\end{proof}


\noindent Referring to \cref{fig:pell-locus-equi}(left):

\begin{figure}
\centering
\begin{subfigure}[c]{0.48\textwidth}
\centering
\includegraphics[width=\textwidth]{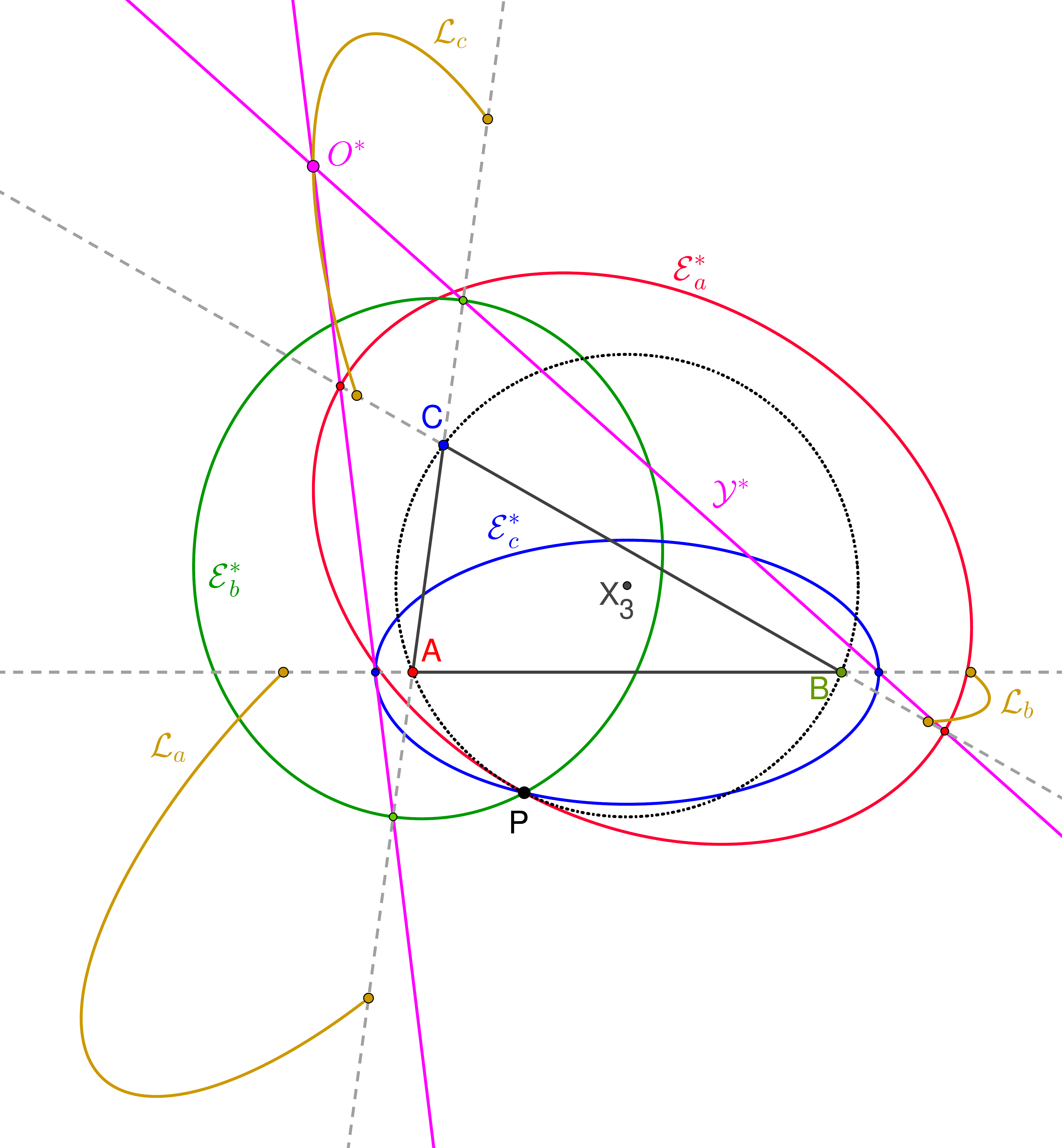}
\end{subfigure}
\rulesep
\begin{subfigure}[c]{0.48\textwidth}
\centering
\includegraphics[trim=0 0 0 0,clip,width=\textwidth]{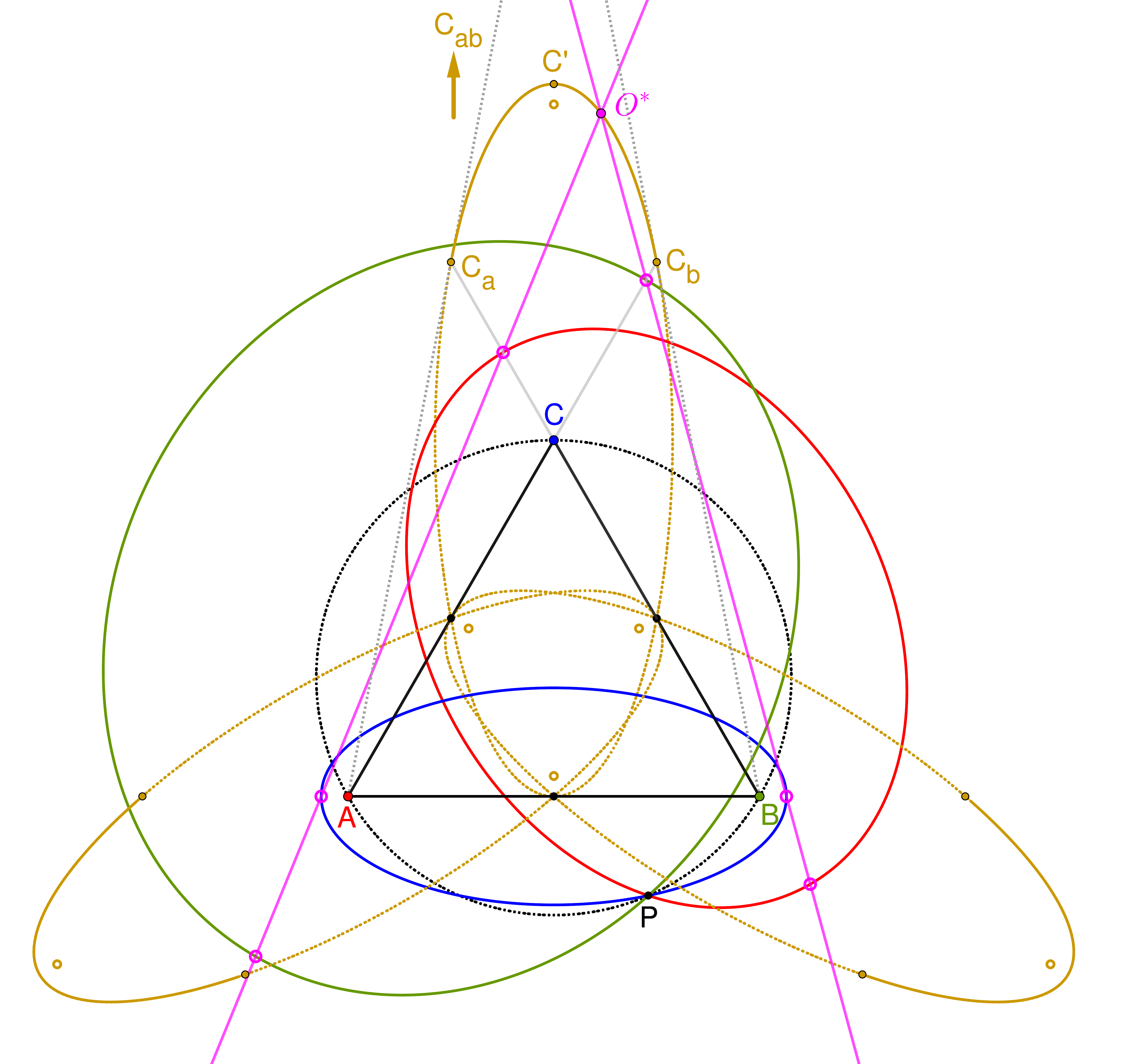}
\end{subfigure}
\caption{\textbf{Left:} If $P$ lies on the circumcircle of $\triangle ABC$, the Yiu conic $\Y^*$ (magenta) is degenerate. Over $P$ on the circumcircle, the locus (gold) of the center $O^*$ of the degenerate conic (magenta lines) is the union of three arcs of ellipse $\L_a,\L_b,\L_c$.\textbf{Right:} Over all $P$ on the circumcircle of an equilateral $\triangle ABC$, the locus of the center $O^*$ of the degenerate 6-pt conic (magenta) is the union of 3 elliptic arcs (solid gold) centered on $A,B,C$, whose major axes are the altitudes of $\triangle ABC$. The major (resp. minor) semi-axes measure $|AB|=\sqrt{3}/2$ (resp. $|AB|=\sqrt{3}/6$).}
\label{fig:pell-locus-equi}
\end{figure}



\begin{proposition}
Over $P$ on the circumcircle, the locus of the center $O^*$ of $\Y^*$ is the union of arcs of three distinct ellipses $\L_a,\L_b,\L_c$, all of which pass through the midpoints of $ABC$. The endpoints of $\L_a$ are one vertex of V-ellipse $\E_b$ and one of $\E_c$, and so on cyclically for the endpoints of $\L_b,\, \L_c$. 
\label{prop:3-arcs}
\end{proposition}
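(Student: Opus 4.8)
The plan is to pair a synthetic description of the degenerate conic's center with a parametric elimination in a CAS. Throughout, fix the circumcircle and write $M_a,M_b,M_c$ for the midpoints of $BC,CA,AB$. These are the centers of the three P-ellipses and are independent of $P$; only the semi-major axes $\alpha_a=\delta_a/2$, $\alpha_b=\delta_b/2$, $\alpha_c=\delta_c/2$ vary, so the six vertices are $M_a\pm\alpha_a\hat u_{BC}$, $M_b\pm\alpha_b\hat u_{CA}$, $M_c\pm\alpha_c\hat u_{AB}$: two points on each sideline, symmetric about the corresponding midpoint.

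First I would describe $O^*$ synthetically. For $P$ on the circumcircle the previous proposition makes $\Y^*$ split as a pair of lines $\ell_1\cup\ell_2$, and $O^*=\ell_1\cap\ell_2$. Each sideline carries exactly two of the six vertices, and since neither $\ell_i$ equals a sideline generically, each $\ell_i$ meets each sideline in exactly one of them; hence one vertex of every P-ellipse lies on $\ell_1$ and the other on $\ell_2$. Thus $\ell_1,\ell_2$ form a pair of transversals of $\triangle ABC$ whose intercepts on the three sidelines are symmetric, pair by pair, about $M_a,M_b,M_c$, and the correct matching is the one making both intercept-triples collinear, i.e. the Menelaus condition already exploited in the V-ellipse degeneracy argument. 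This is exactly where the hypothesis $P\in$ circumcircle is consumed.

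Next I would locate the three arcs and their endpoints. The vertices $A,B,C$ lie on the circumcircle and are the natural breakpoints: when $P=B$, the ellipse $\E_b^*$ (foci $C,A$, through $B$) is the V-ellipse $\E_b$, while $\E_a^*$ and $\E_c^*$ each pass through one of their foci and so collapse onto the segments $BC$ and $AB$. In this limit $A,C$ and both vertices of $\E_b$ are four points of the line $CA$, which therefore becomes a whole component of $\Y^*$; consequently $O^*$ lands on $CA$, and a short limit computation identifies it with a vertex of $\E_b$. Repeating at $P=C$ gives a vertex of $\E_c$. Hence, as $P$ runs over the circumcircle arc $BC$ opposite $A$, the trace of $O^*$ is an arc whose endpoints are one vertex of $\E_b$ and one of $\E_c$; calling it $\L_a$ and defining $\L_b,\L_c$ cyclically accounts both for there being exactly three arcs (the three arcs into which $A,B,C$ cut the circle) and for the stated endpoints.

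Finally, for the explicit curve I would normalize coordinates, parametrize $P=P(\theta)$ on the circumcircle, write $\alpha_a,\alpha_b,\alpha_c$ as functions of $\theta$, solve the linear system for the coefficients $[a\!:\!b\!:\!c\!:\!d\!:\!e\!:\!f]$ of $\Y^*$ through the six vertices, and form the center
\[O^*=\frac{1}{4ac-b^2}\bigl(be-2cd,\ bd-2ae\bigr).\]
Eliminating $\theta$ in a CAS should return a single conic equation per arc; I would then check that it is an ellipse and that it passes through all three midpoints $M_a,M_b,M_c$ by direct substitution. The main obstacle is twofold: the parameter elimination together with the proof that one irreducible ellipse (and not a higher-degree curve) is obtained on each arc, and the delicate limits at $P=A,B,C$, where $\Y^*$ degenerates further—two P-ellipses collapse to segments—so that $O^*$ must be recovered as a limit rather than evaluated from the formula. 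The genuinely surprising point, that each ellipse meets all three side-midpoints rather than only the two adjacent to its defining vertex, is an algebraic coincidence I would confirm symbolically, using the equilateral case (where $\L_a,\L_b,\L_c$ are congruent ellipses centered at $A,B,C$ with major axes along the medians) as a guide.
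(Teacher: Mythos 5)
Your plan is essentially the paper's own proof: the endpoints of each arc are identified by the same limiting argument (as $P\to B$ the ellipse $\E_b^*$ tends to the V-ellipse $\E_b$ while the other two P-ellipses collapse onto sidelines, forcing $O^*$ onto a vertex of $\E_b$, and cyclically), and the fact that each arc lies on an ellipse through the three side midpoints is established by CAS. The only difference is tactical: instead of eliminating the parameter and then checking the resulting curve (the step you rightly flag as the main obstacle), the paper fits the candidate conic through the five known points — the two endpoint vertices and the three midpoints — and verifies symbolically that the parametric expression for $O^*(P)$ satisfies its equation identically, which avoids the degree-inflation and extraneous-component risks of elimination.
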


\begin{proof}
Referring to 
\cref{fig:pell-locus}, that the endpoints of $\L_c$ are a vertex $A''$ of $\E_a$ and a vertex $B''$ of $\E_b$ can be seen from the fact that the limit of $\E_a^*$ (resp. $\E_b^*$) as $P$ approaches $A$ (resp. $B$) is $\E_a$ (resp. $\E_b$) and that the center $O^*$ of the degenerate $\Y^*$ will approach the intersection of $A A''$ and $B C$. The same argument applies for the endpoints of $\L_a,\L_b$, cyclically. To show that the locus of $O^*$ is the union of three elliptic arcs, we (i) restrict $P$ to a given ``third'' of the circumcircle, e.g., the arc between $A$ and $B$. Then (ii) we obtain, via a CAS, a (rather long) symbolic expression for the implicit function $f(x,y)$ representing the ellipse which passes through the 5 proposed points, namely, two vertices of V-ellipses and the midpoints of the sides of $\triangle ABC$. We then (iii) obtain a parametric expression for $O^*$ as a function of $P$ and plug it into $f(x,y)$, and notice via a CAS, that this simplifies to zero, independent of $P$. (iv) The same can be repeated cyclically for the other 3 portions of the circumcircle.
\end{proof}


\begin{figure}
\centering
\includegraphics[width=.8\textwidth,frame]{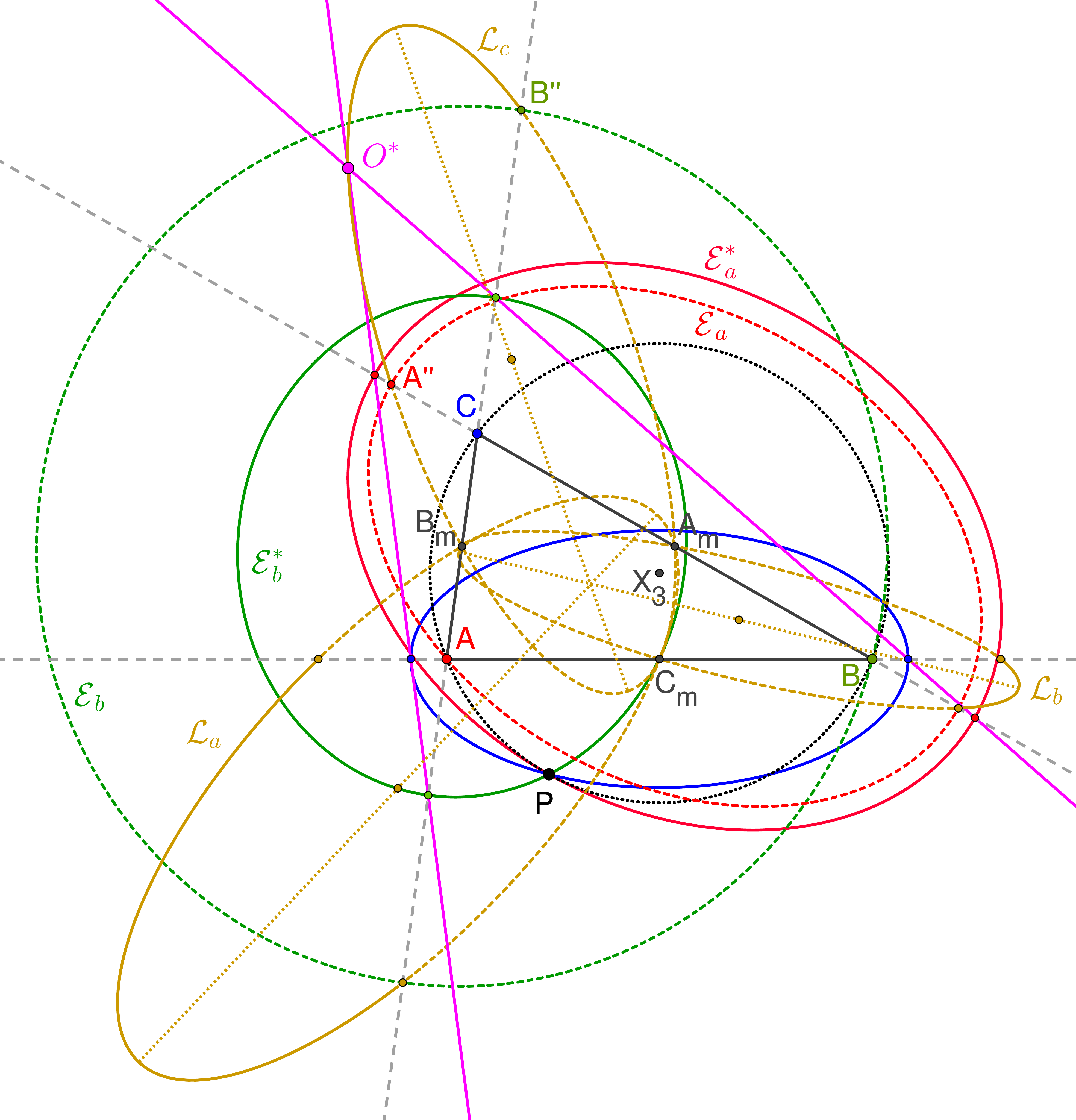}
\caption{Definitions used in \cref{prop:3-arcs}. The locus of  $O^*$ is the union of three arcs of ellipse (solid gold) $\L_a,\L_b,\L_c$, each of which passes through the 3 midpoints $A_m,B_m,C_m$ of $\triangle ABC$. The endpoints $A'',B''$ of $\L_c$ are vertices of V-ellipses $\E_a$ and $\E_b$ (dashed red, green). The major axes (dashed gold) of the three loci nearly concur, though not exactly.}
\label{fig:pell-locus}
\end{figure}


\noindent Referring to \cref{fig:pell-locus-equi}(right): 
 
\begin{corollary} Let $\triangle ABC$ be an equilateral of side 1.
Over $P$ on the circumcircle, the locus of the center $O^*$ of the degenerate $\Y^*$ is the union of arcs of three congruent ellipses with semi-axes $a=\sqrt{3}/2$ and $b=\sqrt{3}/6$, centered on $A,B,C$.
\end{corollary}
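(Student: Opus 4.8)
The plan is to deduce the corollary directly from \cref{prop:3-arcs}, exploiting the threefold symmetry of the equilateral triangle to cut the work down to identifying a single one of the three ellipses. Place the triangle with $A=(-1/2,0)$, $B=(1/2,0)$, $C=(0,\sqrt{3}/2)$, so that its centroid, which coincides with its circumcenter, is $O=(0,\sqrt{3}/6)$. The rotation $\varrho$ by $120^\circ$ about $O$ maps $\triangle ABC$ and its circumcircle to themselves, hence carries a circumcircle point $P$ to another circumcircle point and permutes the whole P-ellipse construction together with the center $O^*$ of the degenerate $\Y^*$; consequently $\varrho$ permutes the three arcs $\L_a,\L_b,\L_c$ cyclically. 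In particular the three limiting ellipses are congruent, so it suffices to determine one of them, say $\L_c$ (the arc traced as $P$ runs over the sub-arc $AB$), and to locate its center.

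By \cref{prop:3-arcs}, $\L_c$ is an ellipse through the three side-midpoints $A_m=(1/4,\sqrt{3}/4)$, $B_m=(-1/4,\sqrt{3}/4)$, $C_m=(0,0)$, whose two endpoints are a vertex of $\E_a$ and a vertex of $\E_b$. Its endpoints are what I would pin down next. Since $\E_a$ has center $A_m$ and semi-major axis $(|AB|+|AC|)/2=1$ (see \cref{prop:yiu3}), its two vertices lie on line $BC$ at distance $1$ from $A_m$; the one beyond $C$ is $A''=(-1/4,3\sqrt{3}/4)$, and reflecting across the $y$-axis (the altitude from $C$, an axis of symmetry of the whole configuration) gives the matching vertex $B''=(1/4,3\sqrt{3}/4)$ of $\E_b$. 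These five points are in general position (no three collinear), so they determine a unique conic, which must therefore be $\L_c$.

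It then remains to verify that the ellipse centered at the vertex $C=(0,\sqrt{3}/2)$, with axes along and across the altitude from $C$,
\[
\frac{x^2}{(\sqrt{3}/6)^2}+\frac{(y-\sqrt{3}/2)^2}{(\sqrt{3}/2)^2}=1,
\]
passes through all five of $A_m,B_m,C_m,A'',B''$; substituting each point collapses the left-hand side to $1$. By the uniqueness of the five-point conic, this ellipse is exactly $\L_c$, so its semi-axes are $\sqrt{3}/2$ (along the altitude) and $\sqrt{3}/6$, and its center is the vertex $C$. Applying $\varrho$ carries $\L_c$ to congruent ellipses centered at the remaining two vertices, which is precisely the claim.

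The only point that genuinely requires the computation, rather than being inherited from \cref{prop:3-arcs}, is the \emph{location} of the center: the general ellipses of \cref{prop:3-arcs} merely pass through the midpoints, and their being centered at the vertices is special to the equilateral case. This is exactly what the five-point check pins down, so the center never has to be found by a separate argument. The main care needed is therefore bookkeeping: among the two vertices of each V-ellipse, selecting the one (here the vertex beyond $C$, not the vertex $(3/4,-\sqrt{3}/4)$) that actually serves as an endpoint of $\L_c$, and confirming the five points are in general position so that the conic through them is unique.
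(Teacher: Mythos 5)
Your proof is correct, and it fills a real gap: the paper states this corollary with no proof at all, leaving it as a specialization of \cref{prop:3-arcs}, whose own justification is a long CAS verification. Your route --- reduce to a single arc via the $120^{\circ}$ rotational symmetry, collect five points that $\L_c$ must contain according to \cref{prop:3-arcs} (the three side midpoints plus its two endpoints), and exhibit the explicit ellipse $x^2/(\sqrt{3}/6)^2+(y-\sqrt{3}/2)^2/(\sqrt{3}/2)^2=1$ through them --- is elementary, needs only hand computation, and delivers the center, the semi-axes, and the congruence of the three arcs simultaneously through five-point uniqueness. The numerical details check out: the vertices of $\E_a$ do lie at distance $1$ from $A_m$ along line $BC$, the five points are in general position, and each satisfies the displayed equation.

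The one genuine soft spot is the step you yourself flag: \emph{which} vertex of $\E_a$ (and of $\E_b$) serves as an endpoint of $\L_c$ is asserted (``the one beyond $C$'') rather than derived, and it is load-bearing --- the three midpoints together with reflection symmetry across the altitude from $C$ still leave a pencil of symmetric conics, so without the correct endpoints the five-point argument cannot even begin. It is easy to close. By the reflection symmetry, the endpoint pair is either the two vertices beyond $C$ or the pair $(3/4,-\sqrt{3}/4)$, $(-3/4,-\sqrt{3}/4)$ beyond $B$ and $A$; the unique conic through the three midpoints and the latter pair is $x^2-\tfrac{5}{3}\,y^2+y/\sqrt{3}=0$, a hyperbola, contradicting the assertion of \cref{prop:3-arcs} that $\L_c$ is an ellipse, so the endpoints must be the pair beyond $C$. (Alternatively, a single sample point settles it: for $P=(0,-\sqrt{3}/6)$, the midpoint of arc $AB$, the degenerate conic is the line pair $y=\pm 3x+\sqrt{3}$, whence $O^*=(0,\sqrt{3})$, which lies on your ellipse and not on the hyperbola above.) With that one line added, the proof is complete.
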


 Let $C_a$ and $C_b$ denote the endpoints of the elliptic locus of $O^*$, over $P$ on the arc of the circumcircle below $AB$. Let $C'$ denote the locus' top vertex. Referring to \cref{fig:pell-locus-equi}(right), the following can be shown:

\begin{compactitem}
\item $C_a$ and $C_b$ are the reflections of the midpoints of $AC$ and $BC$ about $C$
\item lines $AC_a$ and $BC_b$ are tangent to the locus. Let $C_{ab}$ denote their intersection.
\item  $C'$ is the midpoint between $C$ and $C_{ab}$.
\item Therefore, $C_a C_b$ is the mid-base of $\triangle  A C_{ab} B$, therefore the latter is 3 times the area of $\triangle ABC$.
\end{compactitem}

\subsection*{Regions of conic type:}
It turns out the type of $\Y^*$ (ellipse, parabola, hyperbola, degenerate) depends on the position of $P$. The case of an equilateral $\triangle ABC$ is illustrated in \cref{fig:equi}.

\begin{remark}
If $\triangle ABC$ is an equilateral, it can be shown that the portions of the locus of $P$ such that $\Y^*$ is: (i)
degenerate (deltoid interior to $\triangle ABC$) are branches of 3 regular cubics; (ii) a parabola: branches of a 
degree-20 polynomial on $x,y$. 
\end{remark}

\begin{figure}
\centering
\includegraphics[trim=0 0 0 0,clip,width=.6\textwidth,frame]{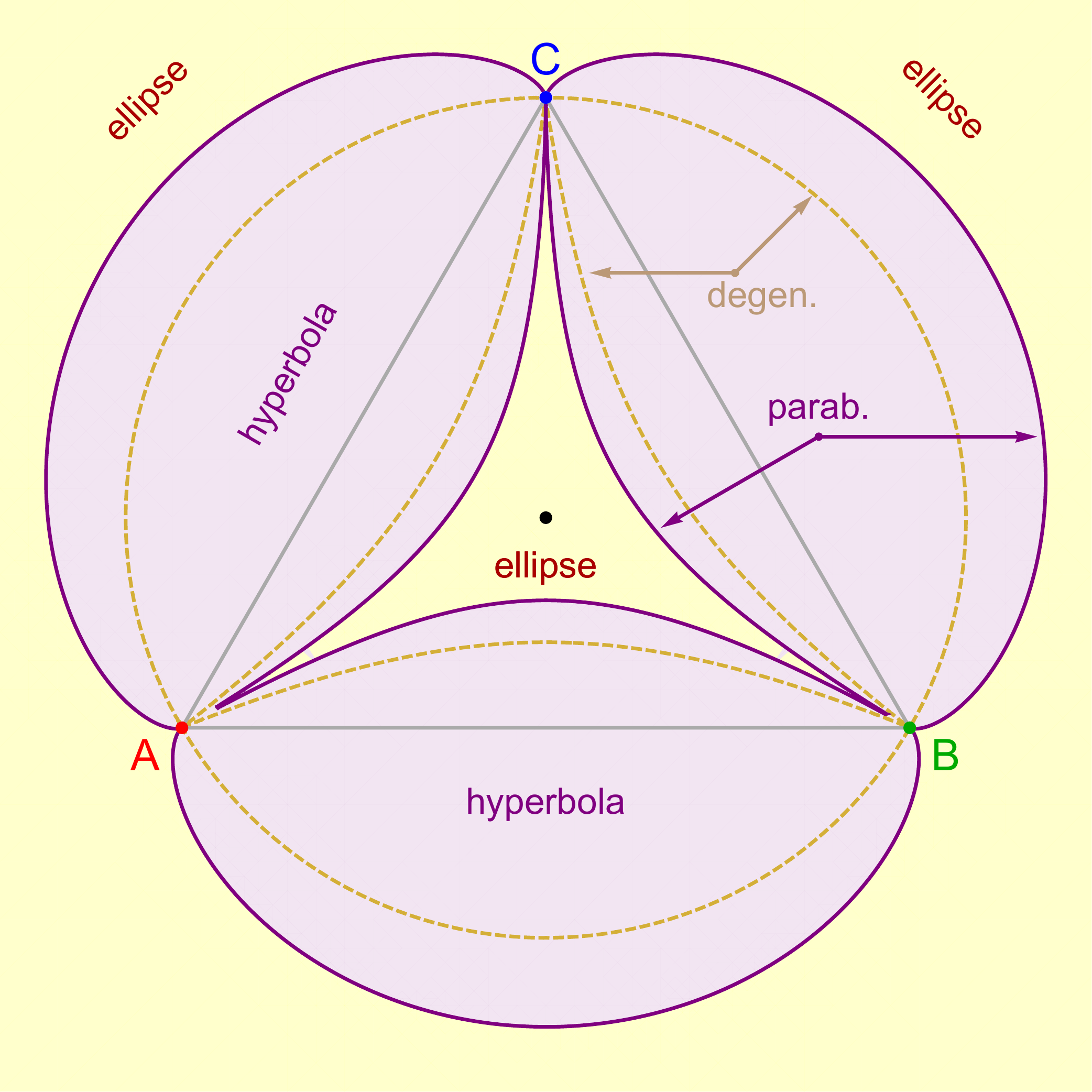}
\caption{For $\triangle ABC$ an equilateral, the figure illustrated regions of $P$ such that the $\Y^*$ conic is of a given type.}
\label{fig:equi}
\end{figure}

\begin{remark}
If $\Y^*$ is a hyperbola it can never be a rectangular one.
\end{remark}

\subsection*{What about the co-vertices?}

It turns out that for given $\triangle ABC$, there is a 1d locus for $P$ such that the 6 co-vertices lie on a conic. As before, let  Let $\delta_a=|PB|+|PC|$, $\delta_b=|PC|+|PA|$,
$\delta_c=|PA|+|PB|$.
Referring to \cref{fig:pells-covertices}:
\begin{proposition}
If $\triangle ABC$ is an equilateral, the locus for $P$ such that the 6 co-vertices lie on a conic $\Y^{\dagger}$ is given by:
\begin{align*} 
\delta_a^{2}\delta_b^{2}
+\delta_a^{2}\delta_c^{2}+\delta_b^{
2}\delta_c^{2}-8(\,\delta_a^{2}+\,\delta_b^{2}+\,\delta_c^{2})
+48=0
\end{align*}
Furthermore, the center $O^{\dagger}$ of $\Y^{\dagger}$ lies on the incircle of the equilateral.
\end{proposition}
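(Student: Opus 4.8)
The plan is to work in coordinates adapted to the equilateral, reduce the ``six co-vertices on a conic'' condition to a single polynomial relation in $\delta_a,\delta_b,\delta_c$, and then read the center off the resulting conic. I would place $\triangle ABC$ with circumcenter at the origin $O$ and circumradius $1$; then the side is $\sqrt3$, each side-midpoint lies at distance $1/2$ from $O$, and the incircle is $x^2+y^2=1/4$. (The constants $8$ and $48$ in the statement are exactly what this normalization produces, so it is the right one.) The P-ellipse $\E_a^*$ is centered at the midpoint $M_a$ of $BC$, has foci $B,C$ (focal half-distance $\sqrt3/2$) and semi-major axis $\delta_a/2$, hence semi-minor axis $u_a=\tfrac12\sqrt{\delta_a^2-3}$, and cyclically. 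The key structural fact, special to the equilateral, is that the minor axis of each P-ellipse lies along the median through the opposite vertex, hence along a line through $O$. Thus all six co-vertices lie on the three medians, which meet at $O$ at mutual angles $120^\circ$, and on the median of direction $\theta_k$ the two co-vertices sit at signed distances $\tfrac12\pm u_k$ from $O$.

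First I would exploit this line structure. Writing a general conic $Q(x,y)=Ax^2+Bxy+Cy^2+Dx+Ey+F$ and restricting it to a line through $O$ in direction $(\cos\theta,\sin\theta)$ turns $Q=0$ into a quadratic $a_k t^2+\ell_k t+F=0$ in the line parameter $t$, where $a_k$ is the quadratic form evaluated on $(\cos\theta_k,\sin\theta_k)$ and $\ell_k$ the linear form. Requiring that both co-vertices on that median be roots gives, by Vieta, the sum relation $\ell_k=-a_k$ and the product relation $F=a_k(\tfrac14-u_k^2)$ for $k\in\{a,b,c\}$. Using the three median directions $\{-90^\circ,30^\circ,150^\circ\}$, the three ``sum'' relations combine to force $C=-A$, $E=-A$, and $D=-B/2$ (incidentally revealing $\Y^{\dagger}$ to be a rectangular hyperbola).

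With $C=-A$ the ``product'' relations become three linear equations $F=a_k\,w_k$ in $(A,B,F)$, where $w_k=\tfrac14-u_k^2=1-\delta_k^2/4$. Eliminating $A,B,F$ (a $3\times3$ determinant) collapses, after the symmetric reduction, to the clean relation $w_aw_b+w_bw_c+w_cw_a=0$. Substituting $w_k=1-\delta_k^2/4$ and clearing the factor $16$ yields exactly $\delta_a^2\delta_b^2+\delta_b^2\delta_c^2+\delta_c^2\delta_a^2-8(\delta_a^2+\delta_b^2+\delta_c^2)+48=0$, the claimed locus. For the center I would solve $\nabla Q=0$ with the above coefficient relations, obtaining $O^{\dagger}=\bigl(\tfrac{2AB}{4A^2+B^2},\,\tfrac{B^2/2-2A^2}{4A^2+B^2}\bigr)$; a direct computation shows the numerator of $|O^{\dagger}|^2$ equals $\tfrac14(4A^2+B^2)^2$, so $|O^{\dagger}|^2=\tfrac14$ identically and $O^{\dagger}$ lies on the incircle.

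The step I expect to require the most care is not the algebra but its bookkeeping and the genericity behind it. I must verify that the six co-vertices are in the position that makes the fitted conic unique, so that ``the center'' is well defined; keep the Vieta signs and the median-direction conventions consistent; and restrict to $\delta_k^2>3$ so that the co-vertices are real. I would also check that the vanishing $3\times3$ determinant is both necessary and sufficient for the six points to lie on a common (nondegenerate) conic, ruling out spurious degenerate solutions.
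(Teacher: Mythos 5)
Your proof is correct, and it takes a genuinely different route from the paper's: the paper states this proposition with no derivation at all (like the neighboring results it was obtained by CAS manipulation; only the remark about the degree-10 curve follows it). I checked your key steps. With circumradius $1$ the three Vieta ``sum'' relations on the median directions $\{-90^\circ,30^\circ,150^\circ\}$ do force $C=-A$, $E=-A$, $D=-B/2$; the $3\times 3$ elimination determinant equals $\tfrac{\sqrt3}{4}\left(w_aw_b+w_bw_c+w_cw_a\right)$; and substituting $w_k=1-\delta_k^2/4$ yields exactly $\delta_a^2\delta_b^2+\delta_a^2\delta_c^2+\delta_b^2\delta_c^2-8(\delta_a^2+\delta_b^2+\delta_c^2)+48=0$, which confirms in passing that the paper's implicit normalization here is side $\sqrt3$ (circumradius $1$), not the side-$1$ normalization of the earlier equilateral corollary — a point the paper leaves unstated and your argument pins down. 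The center computation also checks: the numerator of $|O^{\dagger}|^2$ is $4A^2B^2+\left(B^2/2-2A^2\right)^2=\tfrac14\left(4A^2+B^2\right)^2$, so $|O^{\dagger}|=\tfrac12$, the inradius. Beyond replacing an opaque CAS verification with a human-checkable argument, your route buys three things: it shows $\Y^{\dagger}$ is always a rectangular hyperbola ($A+C=0$); it shows the incircle property is forced by the sum relations alone, i.e.\ before the determinant condition is even imposed; and it points toward the non-equilateral generalization asked for in the paper's open questions, since the three minor axes concur at the circumcenter for an arbitrary triangle (they are the perpendicular bisectors of the sides) — what is genuinely special to the equilateral is only the $120^\circ$ symmetry and the equal distances $\tfrac12$ of the side midpoints from $O$, so your phrase ``hence along a line through $O$'' slightly misattributes where the specialness lies, though this does not affect correctness. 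Finally, the genericity caveat you flag can be discharged cheaply: your two Vieta relations remain valid and sufficient even when some $a_k=0$, because a linear polynomial vanishing at the two distinct co-vertices forces $\ell_k=F=0$, so the relations are exactly equivalent to passage through the six points, and any nontrivial solution of the reduced system has $(A,B)\neq(0,0)$, making the conic (and, generically, its center) well defined.
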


Note: if one eliminates all square roots involved in computing $\delta_a,\delta_b,\delta_c$, the above becomes a degree-10 equation on $x,y$.

\begin{figure}
\centering
\includegraphics[width=.7\textwidth,frame]{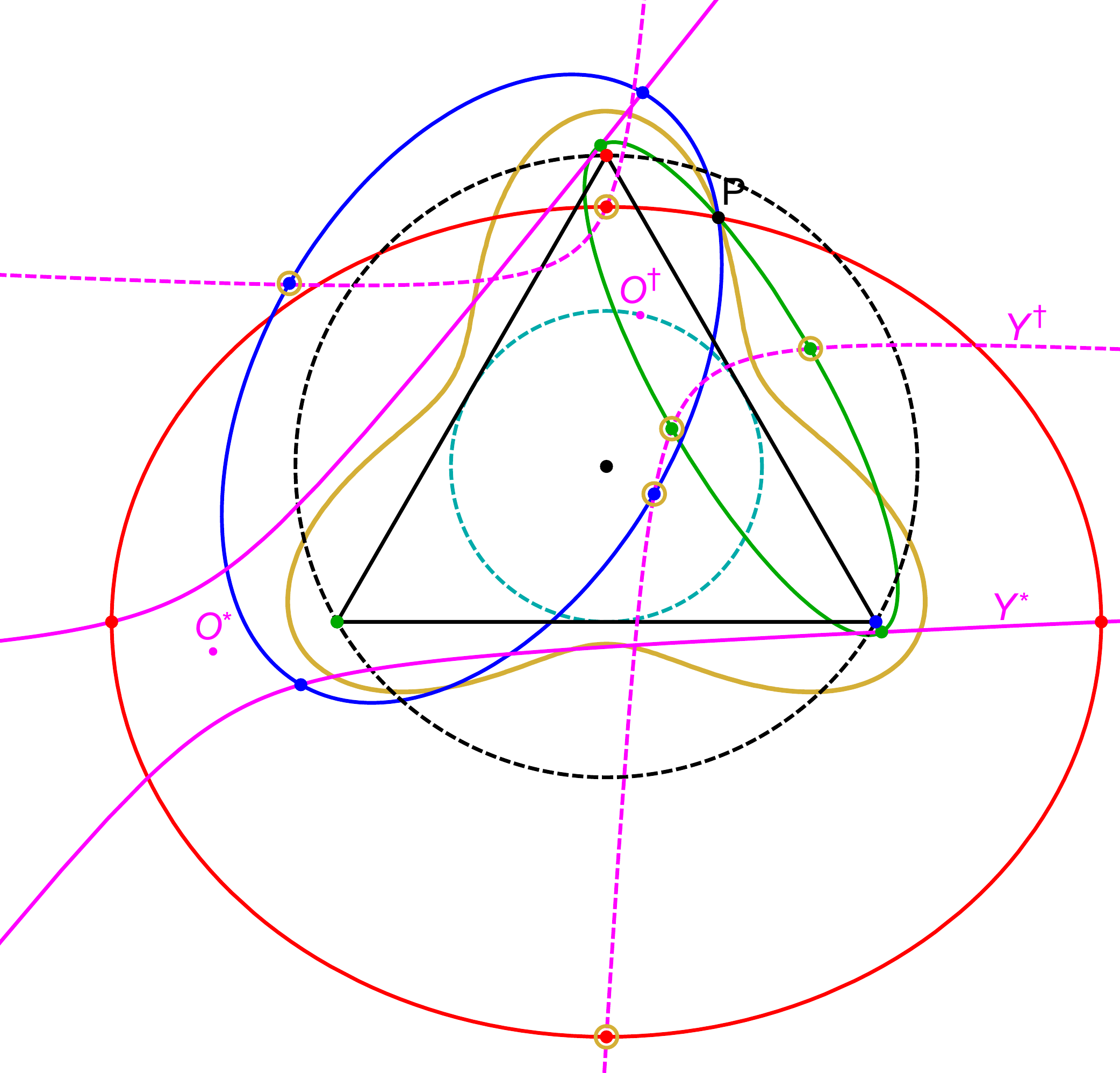}
\caption{Given an equilateral (black), the locus for $P$ such that the 6 co-vertices of the 3 P-ellipses lie on a conic is a degree-10 algebraic curve (gold) woven symmetrically about the equilateral (there is an isolated point at the centroid as well). The three P-ellipses (red, green, blue) are shown for a specific choice of $P$ on said locus. Also shown are (i) the conic $\Y^*$ (solid magenta, center $O^*$) through the major vertices, and (ii) the conic $\Y^{\dagger}$ (dashed magenta, center $O^{\dagger}$) through the 6 co-vertices (highlighted by small gold circles). Notice that if $P$ is on the locus, $O^{\dagger}$ lies on the incircle of the equilateral.}
\label{fig:pells-covertices}
\end{figure}


\section{A triad of V-hyperbolas}
\label{sec:v-hyp}
In this section we describe properties -- some old, some new -- of a special triad of hyperbolas, described in \cite[Sec. 12.4, p. 148]{yiu2001-intro} where they are called ``Soddy'' hyperbolas. Referring to \cref{fig:v-hyps}:

\begin{definition}[V-hyperbolas]
Given a triangle $\triangle ABC$, a triad of V-hyperbolas $\H_a,\H_b,\H_c$ have foci on $(B,C)$, $(C,A)$, $(A,B)$ and pass through $A$, $B$, and $C$, respectively.
\end{definition}

\begin{figure}
\centering
\includegraphics[trim=0 0 0 0,clip,width=.9\textwidth,frame]{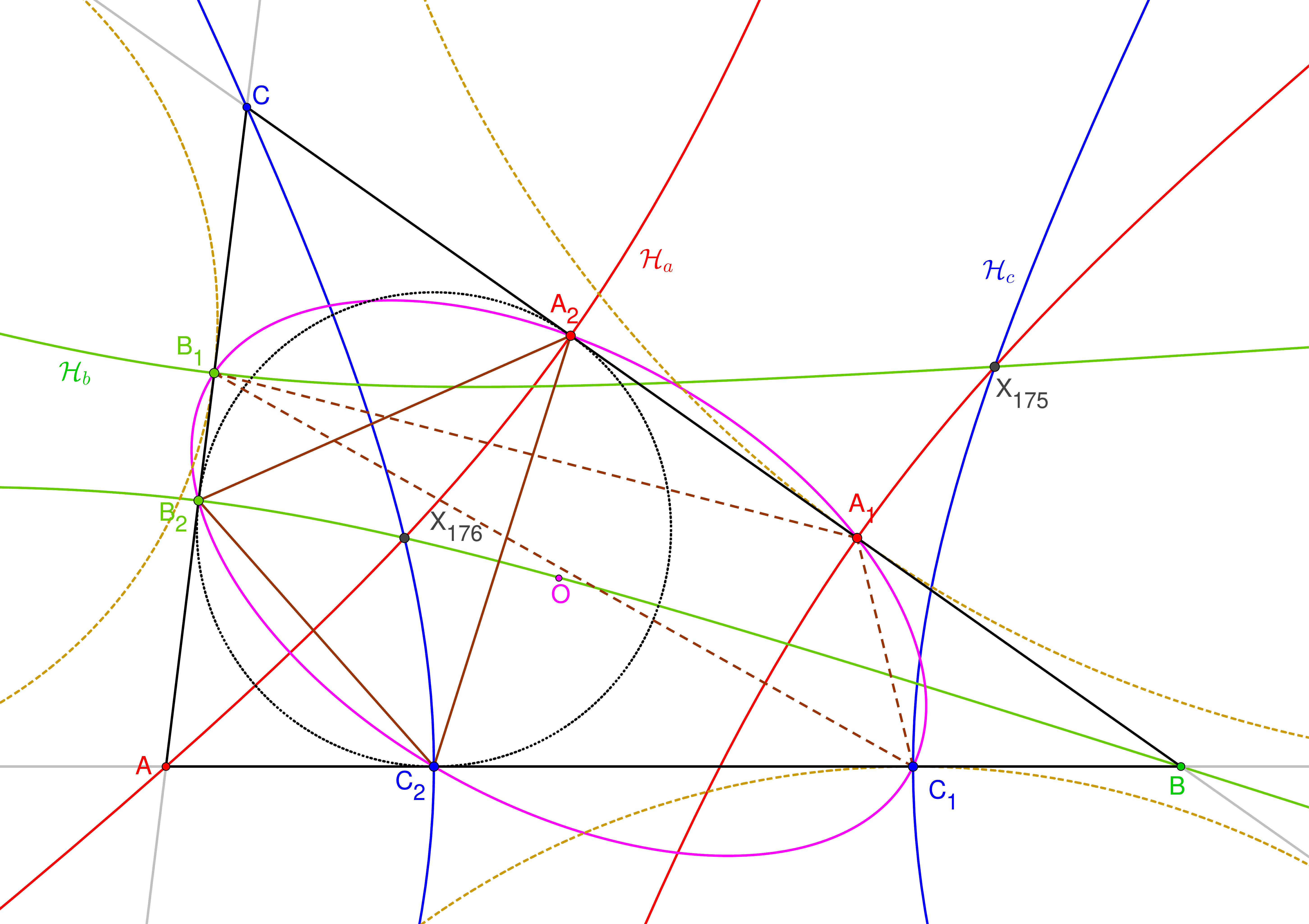}
\caption{A triad of V-hyperbolas $\H_a,\H_b,\H_c$ (red, green, blue) is shown with foci on $(B,C)$, $(C,A)$, and $(A,B)$ passing through $A,B,C$, respectively. Notice (i) their vertices taken as triples $A_1 B_1 C_1$ and $A_2 B_2 C_2$ are the vertices of the extouch (dashed brown) and intouch (solid brown) triangles; (ii) these 6 points are known to lie on the Privalov conic (magenta), whose center $O$ is $X_{5452}$ on \cite{etc}; (iii) the 3 hyperbolas pass through both the ``isoperimeteric'' and ``equal detour'' points, i.e., $X_{175}$ and $X_{176}$, respectively. Note: these coincide when the outer Soddy circle is external to the three mutually tangent circles.}
\label{fig:v-hyps}
\end{figure}

Let $A_1,A_2$ be the vertices of $\H_a$. Define $B_1,B_2$ and $C_1,C_2$ for $\H_b,\H_c$, respectively. Recall the extouch (resp. intouch) triangle is where the 3 excircles (resp. incircle) touch a triangle's sides.

\begin{remark}
Let $\lambda_a = |AB|-|AC|$. In barycentric coordinates for the vertices of $\H_a$ are given by:
$A_1=[0, a + \lambda_a, a - \lambda_a]$, and $A_2=[0, a - \lambda_a, a - \lambda_a]$, with the others computed cyclically.
\end{remark}

\begin{corollary}
$\triangle A_1 B_1 C_1$ (resp. $\triangle A_2 B_2 C_2$) is the extouch (resp. intouch) triangle of $\triangle ABC$.
\label{prop:extouch-intouch}
\end{corollary}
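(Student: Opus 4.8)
The plan is to pin down the two vertices of each V-hyperbola by the focal definition and then identify them with the contact points of the incircle and the excircles. First I would note that, since $\H_a$ has foci $B,C$ and passes through $A$, the focal definition of a hyperbola fixes the constant difference of focal radii at $\bigl||AB|-|AC|\bigr|=|c-b|$; hence its transverse semi-axis is $a_h=|c-b|/2$ and its center is the midpoint $M_a$ of $BC$ (the center of any central conic being the midpoint of its foci). The vertices $A_1,A_2$ are therefore the two points of line $BC$ lying at distance $a_h$ from $M_a$, one on each side.

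Next I would compute the contact points on $BC$ by the standard tangent-length argument (the incircle analogue of \cref{eq:tg1}): the incircle touches $BC$ at distance $p-b$ from $B$, while the $A$-excircle touches $BC$ at distance $p-c$ from $B$. Since $BM_a=a/2$, each of these contact points lies at distance $\bigl|a/2-(p-b)\bigr|=|b-c|/2$ from $M_a$, and the two points fall on opposite sides of $M_a$, being reflections of one another in it. Because $|b-c|/2=a_h$, the incircle and $A$-excircle contact points coincide exactly with $\{A_1,A_2\}$.

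To settle which vertex is which I would read off the barycentrics of the preceding Remark: with $\lambda_a=c-b$, the point $A_1=[0:a+\lambda_a:a-\lambda_a]$ gives $BA_1=p-c$, the $A$-excircle (extouch) contact, whereas $A_2=[0:a-\lambda_a:a+\lambda_a]$ gives $BA_2=p-b$, the incircle (intouch) contact. Applying the identical reasoning under the cyclic permutation $A\to B\to C$ shows that $B_1,C_1$ are the extouch contacts on $CA,AB$ and that $B_2,C_2$ are the intouch contacts, so $\triangle A_1B_1C_1$ is the extouch triangle and $\triangle A_2B_2C_2$ is the intouch triangle.

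I do not anticipate a genuine difficulty, since the whole statement rests on the single numerical coincidence $a_h=|b-c|/2$ matching the common offset of the two contact points from $M_a$. The only care required is bookkeeping: choosing the orientation of the vertex pairs consistently so that the three ``first'' vertices assemble into one triangle and the three ``second'' vertices into the other, and flagging the isosceles degeneracy $b=c$, where $\lambda_a=0$ forces $A_1=A_2=M_a$ and $\H_a$ collapses onto its transverse axis.
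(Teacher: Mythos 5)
Your proof is correct, and it takes a more self-contained route than the paper does. The paper offers no explicit argument at all: the statement is labeled a corollary precisely because the preceding Remark's barycentrics, $A_1=[0: a+\lambda_a: a-\lambda_a]=[0: p-b: p-c]$ and $A_2=[0: a-\lambda_a: a+\lambda_a]=[0: p-c: p-b]$ (after correcting the Remark's evident typo in $A_2$, as you silently did), are exactly the classical coordinates of the extouch and intouch points on $BC$, so the identification is immediate by inspection. You instead re-derive the vertex locations metrically from the focal definition --- transverse semi-axis $|b-c|/2$, center at the midpoint $M_a$ of $BC$ --- and then show via tangent lengths that the incircle and $A$-excircle contact points sit at that same distance $|b-c|/2$ from $M_a$ on opposite sides, which forces the two pairs of points to coincide as sets; the Remark's barycentrics enter only to resolve the labeling $A_1\leftrightarrow$ extouch, $A_2\leftrightarrow$ intouch. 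What your route buys is independence from the Remark (whose coordinates the paper never derives) and an explanation of \emph{why} the coincidence holds, namely the single identity $a_h=|b-c|/2$; what the paper's route buys is brevity, since once the coordinates are accepted the result is a one-line comparison. Your flagging of the isosceles degeneracy $b=c$ is also apt --- the paper defers that case to a separate Remark about the V-hyperbola collapsing onto the perpendicular bisector of the base.
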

\noindent Recall that for any triangle, the intouch and extouch triangles have the same area \cite[extouch triangle]{mw}. Referring to \cite[X(5452)]{etc}:

\begin{corollary}
$A_1,A_2,B_1,B_2,C_1,C_2$ lie on the Privalov conic centered on $X_{5452}$, and whose barycentric coordinates $x,y,z$ satisfy:
\[
 k_1 k_2 k_3 (x^2 + y^2 + z^2) +  2 \left[ 
 k_2 (k_4 - 2 a b) x y +
 k_3 (k_4 - 2 a c) x z -
 k_1 (k_4 - 2 b c) y z \right] = 0
\]
where $a,b,c$ are the sidelengths of $\triangle ABC$, $k_1 = (a - b - c)$, $k_2 = (a + b - c)$, $k_3 = (a - b + c)$, and $k_4 = a^2 + b^2 + c^2$.
\end{corollary}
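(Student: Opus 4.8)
The plan is to reduce everything to a direct barycentric verification, exploiting the explicit coordinates already recorded in the preceding Remark and \cref{prop:extouch-intouch}. Writing $p=(a+b+c)/2$ for the semiperimeter, the six points are the extouch/intouch contact points, namely $A_1=[0,p-b,p-c]$ and $A_2=[0,p-c,p-b]$ on $BC$, together with $B_1=[p-a,0,p-c]$, $B_2=[p-c,0,p-a]$ on $CA$ and $C_1=[p-a,p-b,0]$, $C_2=[p-b,p-a,0]$ on $AB$. The first observation I would make is that the coefficients $k_1,k_2,k_3$ are, up to sign, the doubled tangent lengths: $k_1=-2(p-a)$, $k_2=2(p-c)$, $k_3=2(p-b)$, so that $k_1k_2k_3=-8(p-a)(p-b)(p-c)$. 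This is what makes the substitution collapse, and the minus sign in front of the $yz$ term in the stated equation is precisely what matches the negative sign of $k_1$; flagging this keeps the cancellation transparent.

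I would then substitute each point in turn. Because every point lies on a sideline, one coordinate vanishes and the equation loses four of its six monomials. For $A_1$ (with $x=0$) the equation reduces to $k_1k_2k_3(y^2+z^2)-2k_1(k_4-2bc)yz$, and plugging in $y=p-b$, $z=p-c$ it factors as
\[
4(p-a)(p-b)(p-c)\bigl\{\,(k_4-2bc)-2[(p-b)^2+(p-c)^2]\,\bigr\}.
\]
The bracket vanishes identically: since $(p-b)+(p-c)=a$ and $(p-b)-(p-c)=c-b$, one has $2[(p-b)^2+(p-c)^2]=a^2+(b-c)^2=k_4-2bc$. The point $A_2$ is handled by the same computation, since $y^2+z^2$ and $yz$ are symmetric in the two nonzero coordinates. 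The cases on $CA$ and $AB$ are cyclic copies: each reduces to $4(p-a)(p-b)(p-c)$ times a bracket of the form $(k_4-2ac)-2[(p-a)^2+(p-c)^2]$ or $(k_4-2ab)-2[(p-a)^2+(p-b)^2]$, and each vanishes by the corresponding instance of the identity $2[(p-i)^2+(p-j)^2]=k^2+(i-j)^2$.

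This shows that all six points satisfy the stated equation. To conclude that this is the conic through them, I would note (as in \cref{prop:yiu3}) that the six points do lie on a conic by Carnot's theorem — here the extouch and intouch points on each sideline divide it in reciprocal ratios, so the six-factor Carnot product telescopes to $1$ — and that no three of the six are collinear, so five of them already determine that conic uniquely; the verification above then identifies it with the stated equation.

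Finally, for the center I would compute the pole of the line at infinity $x+y+z=0$ with respect to the conic, i.e.\ $\mathrm{adj}(M)\,(1,1,1)^{\top}$ where $M$ is the symmetric matrix of the quadratic form, and check that the resulting barycentric triple is proportional to the coordinates listed for $X_{5452}$ in \cite{etc}. I expect the substitution step to be essentially mechanical once the factorization above is in hand; the only genuinely external ingredient is this last identification of the center with the named triangle center, and the adjugate simplification is the computation most likely to need care, so it is the natural place to defer to a CAS, consistent with the paper's practice.
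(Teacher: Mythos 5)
Your proof is correct, but it takes a genuinely different route from the paper, which offers no computation here at all: the corollary is read off from the identification of $\triangle A_1B_1C_1$ and $\triangle A_2B_2C_2$ with the extouch and intouch triangles (\cref{prop:extouch-intouch}) together with the catalogued Privalov conic and its center $X_{5452}$ in \cite{etc} (the stated equation amounts to the P-hyperbola conic recorded in the appendix, specialized to $\lambda_a=c-b$, $\lambda_b=a-c$, $\lambda_c=b-a$, via $a^2-(b-c)^2=k_2k_3$, $a^2+(b-c)^2=k_4-2bc$, and cyclic mates). Your substitution argument replaces that citation with a self-contained verification, and it is sound: with $k_1=-2(p-a)$, $k_2=2(p-c)$, $k_3=2(p-b)$, each of the six substitutions does collapse to $4(p-a)(p-b)(p-c)$ times a bracket that vanishes by $2[(p-b)^2+(p-c)^2]=a^2+(b-c)^2=k_4-2bc$ and its cyclic versions; I checked all three brackets, including the sign bookkeeping forced by the $-k_1$ coefficient on the $yz$ term. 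What your route buys is a proof of the incidence claim independent of \cite{etc}; what it still defers to a CAS --- exactly as the paper does --- is the identification of the center via $\mathrm{adj}(M)(1,1,1)^{\top}$. Two minor remarks: (i) the paper's preceding remark has a typo ($A_2=[0,a-\lambda_a,a-\lambda_a]$ should end in $a+\lambda_a$), which you silently and correctly fix; (ii) in your uniqueness aside, the hypothesis actually needed for five points to determine a conic is that no \emph{four} are collinear --- which holds automatically here, since each sideline carries exactly two of the six points and any other line meets each sideline at most once --- whereas your stronger ``no three collinear'' claim is asserted without proof; this is harmless, because the corollary as stated only requires the substitution check you already performed.
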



\begin{remark}
When $\triangle{ABC}$ is isosceles, one of the V-hyperbolas is degenerate, namely, a pair of coinciding lines at the perpendicular bisector of the base. In this case, the Privalov conic is tangent to the base at its midpoint.
\end{remark}

\subsection*{Intersections between V-hyperbolas:}

Referring to \cref{fig:soddy1}, recall that given a triangle, one can construct\footnote{These pass through the vertices of the intouch triangle.} three ``kissing'' circles $\mathcal{C}_A$,
$\mathcal{C}_B$, and  $\mathcal{C}_C$
centered each on each vertex, and externally tangent to each other \cite{soddy1936}. 

The Apollonius' problem for this triple has (as usual) eight distinct solutions, two of which have the same tangency type (tangent externally or internally to all three circles). 

\begin{definition}[Soddy circles of a triangle]
The two solutions for the Apollonius' problem with the same tangency type are the so-called ``Soddy circles''. The inner Soddy circle is the one whose center is inside the triangle and whose interior does not intersect any of the three kissing circles; the other one is the outer Soddy circle. 
\label{def:soddy}
\end{definition}

Note that the outer Soddy circle, always tangent to the 3 kissing circles, can either (i) contain them (see \cref{fig:soddy1}), (ii) be a line tangent to them, or (iii) be externally tangent to them. For (ii) and (iii) see  \cref{fig:soddy3}.

The centers of Soddy circles correspond to a pair of triangle centers found in \cite{etc} and derived in \cite{yff2007-isoperim}. Namely:

\begin{definition}[Isoperimetric point] The center of the outer Soddy circle ($X_{175}$ in \cite{etc}). Equivalently, the unique point $X$ such that:
\[ |XB| + |XC| \pm |BC| = |XC| + |XA| \pm |CA| = |XA| + |XB| \pm |AB|\]
where the positive (resp. negative) sign is chosen if the outer Soddy circle contains (resp. is external to)   the three mutually tangent circles in \cref{def:soddy}. As derived in \cite{yff2007-isoperim}, containment corresponds to:
\[
\tan{\frac{A}{2}}+\tan{\frac{B}{2}}+\tan{\frac{C}{2}}<2 \]
\end{definition}

In \cite{yff2007-isoperim} it is shown that if the sum of half-tangents is exactly 2, then the outer Soddy circle degenerates to a line. Referring to  \cref{fig:half-tangs}, it can be shown that:

\begin{figure}
\centering
\includegraphics[width=.6\textwidth]{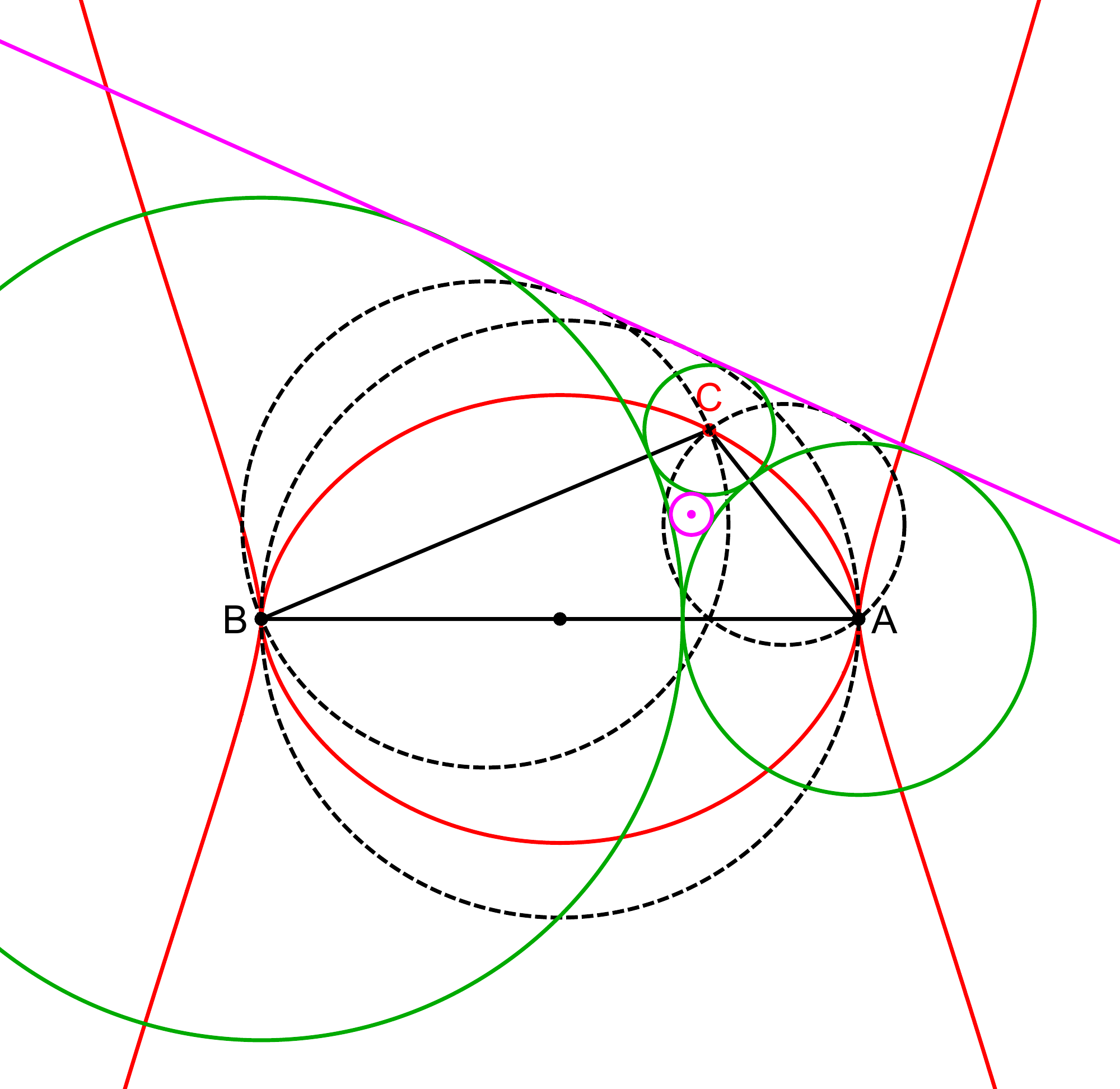}
\caption{When  $A$ and $B$ are fixed, the locus of $C$ (red)  such that the outer Soddy circle degenerates to a line (magenta) is given by the degree-6 implicit equation in \cref{prop:half-tangs}. This line is also tangent to the 3 circles (dashed black) whose diameters are the sides of $\triangle ABC$.}
\label{fig:half-tangs}
\end{figure}

\begin{proposition}
Without loss of generality, let $A=(-1,0)$, $B=(1,0)$, the locus of $C$ such that the sum of half-tangents of $\triangle ABC$ is 2 is given by the union of the following degree-6 polynomial and its reflection about the $x$-axis:
{\small \[
-4 x^6-4 x^4 (2 y^2 + 2 y+1)-4 x^2 (y^4 + y^3 -4 y-5)  + 4 y^5+13 y^4 + 20 y^3+8 y^2 - 8 y- 12=0\]}
\label{prop:half-tangs}
\end{proposition}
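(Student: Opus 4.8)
The plan is to convert the transcendental condition $\tan\frac{A}{2}+\tan\frac{B}{2}+\tan\frac{C}{2}=2$ into a polynomial condition on the sidelengths, and only afterwards substitute coordinates. First I would use the half-angle formula $\tan\frac{A}{2}=r/(s-a)$ (and cyclically), where $s$ is the semiperimeter and $r$ the inradius. Writing $u=s-a$, $v=s-b$, $w=s-c$ and using $r=\mathrm{Area}/s$ together with Heron's identity $\mathrm{Area}^2=s\,uvw$, the sum collapses:
\[
\tan\tfrac{A}{2}+\tan\tfrac{B}{2}+\tan\tfrac{C}{2}=r\Big(\tfrac1u+\tfrac1v+\tfrac1w\Big)=\frac{uv+vw+wu}{\mathrm{Area}}.
\]
A one-line expansion gives $uv+vw+wu=\tfrac14\big(2(ab+bc+ca)-(a^2+b^2+c^2)\big)$, so the condition becomes the purely algebraic
\[
2(ab+bc+ca)-(a^2+b^2+c^2)=8\,\mathrm{Area}
\]
(equivalently $4R+r=2s$, matching Yff's degeneracy criterion from \cite{yff2007-isoperim}).

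Next I would specialize to $A=(-1,0)$, $B=(1,0)$, $C=(x,y)$ with $y>0$, so that $c=2$, $\mathrm{Area}=y$, $a^2=(x-1)^2+y^2$, and $b^2=(x+1)^2+y^2$. Since $bc+ca=2(a+b)$ and $a^2+b^2+c^2=2x^2+2y^2+6$, the condition reduces to the single radical equation
\[
ab+2(a+b)=x^2+y^2+3+4y=:M.
\]
The two remaining radicals are controlled by $(a+b)^2=a^2+b^2+2ab$ with $a^2+b^2=2x^2+2y^2+2$, and by the polynomial value $(ab)^2=((x-1)^2+y^2)((x+1)^2+y^2)=(x^2+y^2+1)^2-4x^2$.

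Then I would eliminate the radicals by squaring twice. Writing $Q=ab$, squaring $2(a+b)=M-Q$ and using $(a+b)^2=2x^2+2y^2+2+2Q$ gives a relation \emph{linear} in $Q$, namely $Q(2M+8)=M^2+Q^2-4(2x^2+2y^2+2)$, in which $Q^2$ is already a polynomial. Squaring once more and substituting $Q^2=(x^2+y^2+1)^2-4x^2$ produces a degree-$8$ polynomial in $x,y$. I expect this to factor as the claimed degree-$6$ polynomial times an extraneous quadratic introduced by the sign ambiguities of the two squarings (the spurious branches $2(a+b)=-(M-Q)$ and $Q(2M+8)=-(M^2+Q^2-4(2x^2+2y^2+2))$). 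Dividing out that factor and collecting terms should reproduce the stated equation exactly; finally, since the genuine area is $|y|$, the $y<0$ half of the locus is obtained by $y\mapsto-y$, giving the reflected curve asserted in the statement.

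The main obstacle is purely computational: carrying the double elimination through without sign errors and correctly isolating the extraneous quadratic factor so that the degree drops from $8$ to $6$. Confirming the final coefficient match — in particular the $y^5,y^4,y^3$ terms that break the $y$-symmetry and pin down the reflection behaviour — is where a CAS, or at least a careful sample-point verification, would be indispensable.
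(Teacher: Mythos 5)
Your proposal is correct, and it is genuinely \emph{more} of a proof than what the paper offers: the paper states this proposition with no argument at all (it is introduced by ``it can be shown that,'' and the neighbouring propositions are justified only by unspecified CAS manipulation). Your reduction chain is sound at every step: $\tan\frac{A}{2}=r/(s-a)$ does collapse the sum to $(uv+vw+wu)/\mathrm{Area}$, the identity $uv+vw+wu=\frac14\bigl(2(ab+bc+ca)-(a^2+b^2+c^2)\bigr)$ is right (and your restatement as $4R+r=2s$ matches Yff's criterion), and the coordinate specialization to the single radical equation $ab+2(a+b)=M$ with $M=x^2+y^2+4y+3$, together with the two auxiliary identities for $(a+b)^2$ and $(ab)^2$, is exactly what is needed.

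One prediction in your final step is wrong, though the error is in your favor. Carrying out your own elimination with $Q=ab$, $P=Q^2=(x^2+y^2+1)^2-4x^2$ and $N=M^2+P-4(2x^2+2y^2+2)$, the equation obtained after the second squaring is $N^2-P\,(2M+8)^2=0$. The naive degree count is $8$, but the top terms cancel identically: writing $r^2=x^2+y^2$, one has $N=2r^4+8yr^2-4x^2+16y^2+24y+2$ and $P(2M+8)^2$ has the same leading part $4r^8+32yr^6$, so the difference has degree $6$. Indeed, direct expansion gives
\begin{align*}
N^2-P(2M+8)^2=16\bigl(&-4x^6-4x^4(2y^2+2y+1)-4x^2(y^4+y^3-4y-5)\\
&+4y^5+13y^4+20y^3+8y^2-8y-12\bigr),
\end{align*}
i.e.\ exactly $16$ times the stated sextic, with no extraneous quadratic factor to divide out. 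The structural reason is visible in the factorization over the radical extension: $N^2-P(2M+8)^2$ is the product of the four conjugates $\bigl(M-ab\mp2(a+b)\bigr)\bigl(M+ab\mp2(a-b)\bigr)$, and since $ab=r^2+O(1)$, the two factors containing $M-ab$ grow only linearly in $|(x,y)|$ while the other two grow quadratically, so the total degree is $1+1+2+2=6$, not $8$. The spurious sign branches you worried about do not split off as a separate polynomial factor; they are simply other real arcs of the same sextic, which is consistent with the proposition's phrasing (the genuine locus is contained in the union of this curve and its reflection). Your final remark handling $y<0$ by $y\mapsto-y$, since the area is $|y|$, is also correct. As a sanity check, the isosceles point $C=(0,3/4)$ satisfies both the half-tangent condition and the stated polynomial.
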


\begin{definition}[Equal detour point] The center of the inner Soddy circle ($X_{176}$ in \cite{etc}), always internal to a triangle. Also the unique point $X$ in $\triangle ABC$ such that:
\[|XB| + |XC| -|BC| = |XC| + |XA| - |CA| = |XA| + |XB| - |AB|\]
\end{definition}

\begin{proposition}
The three V-hyperbolas intersect at the centers of the two Soddy circles, i.e., $X_{175}$ and $X_{176}$, respectively.
\end{proposition}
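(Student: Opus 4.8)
The plan is to argue purely from the two-focus string characterization of a hyperbola, so that the statement reduces to comparing the defining distance relations of the two Soddy centers with the three hyperbola constants. Recall that a hyperbola with foci $F_1,F_2$ through a point $P$ is the full locus $\{X : \bigl||XF_1|-|XF_2|\bigr| = \bigl||PF_1|-|PF_2|\bigr|\}$ (both branches). Hence, with $a=|BC|$, $b=|CA|$, $c=|AB|$, the V-hyperbola $\H_a$ (foci $B,C$, through $A$) is $\{X : \bigl||XB|-|XC|\bigr| = |c-b|\}$, and cyclically for $\H_b$ and $\H_c$. So the whole claim is that each Soddy center realizes these three constant differences simultaneously.

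First I would treat the equal detour point $X_{176}$. Writing $\alpha=|XA|$, $\beta=|XB|$, $\gamma=|XC|$, its defining equalities $\beta+\gamma-a=\gamma+\alpha-b=\alpha+\beta-c$ give, upon pairwise subtraction, the three signed relations $\beta-\gamma=c-b$, $\gamma-\alpha=a-c$, and $\alpha-\beta=b-a$. Each right-hand side is exactly the corresponding hyperbola constant, and in fact matches the signed difference at the passing-through vertex, so $X_{176}$ lies on $\H_a$, $\H_b$, and $\H_c$ simultaneously, on the branch through $A$, $B$, $C$ respectively.

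Next I would treat the isoperimetric point $X_{175}$. In the containment case (outer Soddy circle encloses the three kissing circles, equivalently $\tan(A/2)+\tan(B/2)+\tan(C/2)<2$), the $+$ sign applies and the defining equalities read $\beta+\gamma+a=\gamma+\alpha+b=\alpha+\beta+c$. Pairwise subtraction now yields $\beta-\gamma=b-c$, $\gamma-\alpha=c-a$, $\alpha-\beta=a-b$: the same absolute values $|c-b|$, $|a-c|$, $|b-a|$ but with opposite sign. Thus $X_{175}$ again satisfies $\bigl||XB|-|XC|\bigr|=|c-b|$ and its cyclic analogues, lying on each full hyperbola, though on the branch opposite the vertex. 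In the external case the $-$ sign applies and the equalities for $X_{175}$ coincide verbatim with those of $X_{176}$, so the two centers merge and the previous paragraph already covers them.

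The computation itself is immediate; the only genuine care is bookkeeping. The main obstacle is being explicit about (i) interpreting each V-hyperbola as the full two-branched conic rather than the single arc through its vertex, which is precisely what makes $X_{175}$ admissible, and (ii) handling the $\pm$ convention in the definition of the outer Soddy circle, including the degenerate coincidence $X_{175}=X_{176}$ in the external case noted above. No coordinate system, barycentric expression, or computer-algebra manipulation is required.
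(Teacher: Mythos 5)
Your argument is correct at its core and takes a genuinely different route from the paper's. The paper proves this proposition via the tangent-circle picture: it interprets each branch of each V-hyperbola as the locus of centers of circles externally (resp.\ internally) tangent to two of the three kissing circles centered at $A$, $B$, $C$, concludes that the inner Soddy center is the common point of the three branches through the intouch points, treats the outer center by a case analysis on which branches meet, and secures the reality of the needed intersections by observing that each pair of V-hyperbolas shares a focus and hence has four real intersections. You instead quote the metric characterizations of $X_{176}$ and $X_{175}$ (which the paper itself adopts as definitions, following Yff) and check by pairwise subtraction that any point satisfying those equal-sum relations has the focal-distance differences $|c-b|$, $|a-c|$, $|b-a|$, hence lies on all three full hyperbolas; the sign bookkeeping correctly places $X_{176}$ on the branches through the vertices and, in the containment case, $X_{175}$ on the opposite branches. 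Your route is shorter and needs no separate existence argument (the Soddy circles exist by Apollonius); what it forgoes is the structural information the paper's proof extracts about which circles' tangency data correspond to which branches.

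One assertion is false, although it does not sink the proof: in the external case, $X_{175}$ and $X_{176}$ do \emph{not} merge. Both then satisfy equal-detour-type relations $|XB|+|XC|-|BC| = |XC|+|XA|-|CA| = |XA|+|XB|-|AB|$, but with \emph{different} common values (twice the inner, resp.\ outer, Soddy radius), and they are two distinct points; the paper makes exactly this point when it notes that in this case the three branches through the intouch points meet in \emph{two} distinct points, the centers of the inner and outer Soddy circles. The ``uniqueness'' in the equal-detour characterization, which you implicitly invoked, holds only among points interior to the triangle. Your conclusion survives because the subtraction argument uses only the relations and never the common value, so it applies verbatim to each of the two distinct centers; the sentence about merging should simply be replaced by that observation.
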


\begin{proof}
Assume that 
$a>c>b$ as in \cref{fig:v-hyps}. Then 
$r_a<r_c<r_b$.  $\mathcal{C}_a$
and $\mathcal{C}_b$ are two circles  centered at $A$ and $B$ and of radii $r_a<r_b$, which are externally tangent at $C_2$. The locus of the centers of the circles that are externally tangent to both $\mathcal{C}_a$ and $\mathcal{C}_b$ is the branch of the hyperbola with foci on $A$ and $B$, that passes through their tangency point $C_2$. The other branch contains the centers of the circles that are internally tangent (i.e., contain both). The internal Soddy circle is externally tangent to the three circles $\mathcal{C}_a$, $\mathcal{C}_b$, and $\mathcal{C}_c$; hence its center  is necessarily the intersection of the three branches of hyperbolas passing through $A_2,B_2,C_2,$ the vertices of the intouch triangle.
Since $r_a<r_c<r_b$, we may specify those branches as $\H_a^{+}=\{P: |PB|-|PC|=r_b-r_c\}$, $\H_b^{+}=\{P: |PC|-|PA|=r_c-r_a\}$, and  $\H_c^{+}=\{P: |PA|-|PB|=r_a-r_b\}$. 

Thus, if a point 
$P\in \H_a^{+}\bigcap \H_c^{+}$
then 
    $PA-PC=r_a-r_c$ 
  hence it is on $\H_b^{+}$ as well. Since 
  $r_c+r_a=b$ and $r_b+r_a=c$,
  $P$ verifies the equal-detour definition of $X_{176}$.

The points on the other branches contain centers of circles that are internally tangent to the other two;
therefore, if two branches, say
$\H_a^{-} $ and $\H_b^{-}$
have a common point $P$, then, 
as above, $P$ is also on the third branch and is the (unique) center of an external Soddy circle, that contains  $\mathcal{C}_a$, $\mathcal{C}_b$, and $\mathcal{C}_c$. In this case, $P$ verifies the isoperimetric definition of $X_{175}$.

In contrast, if 
$\H_a^{-}$ and $\H_b^{-}$ do not intersect, then there will be no ``negative branch'' intersection. In this case, the three positive branches will intersect in two distinct points: the centers of the inner and outer Soddy circles. Note that each pair 
$(\H_a,\H_b)$, $(\H_b,\H_c)$, and $(\H_c,\H_a)$ have one common focus
$C$, $A$, $B$ respectively; hence they necessarily have four (real) intersections.
This guarantees the existence of both detour and isoperimetric points.
\end{proof}


\begin{figure}
\centering
\includegraphics[width=.6\textwidth]{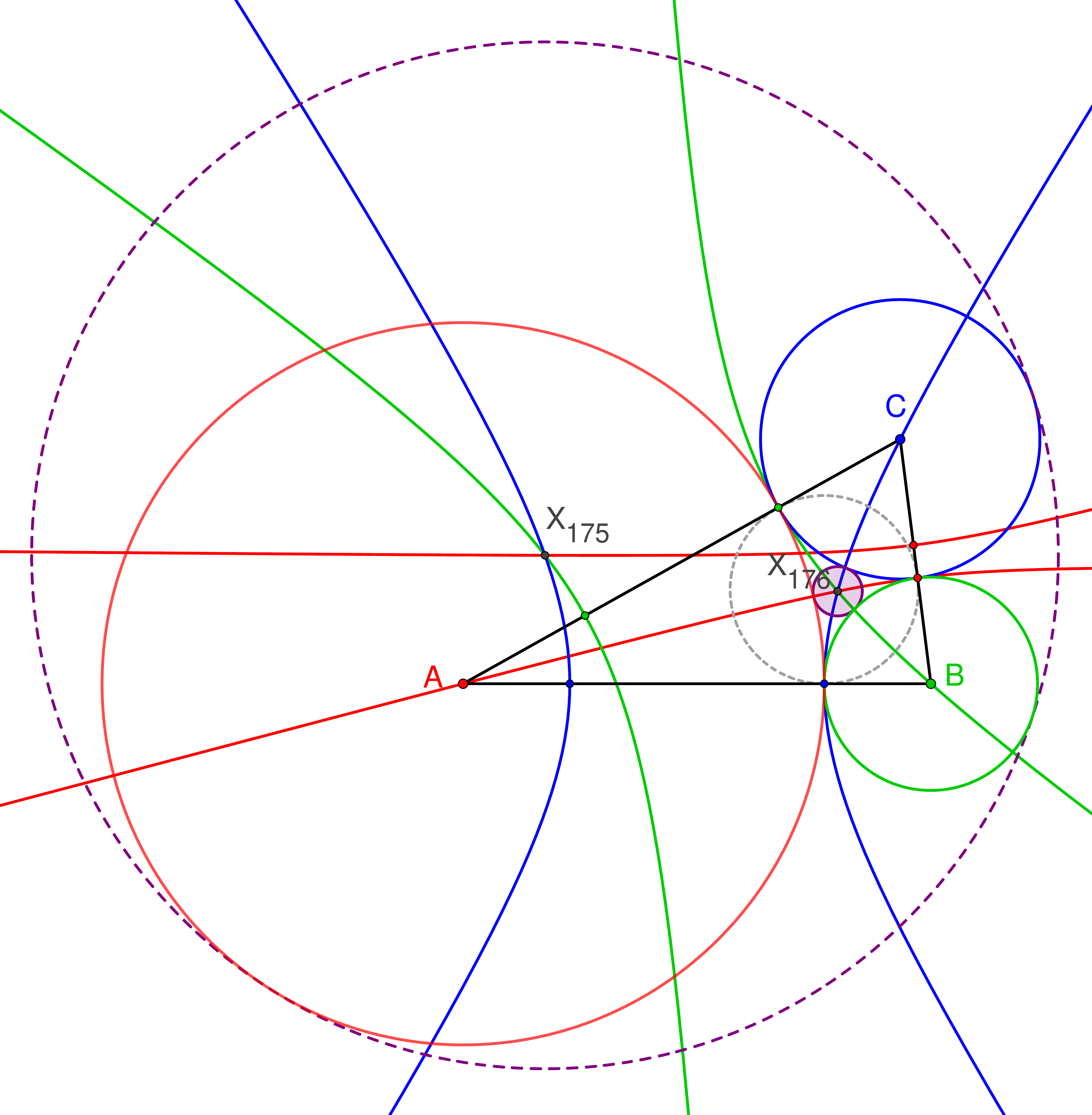}
\caption{A construction found in \cite[Sec. 12.4, p. 148]{yiu2001-intro}: 3 mutually-tangent circles (red, green, blue) of $\triangle ABC$ touch at the contact points of the incircle (dashed gray) with the sides. In turn, these coincide with a vertex of each the 3 V-hyperbolas. Notice the latter intersect at the centers $X_{176}$ and $X_{175}$ of the inner (shaded purple), and outer (dashed purple) Soddy circles, respectively.}
\label{fig:soddy1}
\end{figure}

\begin{figure}
\centering
\begin{subfigure}[m]{0.48\textwidth}
\centering
\includegraphics[trim=0 0 0 0,clip,width=\textwidth]{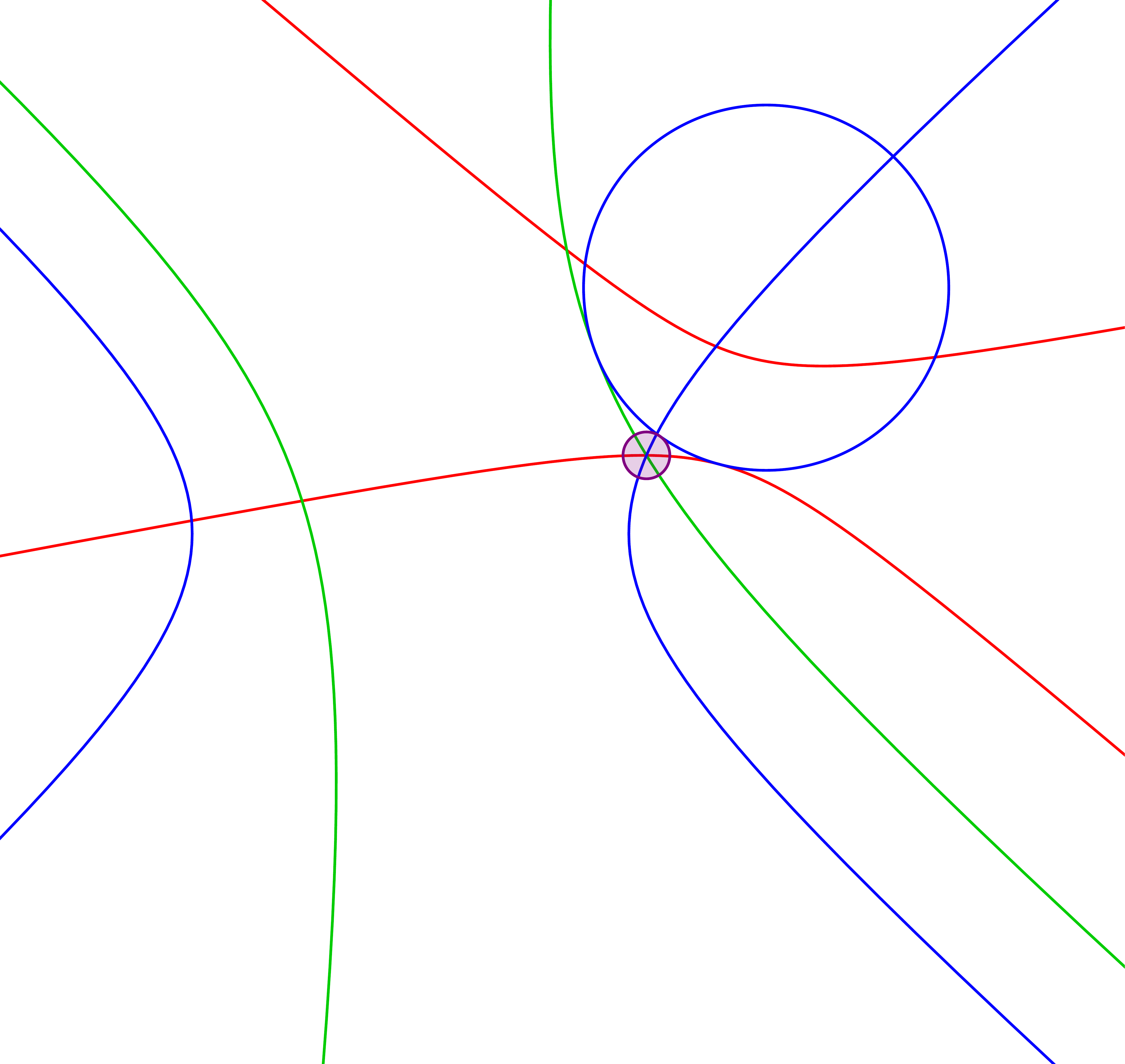}
\end{subfigure}
\rulesep
\begin{subfigure}[m]{0.48\textwidth}
\centering
\includegraphics[trim=0 0 0 0,clip,width=\textwidth]{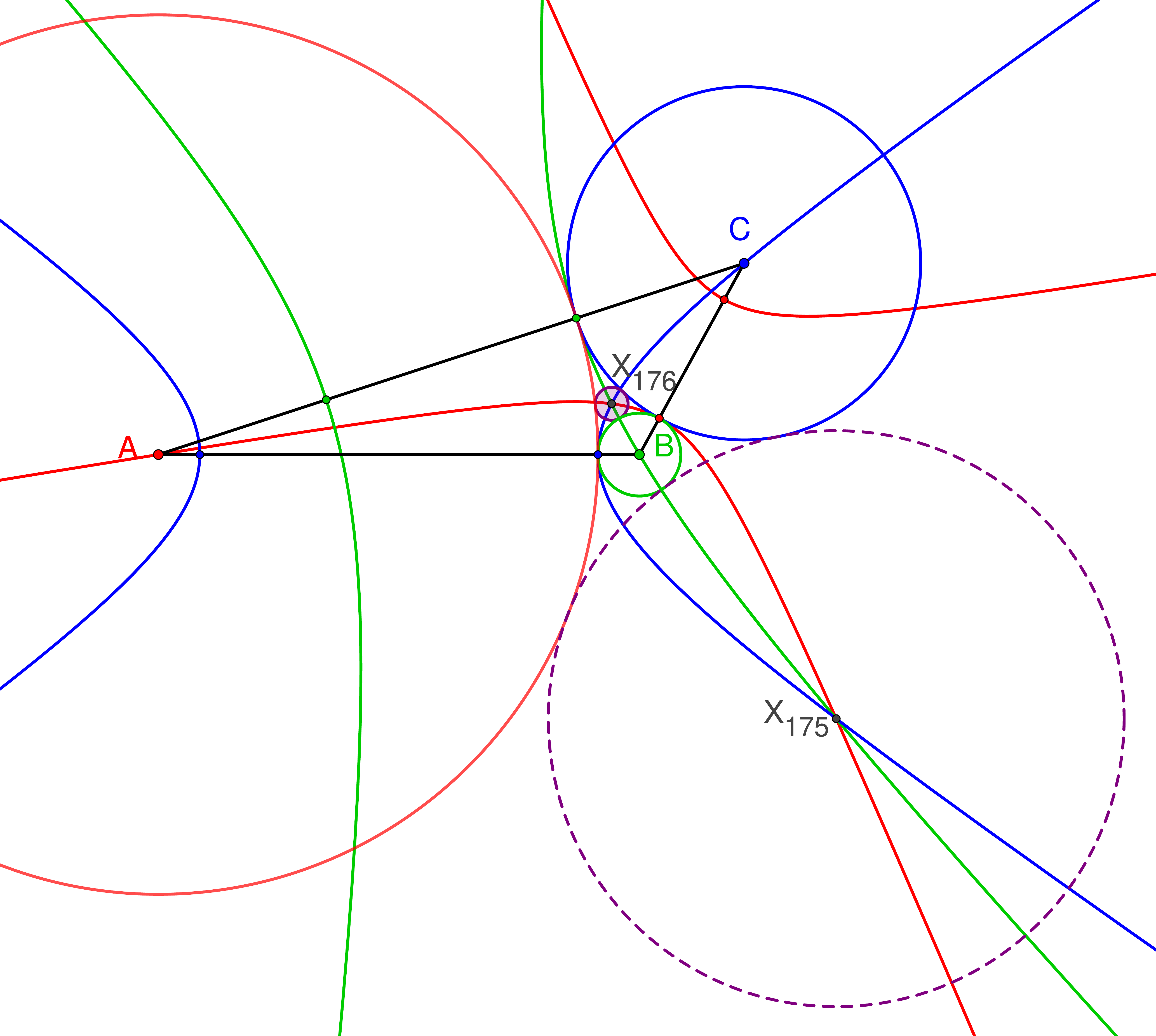}
\end{subfigure}
\caption{Two cases of $\triangle ABC$ such that the external Soddy circle (dashed purple) is: (left) a straight line ( $\sum\tan(\theta_i)=2$), and (right) does not contain the three kissing circles. Notice that in both cases the three V-hyperbolas (red, green and blue) intersect at the center $X_{176}$ of the inner Soddy circle (shaded purple), interior to the triangle. In the first case their second intersection is at infinity (in the direction perpendicular to the Soddy line), while in the second case they intersect along the same branches where their $X_{176}$ intersection lies.}
\label{fig:soddy3}
\end{figure}

\noindent Referring to 
\cref{fig:vhyps-vells}:

\begin{proposition}
The $\H_a$ V-hyperbola passes through the intersections $A'$ and $A''$ of V-ellipses $\E_b$, and $\E_c$. The same holds for $\H_b,\H_c$, cyclically.
\end{proposition}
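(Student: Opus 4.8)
The plan is to avoid the implicit equations entirely and argue purely from the bifocal (metric) definitions of the three conics. Write $a=|BC|$, $b=|CA|$, $c=|AB|$. Recall that $\E_b$ has foci $C,A$ and passes through $B$, so every point $P$ on $\E_b$ satisfies $|PC|+|PA|=|BC|+|BA|=a+c$; similarly $\E_c$ has foci $A,B$ and passes through $C$, so every point $P$ on $\E_c$ satisfies $|PA|+|PB|=|CA|+|CB|=a+b$.

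First I would use the fact that $A'$ and $A''$, being the two intersection points $\E_b\cap\E_c$, satisfy both of these focal identities at once. Subtracting the two relations eliminates the common distance $|PA|$ and leaves $|PC|-|PB|=(a+c)-(a+b)=c-b$; equivalently $|PB|-|PC|=b-c$ at each of $A'$ and $A''$.

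Next I would identify $\H_a$ through its own bifocal definition: it has foci $B,C$ and passes through $A$, hence it is exactly the locus $\{P:\ \bigl||PB|-|PC|\bigr|=\bigl||AB|-|AC|\bigr|\}=\{P:\ \bigl||PB|-|PC|\bigr|=|c-b|\}$. Since we have just shown $|A'B|-|A'C|=|A''B|-|A''C|=b-c$, both points satisfy $\bigl||PB|-|PC|\bigr|=|c-b|$, and therefore both lie on $\H_a$. One should note in passing that $A'$ and $A''$ fall on the branch opposite to the one containing $A$ (the sign flips from $c-b$ to $b-c$), but both branches are part of the single conic $\H_a$, so the conclusion is unaffected. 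The statements for $\H_b$ and $\H_c$ then follow by cyclic permutation of $A,B,C$.

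I expect essentially no computational obstacle here; the only care required is the bookkeeping of which pair of vertices are the foci and which vertex each conic passes through, so that the subtraction cancels precisely the shared distance $|PA|$ and produces a difference in exactly the focal distances $|PB|,|PC|$ of $\H_a$. A minor point worth flagging is that the argument places $A',A''$ on $\H_a$ viewed as a full algebraic curve (both branches), which is all the statement requires.
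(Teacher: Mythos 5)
Your proof is correct and is essentially identical to the paper's: the paper likewise subtracts the two focal-sum identities $|A''A|+|A''C|=|BA|+|BC|$ and $|A''A|+|A''B|=|CA|+|CB|$ to obtain $|A''C|-|A''B|=|BA|-|CA|$, concluding that $A'$, $A''$ lie on the branch of $\H_a$ not containing $A$. Your branch-sign remark matches the paper's observation as well, so there is nothing to add.
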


\begin{proof}
Referring to
\cref{fig:vhyps-vells},
let $A''$ denote an intersection of $\E_b$ with $\E_c$. Then:
\[|A''A|+ |A'' B|=|CA|+|CB|,\;\;\; 
|A''A|+ |A'' C|=|BA|+|BC|.\]
Subtracting, $|A'' C|- |A'' B|=|BA|-|CA|$, meaning that 
$A''$ lies on the branch of hyperbola $\H_a$ not containing  $A$.
\end{proof}

Still referring to \cref{fig:vhyps-vells}, let $A_1$ and $A_2$ denote the two 2-branch intersections between $\H_b$ and $\H_c$, define $B_1,B_2$ and $C_1,C_2$ cyclically.  

\begin{proposition}
The three lines $A_1 A_2$, $B_1 B_2$ and $C_1 C_2$ concur at the Nagel point $X_8$ of $\triangle ABC$. 
\end{proposition}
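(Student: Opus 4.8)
The plan is to exploit the shared focus $A$ of $\H_b$ and $\H_c$ to enumerate the four points of $\H_b\cap\H_c$ by branches, peel off the two already-known common points, and reduce the whole concurrency to a single incidence check using cyclic symmetry.

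First I would do the branch bookkeeping. Writing $\alpha=|PA|$, $\beta=|PB|$, $\gamma=|PC|$, membership in $\H_b$ (foci $A,C$, through $B$) reads $|\alpha-\gamma|=|a-c|$ and membership in $\H_c$ (foci $A,B$, through $C$) reads $|\alpha-\beta|=|a-b|$. The four sign choices for the two branches give the four intersection points. For the two matched sign choices one gets $\beta-\gamma=\pm(b-c)$, so the point also satisfies the $\H_a$ condition $|\beta-\gamma|=|b-c|$; these are exactly the two points $X_{175},X_{176}$ common to all three hyperbolas from the previous proposition. The two mixed sign choices instead give $\beta-\gamma=\pm(2a-b-c)$, so they lie on the hyperbola confocal with $\H_a$ (again foci $B,C$) with constant $|2a-b-c|$; these are the genuine two-branch intersections $A_1,A_2$. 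This both justifies the terminology and certifies that $A_1,A_2$ are distinct from $X_{175},X_{176}$.

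Next I would locate the chord $A_1A_2$ in barycentric coordinates, starting from the conic equation of $\H_a$ (and cyclically $\H_b,\H_c$) read off the vertex data in the earlier remark. The four base points of the pencil $\langle\H_b,\H_c\rangle$ split into the collinear pair $X_{175},X_{176}$, which spans the Soddy line $\ell_S$, and the pair $A_1,A_2$. Hence the degenerate member of the pencil containing $\ell_S$ factors as $\ell_S\cdot\ell_{A_1A_2}$, and I would extract $\ell_{A_1A_2}$ by selecting the combination $s\H_b+t\H_c$ divisible by $\ell_S$ and dividing $\ell_S$ out. Equivalently, and parallel to the earlier V-ellipse computation, one solves the two conics directly: $A_1$ comes out with the doubled area $S$ entering linearly and $A_2$ is its image under $S\to-S$, so $\ell_{A_1A_2}$ has coefficients polynomial in $a,b,c$ once $S^2$ is reduced by Heron.

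It then remains only to verify that $X_8=[\,b+c-a:c+a-b:a+b-c\,]$ annihilates $\ell_{A_1A_2}$, a single substitution. Since the construction is cyclic --- the relabeling $(a,b,c,A,B,C)\mapsto(b,c,a,B,C,A)$ carries $\H_b\cap\H_c$ to $\H_c\cap\H_a$ and hence $\ell_{A_1A_2}\mapsto\ell_{B_1B_2}\mapsto\ell_{C_1C_2}$, while fixing the triangle center $X_8$ --- this one incidence forces $X_8\in\ell_{B_1B_2}$ and $X_8\in\ell_{C_1C_2}$ as well, so the three chords concur at $X_8$. The main obstacle is the middle step: separating $A_1,A_2$ from the algebraically unpleasant Soddy points, whose own coordinates carry $S$ and nested radicals. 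The point of the pencil/Soddy-line factoring is precisely to avoid closed forms for $X_{175},X_{176}$: one needs only their collinearity on the known line $\ell_S$, so $\ell_{A_1A_2}$ is obtained by a polynomial division rather than by solving the quartic, after which the $X_8$ check should collapse once $S^2$ is cleared.
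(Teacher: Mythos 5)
Your proposal is sound, and its key middle step takes a genuinely different route from the paper. The paper's proof is a direct computation: it solves $\H_b\cap\H_c$ in barycentrics, obtaining $A_1$ with a radical entering linearly and $A_2$ by flipping that radical's sign, writes down the resulting (radical-free) chord $A_1A_2$, and verifies that $X_8=[b+c-a:c+a-b:a+b-c]$ satisfies it, ``and cyclically.'' (One inaccuracy in your write-up: the radical there is $\sqrt{3a^2+2ab-b^2+2ac-2bc-c^2}$, not the area $S$; this is immaterial to your argument, which only uses that the two points are conjugate under the sign flip.) What you do differently is to avoid solving for the points at all: your sign bookkeeping --- matched signs force $|PB|-|PC|=\pm(b-c)$, hence points of $\H_a$, which by the preceding proposition are the Soddy centers $X_{175},X_{176}$; mixed signs force $|PB|-|PC|=\pm(2a-b-c)$, giving two points that lie on a hyperbola confocal with $\H_a$ and, for a scalene triangle, cannot lie on $\H_a$ --- splits the four base points of the pencil $s\,\H_b+t\,\H_c$ into the Soddy pair and $\{A_1,A_2\}$, after which the degenerate pencil member containing the (known) Soddy line $\ell_S=X_1X_7$ factors as $\ell_S\cdot\ell_{A_1A_2}$, and $\ell_{A_1A_2}$ falls out of one linear condition plus a polynomial division, radical-free. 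This buys a cleaner computation and the bonus fact that $A_1,A_2$ lie on the confocal hyperbola with constant $|2a-b-c|$ (which the paper never makes explicit), at the price of two external inputs (the previous proposition and the equation of $\ell_S$) and a genericity assumption: you should state that the argument as given needs the triangle scalene and the four base points distinct (note $A_1,A_2$ can be complex conjugates, e.g.\ when $b+c>3a$, in which case the chord is still a real line), with the general case following because the concurrency is a polynomial identity in $a,b,c$ and so extends from an open set of triangles. Your closing reduction to a single incidence check via the cyclic relabeling that fixes $X_8$ is correct and is exactly what the paper's ``cyclically'' implicitly does.
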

\begin{proof}
Let $a,b,c$ denote the sidelines. The barycentrics of $A_1$ are given by:
\begin{align*}
    A_1: \left[ \right. &(L-2a) (3 a^2 + 2 a b - b^2 + 2 a c - 2 b c - c^2 + 2 (b - c) \gamma), \\
   & (L-2c) (3 a - 3 b + c) L + 2 (a^2 - c^2 - 2 b^2 + a b - b c ) \gamma, \\
    & (L-2b) (3 a + b - 3 c)L  - 2 (a^2 - b^2 - 2 c^2 + a c + b c ) \gamma \left. \right  ]
\end{align*}
where $L=a+b+c$ and $\gamma = 3 a^2 + 2 a b - b^2 + 2 a c - 2 b c - c^2$. The barycentrics of $A_2$ are obtained by replacing $\gamma$ with $-\gamma$. The barycentrics of points on $A_1 A_2$ satisfy:
\[ (-2 a^2 + a b + b^2 + a c - 2 b c + c^2) x + (a - c) (L-2a) y + (a - b) (L-2a) z = 0 \]
and cyclically for $B_1 B_2$ and $C_1 C_2$.
It can be shown that the 3 lines pass through $X_8$, whose barycentric coordinates are $[b+c-a,a+c-b, a+b-c]$, see \cite{etc}.
\end{proof}

\begin{figure}
\centering
\includegraphics[trim=200 0 0 0,clip,width=.8\textwidth,frame]{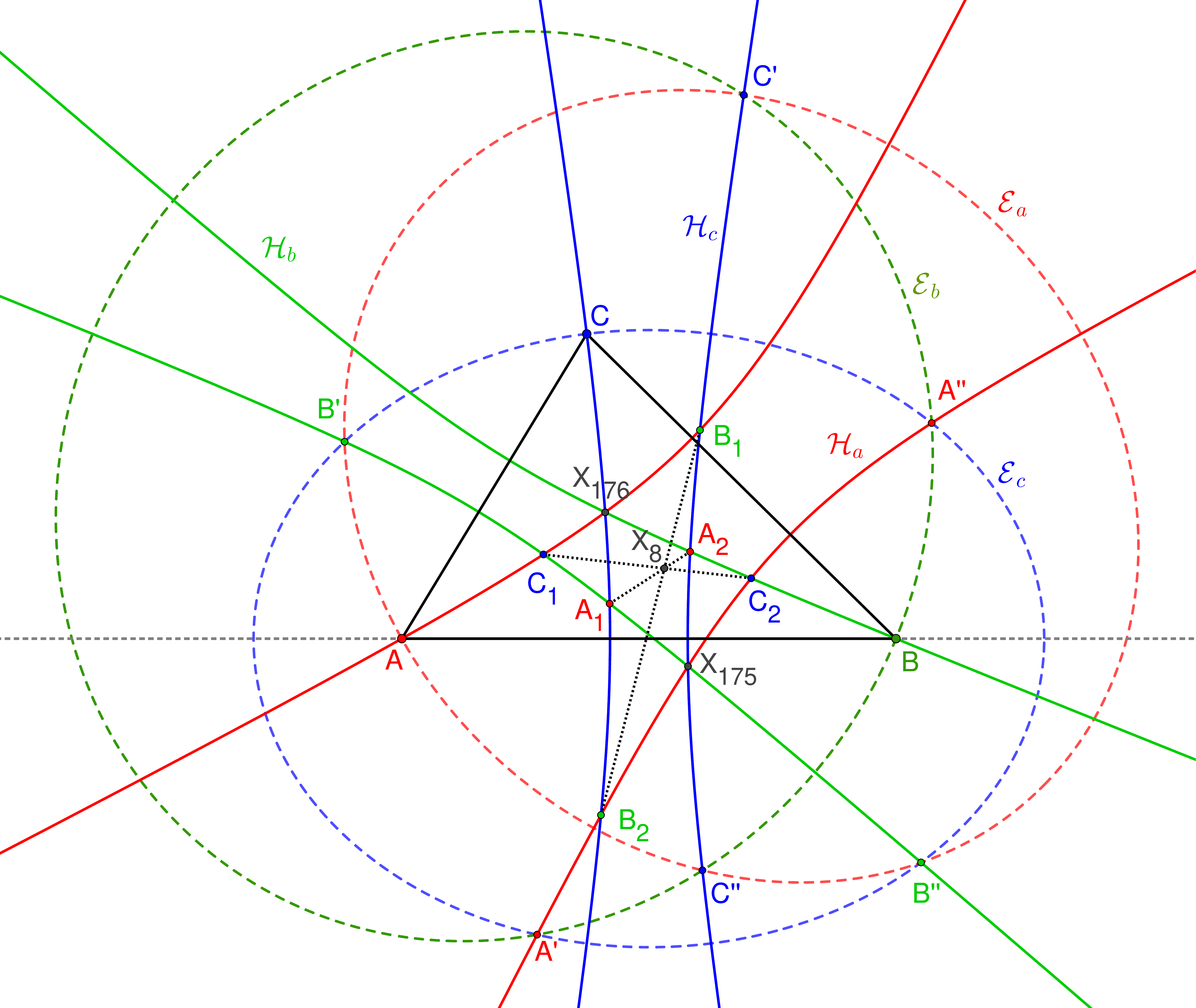}
\caption{Each V-hyperbola passes through the two intersections between pairs of V-ellipses. As an example, consider $A''$, common to $\H_a$, $\E_b$, and $\E_c$. Also shown is the fact that the 3 segments $A_1A_2$, $B_1 B_2$, and $C_1C_2$ connecting opposing 2-branch intersections of the 3 V-hyperbolas concur at $X_8$.}
\label{fig:vhyps-vells}
\end{figure}

Referring to \cref{fig:vcubic}:

\begin{remark}
On side $BC$ there lie the 2 vertices of $\E_a$ and the 2 of $\H_a$. Consider the degenerate cubic which is the union of the sidelines of $\triangle ABC$. It is a 15-point cubic since it passes through (i) the three vertices of the triangle, (ii) the 6 vertices of the V-ellipses, and (iii) the 6 vertices of the V-hyperbolas.
\end{remark}

\begin{figure}
\centering
\includegraphics[trim=0 0 0 0,clip,width=.6\textwidth]{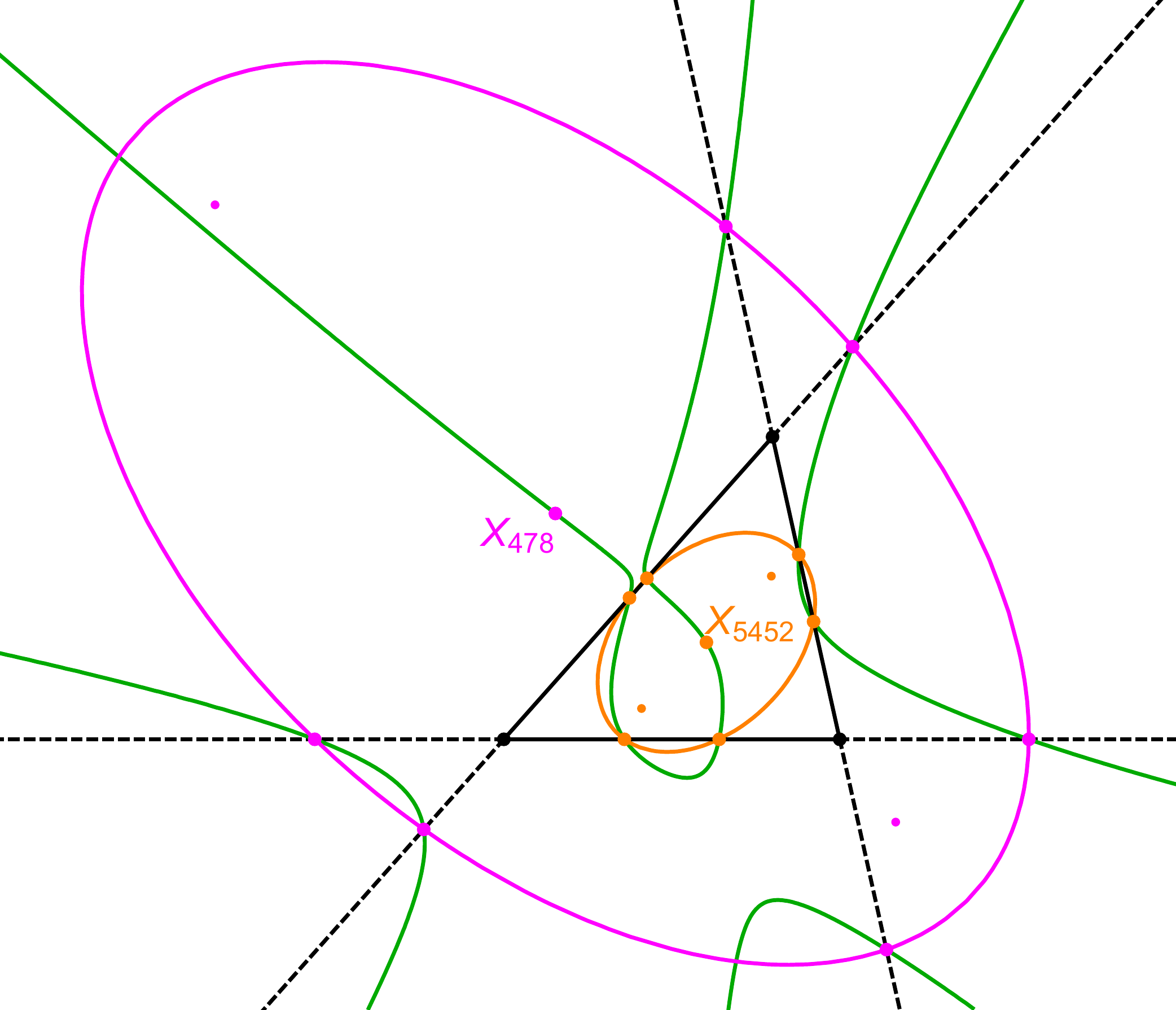}
\caption{Given a triangle (black), the union of the 3 sidelines (dashed black) can be regarded as a 15-point degenerate cubic. It passes through (i) the triangle vertices, (ii) the 6 points on the Yiu conic (magenta), and (iii) the 6 points on the Privalov conic (orange). Just for fun, also shown are branches of the 14-point quartic (green) that passes through the 12 points on the Yiu+Privalov as well as their centers $X_{478}$ and $X_{5452}$, respectively.}
\label{fig:vcubic}
\end{figure}

Referring to \cref{fig:half-tangs}:

\begin{proposition}
When the external Soddy circle degenerates to a line, the three circles whose diameters are the sides of $\triangle{ABC}$ are also tangent to it.
\end{proposition}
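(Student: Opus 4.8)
The plan is to reduce the whole statement to one linear fact about signed distances to a line. First I would recall, exactly as in the proof of the Soddy-intersection proposition above, that the three mutually tangent ``kissing'' circles $\mathcal{C}_a,\mathcal{C}_b,\mathcal{C}_c$ centered at $A,B,C$ have radii $r_a=p-a$, $r_b=p-b$, $r_c=p-c$, obtained by solving $r_a+r_b=c$, $r_b+r_c=a$, $r_c+r_a=b$. When the outer Soddy circle degenerates to the Soddy line $\ell$, it is tangent to all three of these circles; moreover, since this line arises as the limit of outer Soddy circles that \emph{contain} the three kissing circles, all three circles lie on the same side of $\ell$. Fixing the unit normal $\vec n$ of $\ell$ to point toward that side, the signed distances from the centers then satisfy $d_A=p-a$, $d_B=p-b$, $d_C=p-c$, all positive.

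The key observation is that the signed distance to a fixed line is an affine function of the point: writing $\ell=\{X:\vec n\cdot X=t\}$ with $|\vec n|=1$, the signed distance of $X$ is $\vec n\cdot X-t$. Hence for the midpoint $M_a=(B+C)/2$ of side $BC$ one gets $\vec n\cdot M_a-t=\tfrac12(d_B+d_C)=\tfrac12\big((p-b)+(p-c)\big)=\tfrac12(2p-b-c)=\tfrac{a}{2}$. Since the circle erected on $BC$ as diameter has center $M_a$ and radius $a/2$, its center lies at distance exactly $a/2$ from $\ell$, so $\ell$ is tangent to it. Applying the same computation cyclically to $M_b,M_c$ gives $\tfrac12(d_C+d_A)=b/2$ and $\tfrac12(d_A+d_B)=c/2$, so $\ell$ is tangent to all three side-diameter circles.

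The only delicate point, and the step I would write most carefully, is the sign bookkeeping in the first paragraph: one must argue that the three kissing circles genuinely lie on the \emph{same} side of $\ell$, so that $d_A,d_B,d_C$ carry a common sign, which is precisely what makes the averages collapse to $a/2,b/2,c/2$ rather than to some expression involving differences. This I would justify by realizing $\ell$ as the $r\to\infty$ limit of outer Soddy circles internally tangent to and containing the kissing circles, in which limit the common interior degenerates to a single half-plane bounded by $\ell$. Everything else is the one-line affine/midpoint identity, so no heavy computation is required.
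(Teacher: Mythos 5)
Your proof is correct, and it takes a genuinely different route from the paper's. The paper argues synthetically: it takes the tangency point $T$ of $\mathcal{C}_a$ and $\mathcal{C}_b$, intersects the common tangent at $T$ with the Soddy line $\mathcal{L}$ at a point $M$, uses equal tangent lengths $|MT|=|MA_T|=|MB_T|$ and an angle chase to get $\angle{A_T T B_T}=90^{\circ}$ and $\angle{AMB}=90^{\circ}$, and concludes via the trapezium $[ABB_TA_T]$, whose midline $OM$ is perpendicular to $\mathcal{L}$ with $|OM|=|AB|/2$; a side benefit is that the tangency point is located explicitly as the midpoint of $A_TB_T$. Your route instead exploits the radii $r_a=p-a$, $r_b=p-b$, $r_c=p-c$ of the kissing circles together with the affinity of the signed distance to $\mathcal{L}$, collapsing the whole statement to the identity $(p-b)+(p-c)=a$; this is shorter and more mechanical, and it too identifies the tangency point (the foot of the perpendicular from the side's midpoint). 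The one point you rightly flag --- that $A$, $B$, $C$ must lie on the same side of $\mathcal{L}$ so the signed distances share a sign --- can be discharged even more directly than by your limit argument: by the paper's definition, the Soddy circles are the Apollonius solutions with the \emph{same tangency type} with respect to all three kissing circles, and for a line this means exactly that the three circles lie in one closed half-plane. Note that the paper's synthetic proof silently assumes the analogous configuration facts ($AA_T$ and $BB_T$ parallel and on the same side of $AB$, so that $[ABB_TA_T]$ is a trapezium), so your version arguably makes the orientation hypotheses more explicit rather than less.
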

\begin{proof}

Let $A_T,$ $B_T$ be the tangency points of the degenerate Soddy circle $\L$ (a line) with circles $\C_a,\C_b$,
and let $T$ be the tangency point between the latter two.
The perpendicular dropped from $T$ onto line $AB$ meets $\L$ at $M$. Then, owing to properties of tangents from a points to a circle, $|M T|=|M A_T|=|M B_T|$.  Since $B_T,M,A_T$ are collinear, then $\angle{A_T T B_T}=90^{\circ}$. On the other hand, since $MT$ and $MA_T$
are tangents from $M$ to $\C_a$, $MA\perp TA_T$
and similarly, $MB\perp TB_T$. $\angle{A M B}=90^{\circ}$. Hence, if $O$ is the midpoint of $AB$ then $OM=AO=OB$. Finally, the quadrilateral $[A B B_T A_T]$
is a trapezium ($AA_T$, $BB_T$ are perpendicular to $\L$) and $OM$ is its mid-base. Hence $OM$ is also perpendicular to $\L$ at the midpoint $M$ of $A_T B_T$. Therefore the circle of diameter $AB$ is tangent to $\L$ at $M$, and so on cyclically for  $(\C_b,\C_c)$ and $(\C_a,\C_c)$.
\end{proof}

\section{A triad of P-hyperbolas}
\label{sec:p-hyp}
We now extend V-hyperbolas to a trio with respect to a point $P$. Referring to \cref{fig:p-hyp}:

\begin{definition}[P-hyperbolas]
A triad of P-hyperbolas $\H_a^*,\H_b^*,\H_c^*$ with respect to $\triangle ABC$ have foci on $(B,C)$, $(C,A)$, $(A,B)$ and pass through a given point $P$.
\end{definition}

\begin{figure}
\centering
\includegraphics[width=.8\textwidth,frame]{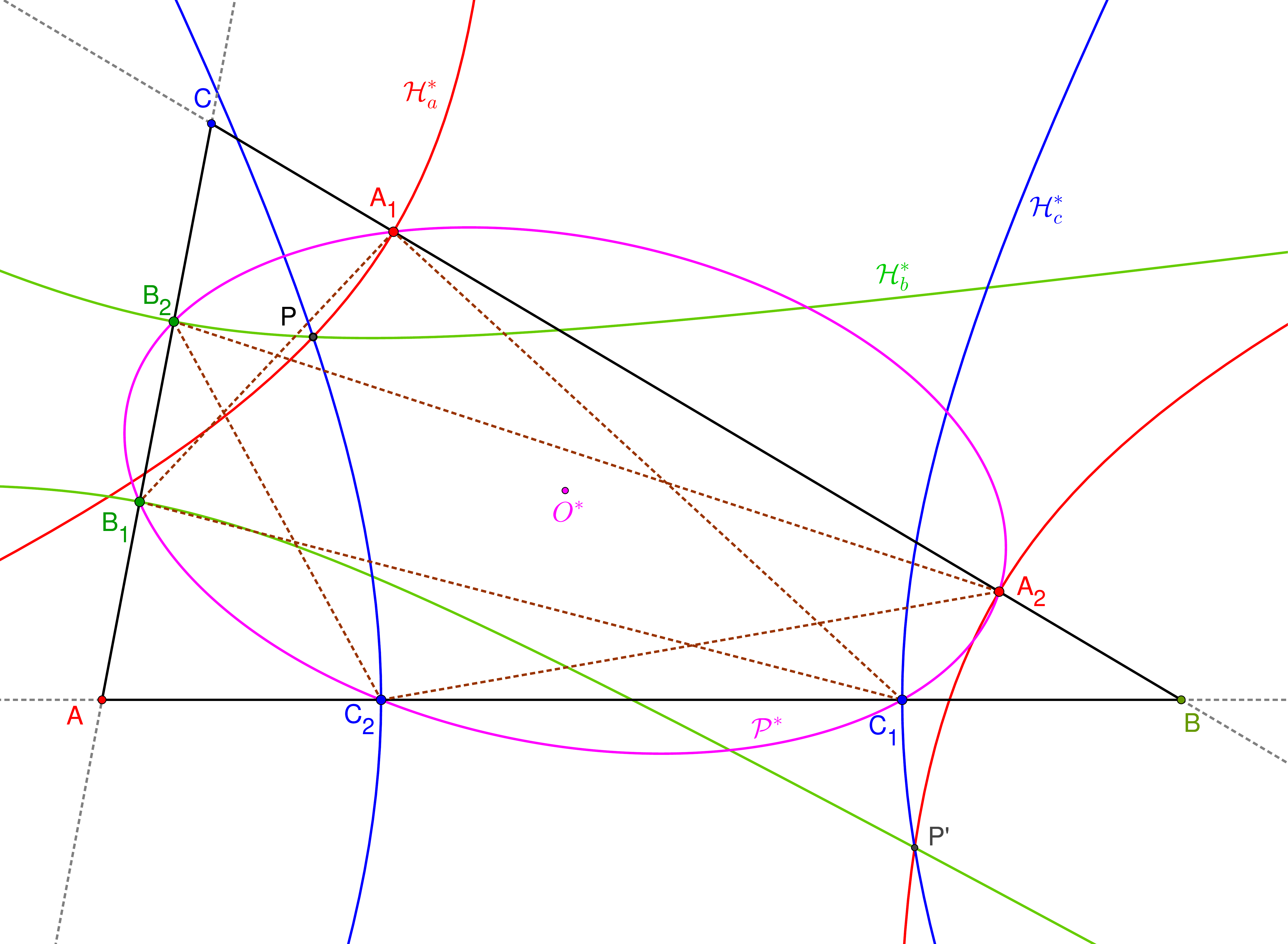}
\caption{Given a point $P$, (i) the triad of P-hyperbolas $\H_a^*,\H_b^*,\H_c^*$ has a second common point $P'$; (ii) their vertices $A_1,A_2,B_1,B_2,C_1,C_2$ lie on a conic $\P^*$ (magenta), not necessarily an ellipse; (iii) $\triangle A_1 B_1 C_1$ has the same area as $\triangle A_2 B_2 C_2$.}
\label{fig:p-hyp}
\end{figure}

Still referring to \cref{fig:p-hyp}:

\begin{proposition}
Besides $P$, the triad of P-hyperbolas meets at a second real point $P'$.
\end{proposition}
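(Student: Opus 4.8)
The plan is to reduce the three–conic statement to a two–conic intersection using the bifocal description, and then to exploit the fact that $\H_a^*$ and $\H_b^*$ share the focus $C$. Set $d_a=|PB|-|PC|$, $d_b=|PC|-|PA|$, $d_c=|PA|-|PB|$, so that $d_a+d_b+d_c=0$ and the three P-hyperbolas are the full conics $\{X:\,||XB|-|XC||=|d_a|\}$, $\{X:\,||XC|-|XA||=|d_b|\}$, $\{X:\,||XA|-|XB||=|d_c|\}$. I would first show that it is enough to produce one further common point of $\H_a^*$ and $\H_b^*$: any such point $X$ has the pair $(|XB|-|XC|,\,|XC|-|XA|)$ equal to either $(d_a,d_b)$ or $(-d_a,-d_b)$, and in both cases $|XB|-|XA|=\pm(d_a+d_b)=\mp d_c$, whence $||XA|-|XB||=|d_c|$ and $X\in\H_c^*$. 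The nontrivial point, that the two signs flip \emph{together} (never one without the other), is exactly the shared-focus phenomenon treated next.

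Next I would place the shared focus $C$ at the origin and write each conic in focus form $x^2+y^2=g_i^2$, where $g_a,g_b$ are the affine functions $g_i=L_i-e_i(x\cos\phi_i+y\sin\phi_i)$ coming from the focus–directrix equations of $\H_a^*,\H_b^*$. Subtracting gives $g_a^2-g_b^2=0$, so the common locus lies on the union of the two \emph{real} lines $g_a-g_b=0$ and $g_a+g_b=0$. On either of these lines $g_a=\pm g_b$, so $|XC|=|g_a|=|g_b|$ forces the two signed focal radii to be simultaneously $+|XC|$ or simultaneously $-|XC|$; this is precisely the ``both-or-neither'' branch flip invoked above. The given point $P$ lies on one of these two lines, say $\ell$. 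Moreover any point of $\H_a^*\cap\ell$ automatically satisfies $\H_b^*$, since on $\ell$ the relation $g_a^2=g_b^2$ together with $x^2+y^2=g_a^2$ yields $x^2+y^2=g_b^2$.

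Finally I would extract the second point and check reality. The line $\ell$ is real and passes through the real point $P$ of the conic $\H_a^*$; substituting a parametrization of $\ell$ into the quadratic defining $\H_a^*$ gives a real quadratic with one real root $P$, hence a second real root $P'$ (complex roots occur in conjugate pairs). By the second paragraph $P'\in\H_b^*$, and by the first paragraph $P'\in\H_c^*$, so $P'$ is common to all three P-hyperbolas. The main obstacle lies in this last stage, in the bookkeeping of branches and of reality: one must rule out the degenerate possibilities that $\ell$ is tangent to $\H_a^*$ at $P$ (which would force $P'=P$) or parallel to an asymptote of $\H_a^*$ (which would send $P'$ to infinity), and then confirm that the surviving $P'$ is a genuine affine point. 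The shared-focus factorization is what guarantees that whenever $P'$ is finite it sits on the correct branches, so that the reduction to $\H_c^*$ goes through.
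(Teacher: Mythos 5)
Your argument is sound and reaches the result by a genuinely different mechanism than the paper's. The paper dualizes: it takes the pair $(\H_b^*,\H_c^*)$ with common focus $A$ and performs a polar duality in a circle centered at $A$, under which the two hyperbolas become circles and intersection points become common tangents; since the polar of $P$ is one common tangent, a second common tangent through the same similitude center exists, and its pole is $P'$, lying on the two branches opposite to those through $P$; the focal-difference bookkeeping ($P'C-P'A=B_1B_2$, $P'A-P'B=C_1C_2$, combined with $B_1B_2+C_1C_2=A_1A_2$, which is extracted from the three relations satisfied by $P$) then places $P'$ on the third hyperbola. You instead take $(\H_a^*,\H_b^*)$ with common focus $C$, write both in the focal form $x^2+y^2=g_i^2$ with $g_i$ affine, and factor the common locus into the two real lines $g_a - g_b=0$ and $g_a+g_b=0$; the line $\ell$ through $P$ re-intersects $\H_a^*$ in a point that is automatically on $\H_b^*$, and your sign bookkeeping plays exactly the role of the paper's branch arithmetic. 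Your route buys reality of $P'$ cheaply (a real quadratic with one real root has two) and avoids the duality machinery; the paper's route buys the geometric identification of $P$ and $P'$ as poles of the two common tangents, which pins down on which branches $P'$ sits. Two points to tighten. First, your claim that on either of the two lines the signed focal radii agree is imprecise: they agree on $g_a-g_b=0$ and are opposite on $g_a+g_b=0$; what your proof actually needs---and what the factorization does deliver---is that the sign correlation between $g_a$ and $g_b$ is \emph{constant along the line containing $P$}, so the branch pattern at $P'$ is either that of $P$ or its total flip, and in either case $P'\in\H_c^*$. Second, you flag but do not dispose of the degenerate cases ($\ell$ tangent to $\H_a^*$ at $P$, or parallel to an asymptote so that $P'$ escapes to infinity). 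Note that the paper's proof has the same lacuna (its assertion that the reciprocal circles must be externally tangent or secant silently excludes internal tangency), and the phenomenon is genuine: already for V-hyperbolas the second intersection goes to infinity when the outer Soddy circle degenerates to a line (\cref{fig:soddy3}), so the proposition should be read as a statement about generic $P$, under either proof.
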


\begin{proof}
Let $\H_a^{*+}$, $\H_b^{*+}$, $\H_c^{*+}$ denote the three branches that pass through $P$. We need to prove that the other three branches
$\H_a^{*-}$, $\H_b^{*-}$, $\H_c^{*-}$ also meet at some point.

First, let us show that  two other branches $\H_b^{*-}$ and $\H_c^{*-}$ must intersect. To prove it,  we perform a polar dual with respect to a circle centered at their common focus $A$, as shown in 
\cref{fig:polar-proof}.
 The polar dual of each hyperbola will be a circle, whose diameter is delimited by the inverses of hyperbola vertices. By polarity, the intersection points of the original hyperbolas are sent to the common tangents of their reciprocal circles and vice-versa; since, by hypothesis, $\H_b^{*}$ and $\H_c^{*}$ intersect at a point $P$, these reciprocal circles admit (at least) one common tangent. Hence, they are either externally tangent or secant. Therefore, these circles admit at least two  common tangents. One of these tangents is precisely the polar of $P$; the other one, passing through the same homothety center, 
 is the polar of a point $P'$
which is the intersection of the other two branches,  $\H_b^{*-}$ and  $\H_c^{*-}$. Similarly, branches $\H_a^{*-}$ and  $\H_c^{*-}$ also intersect. Now,
as in \cref{fig:p-hyp},  if a point  $P'\in \H_b^{*-}\bigcap \H_c^{*-}$, then it satisfies  %
$P'C-P'A=B_1B_2$
and $P'A-P'B=C_1 C_2$. Hence, by adding these two relations, we obtain 
$P'C-P'B=B_1B_2+C_1C_2.$ Nevertheless, by hypothesis
$P$ is the common point of three branches: $\H_a^{*+}, \H_b^{*+},\H_c^{*+}.$ Then three similar relations can be written for $P:$
$PA-PC=B_1B_2$, $PB-PA=C_1C_2$, and $PB-PC=A_1A_2.$
By adding the first two, we obtain 
$PB-PC=B_1B_2+C_1C_2,$ hence $B_1B_2+C_1C_2=A_1A_2.$
The later relation ensures that $P'C-P'B=A_1A_2,$ hence $P'\in \H_a^{*-}$
finishing the proof.

\end{proof}

\begin{figure}
\centering
\includegraphics[trim=0 0 0 0,width=.6\textwidth]{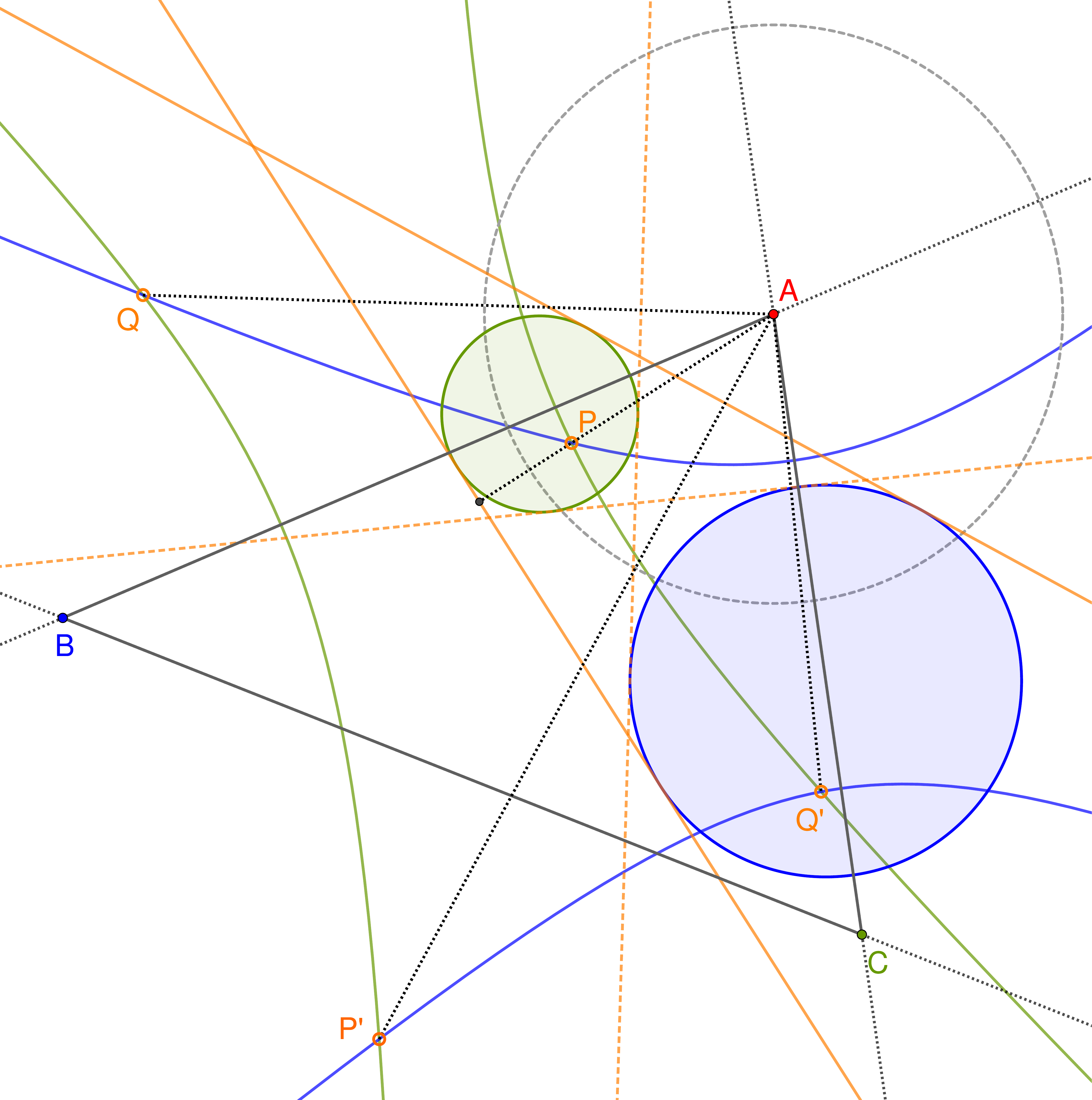}
\caption{Two P-hyperbolas (green, blue) sharing a focus at $A$ are shown as well as their reciprocals (shaded green and blue circles) with respect to an inversion circle centered at $A$ (dashed black). The branches closer (resp. further) to $A$ always intersect; their intersection points $P$ and $P'$ are the poles of the common external tangents to their reciprocal circles.  When these circles are disjoint (as in the figure), the poles of the internal common tangents are intersections between alternate branches.}
\label{fig:polar-proof}
\end{figure}

\begin{proposition}
The 6 vertices of the 3 P-hyperbolas lie on a conic $\P^*$.
\end{proposition}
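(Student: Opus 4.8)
The plan is to reduce this to the converse of Carnot's theorem, exactly as in the proof of \cref{prop:yiu3} and of the analogous statement for P-ellipses; the only new observation is that the argument is insensitive to whether the six conics are ellipses or hyperbolas.

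First I would record that every central conic meets its focal axis in two vertices placed symmetrically about its center, and that this center is the midpoint of the two foci. Since $\H_a^*$ has foci $B,C$, its center is the midpoint of side $BC$, so its vertices $A_1,A_2$ lie on line $BC$ symmetric about that midpoint; cyclically, $B_1,B_2$ lie on $CA$ symmetric about the midpoint of $CA$, and $C_1,C_2$ lie on $AB$ symmetric about the midpoint of $AB$. This symmetry holds verbatim for a hyperbola, since its two vertices are again the intersections of the focal axis with the curve and are equidistant from the center; hence nothing in the ellipse argument is lost.

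Next I would convert each such midpoint symmetry into the length identities it forces. Reflection through the midpoint of $BC$ swaps $B\leftrightarrow C$ and $A_1\leftrightarrow A_2$, giving $BA_1=CA_2$ and $BA_2=CA_1$, and cyclically $AC_2=BC_1$, $CB_1=B_2A$, and so on, exactly the relations used for the P-ellipses. Substituting these into the Carnot product of \cref{eq:carnot}, with the six tangency points there replaced by the six hyperbola vertices, the three pairs of factors collapse and the product equals $1$. By the converse of Carnot's theorem the six vertices therefore lie on a single conic $\P^*$.

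I do not anticipate a genuine obstacle: the cancellation is identical to the V- and P-ellipse cases, and the one conceptual point -- that the vertex/center symmetry is a feature of central conics regardless of type -- is immediate. The only items meriting a sentence of care are the bookkeeping of the ratio convention (matching that of \cref{prop:yiu3}) and, should one also want $\P^*$ to be a genuine nondegenerate conic for generic $P$, a separate nondegeneracy check; but the stated conclusion, that the six points lie on some conic, follows at once from Carnot.
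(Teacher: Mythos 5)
Your proof is correct and follows the same route as the paper: both use the fact that the center of $\H_a^*$ is the midpoint of $BC$ (and cyclically), deduce the resulting equal-length relations among the six vertices, and conclude via Carnot's theorem exactly as in \cref{prop:yiu3}. Your explicit remark that the vertex--center symmetry holds for hyperbolas just as for ellipses is the one point the paper leaves implicit, but it is the same argument.
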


\begin{proof}
Referring to \cref{fig:p-hyp}, by definition, the center of the P-hyperbola $\H_a^*$ is at the midpoint of $BC$, and so on cyclically. Hence:
\begin{equation}
|A_1 C|=|A_2B|=x,\;\;\;|B_1 A|=|B_2 C|=y,\;\;\;|BC_1|=|AC_2|=z.
\label{eq:p-Carnot}
\end{equation}
We obtain the claim using Carnot's theorem.
\end{proof}

Recall the classic result that for any triangle, the intouch and extouch triangles have the same area (we saw this in \cref{prop:extouch-intouch} in the context of V-hyperbolas). The analogous result for P-hyperbolas still holds:

\begin{proposition}
Let $A_1,A_2$ denote the vertices of $\H_a^*$, and $B_1,B_2$, $C_1,C_2$, those of $\H_b^*$ and $\H_c*$, respectively. Then $\triangle A_1 B_1 C_1$ and $\triangle A_2 B_2 C_2$ have the same area.
\end{proposition}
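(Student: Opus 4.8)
The plan is to exploit the fact, already used in the proof that the six vertices lie on $\P^*$, that each P-hyperbola is centered at the midpoint of the side joining its two foci. For $\H_a^*$ the focal axis is the line $BC$ and the center is the midpoint of $BC$; since the two vertices of a conic are symmetric about its center and lie on the focal axis, $A_1$ and $A_2$ are a pair of points on line $BC$ that are reflections of each other about the midpoint of $BC$ (this is exactly the content of the relation $|A_1 C|=|A_2 B|$ recorded in \cref{eq:p-Carnot}). The same holds cyclically: $B_2$ is the reflection of $B_1$ about the midpoint of $CA$, and $C_2$ the reflection of $C_1$ about the midpoint of $AB$. Thus, passing from $\triangle A_1 B_1 C_1$ to $\triangle A_2 B_2 C_2$ is the single geometric operation of reflecting each vertex about the midpoint of the side on which it sits. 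Note this argument never invokes $P$ itself, and it specializes to the classical intouch/extouch equal-area fact recalled before \cref{prop:extouch-intouch}.

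First I would parametrize the vertices affinely. Writing $A_1=(1-t_a)B+t_a C$, the reflection about $(B+C)/2$ gives $A_2=t_a B+(1-t_a)C$; likewise $B_1=(1-t_b)C+t_b A$, $B_2=t_b C+(1-t_b)A$, and $C_1=(1-t_c)A+t_c B$, $C_2=t_c A+(1-t_c)B$. In normalized barycentrics with respect to $(A,B,C)$ the first triple reads $A_1=(0,\,1-t_a,\,t_a)$, $B_1=(t_b,\,0,\,1-t_b)$, $C_1=(1-t_c,\,t_c,\,0)$, and the second triple is obtained by the substitution $t_i\mapsto 1-t_i$ throughout.

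Next I would compute the signed area via the standard determinant, using that for points given in normalized barycentrics the ratio $[\triangle A_1B_1C_1]/[\triangle ABC]$ equals the $3\times 3$ determinant of the coordinate matrix. A direct expansion gives
\[
\det\begin{pmatrix} 0 & 1-t_a & t_a \\ t_b & 0 & 1-t_b \\ 1-t_c & t_c & 0 \end{pmatrix}
= t_a t_b t_c + (1-t_a)(1-t_b)(1-t_c),
\]
and the corresponding determinant for $\triangle A_2 B_2 C_2$ (its rows obtained by $t_i\mapsto 1-t_i$) evaluates to $(1-t_a)(1-t_b)(1-t_c)+t_a t_b t_c$, the identical expression. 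Hence the two signed area ratios coincide, and in particular $[\triangle A_1 B_1 C_1]=[\triangle A_2 B_2 C_2]$.

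There is no real analytic obstacle here: the whole statement collapses once one sees that the relevant area ratio is the symmetric polynomial $t_a t_b t_c+(1-t_a)(1-t_b)(1-t_c)$, which is manifestly invariant under the simultaneous swap $t_i\leftrightarrow 1-t_i$ that implements the central reflections. The only points needing care are (i) justifying that both vertices of each hyperbola really are reflections about the side's midpoint — immediate from the center being that midpoint — and (ii) keeping orientation bookkeeping consistent so that the determinants are compared correctly; since we ultimately want unsigned areas, even a global sign discrepancy would be harmless, but in fact the signed ratios are literally equal.
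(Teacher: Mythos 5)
Your proof is correct and follows essentially the same route as the paper's: both hinge on the vertices of each P-hyperbola being symmetric about the midpoint of the side joining its foci, so that passing from $\triangle A_1B_1C_1$ to $\triangle A_2B_2C_2$ amounts to the substitution $t_i \mapsto 1-t_i$ in an affine parametrization of the vertices along the sides, and both reduce the claim to the invariance of the same symmetric polynomial $1-\sum t_i+\sum_{i\neq j} t_i t_j$ under that swap. The only difference is bookkeeping: the paper reaches this polynomial by subtracting the three corner triangles (computed via cross products) from $\triangle ABC$, whereas you get it in one step as a $3\times 3$ barycentric determinant --- a marginally more compact path to the identical expression.
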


\begin{proof} 
This is again a consequence of  \cref{eq:p-Carnot}. Specifically, let $\vec a=  \vv{BC}$, $\vec b=  \vv{CA}$,
$\vec c=  \vv{AB}$. Let
$\alpha$, $\beta$, and $\gamma$ be such that:

\[    \vv{BA_1}=\alpha\vec{a},   \;\;\;    \vv{CB_1}=\beta\vec{b}, \;\;\;
\vv{AC_1}=\gamma\vec{c}; \]

In order to prove that $S_{A_1B_1C_1}=S_{A_2B_2C_2}$, we simply show that they represent the same fraction of $S=S_{ABC}$.
In fact, $S_{A_1B_1C_1}=S-[S_A+S_B+S_C]$, 
where $S_A=S_{AB_1C_1}$, $S_B=S_{BA_1C_1}$, and $S_C=S_{CA_1B_1}$.
A direct computation yields:
\[ S_A=S_{AB_1C_1}=\frac{1}{2}\|\vv{AB_1}\,\times\,\vv{AC_1}\|=
\frac{1}{2}\|{(1-\beta)\vec{b}}\times(\gamma \vec{c})\|=\gamma(1-\beta)S \]
Cyclically, $S_B=\alpha(1-\gamma)S$, and $S_C=\beta(1-\alpha)S$. Therefore:
\[S_{A_1 B_1 C_1}=S-\big[S_A+S_B+S_C \big]= 
S\big[1- \gamma(1-\beta)- \alpha(1-\gamma)-\beta(1-\alpha)\big]\]   
Similarly:
\[S_{A_2 B_2 C_2}=S-\big[S'_A+S'_B+S'_C]    \]
where $S'_A=S_{A B_2 C_2},$ $S'_B=S_{B A_2 C_2},$ $S'_C=S_{C A_2 B_2}$.
Then:
\[S'_A=S_{A B_2 C_2}=\frac{1}{2}\|\vv{A B_2}\,\times\,\vv{A C_2}\|=
\frac{1}{2}\|{\beta \vec{b}}\times((1-\gamma) \vec{c})\|=\beta(1-\gamma)S.
\] and cyclically for $S'_B$, $S'_C.$ Thus,
the area of $S_{A_2B_2C_2}$ can be computed as $S_{A_1 B_1 C_1}$, where $\alpha,\beta,\gamma$ are replaced at each occurrence by $(1-\alpha)$, $(1-\beta)$, $(1-\gamma)$.
Thus:
\[S_{A_2 B_2 C_2}=S-\big[S'_A+S'_B+S'_C \big]= 
S\big[1- (1-\gamma)\beta- (1-\alpha)\gamma-\alpha(1-\beta)\big]\]
Hence, the two areas are equal. \end{proof}

Referring to \cref{fig:phyp-circular}:

\begin{proposition}
Given a $\triangle ABC$ there is a unique pair of distinct points $P^*$ and $Q^*$ such that the 6-point conic $\P^*$ is a circle. These are a pair of common intersections of the triad of P-hyperbolas. It can be shown their barycentrics satisfy:
{\small
\begin{align*}
& \left[(c^2-\lambda_c^2) (-a^2+b^2+\lambda_a^2-\lambda_b^2)\right]^2+\\
& \left[(b^2-\lambda_b^2)(-a^2+c^2+\lambda_a^2-\lambda_c^2)\right]^2+\\
&\left[(a^2-\lambda_a^2)(-b^2+c^2+\lambda_b^2-\lambda_c^2)\right]^2 = 0  
\end{align*}
}
where  $\lambda_a = |PB|-|PC|$,  $\lambda_b=|PC|-|PA|$, and $\lambda_c=|PA|-|PB|$.
\end{proposition}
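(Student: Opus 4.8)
The plan is to compute $\P^*$ explicitly in barycentric coordinates and then impose the classical criterion for a conic to be a circle. First I would record the vertices of the three P‑hyperbolas. Since $\H_a^*$ is centered at the midpoint of $BC$ and its two vertices lie on line $BC$ at distance $|\lambda_a|/2$ from that midpoint, they are $A_1=[0,\,a-\lambda_a,\,a+\lambda_a]$ and $A_2=[0,\,a+\lambda_a,\,a-\lambda_a]$, with $B_1,B_2$ and $C_1,C_2$ obtained cyclically. Writing an unknown conic as $f x^2+g y^2+h z^2+2p' yz+2q' zx+2r' xy=0$ and intersecting it with each sideline $x=0$, $y=0$, $z=0$, I force the two roots on each line to be the prescribed vertex pairs. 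On every sideline the product of the two root‑ratios equals $1$, which immediately gives $f=g=h$; matching the sums of the root‑ratios pins the cross terms, yielding
\[
\P^*:\ x^2+y^2+z^2-2\,\frac{a^2+\lambda_a^2}{a^2-\lambda_a^2}\,yz-2\,\frac{b^2+\lambda_b^2}{b^2-\lambda_b^2}\,zx-2\,\frac{c^2+\lambda_c^2}{c^2-\lambda_c^2}\,xy=0 .
\]
Because both roots are matched on all three sidelines, this is automatically the six‑point conic.

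Next I would apply the circle criterion: a conic is a circle iff it is a combination of the circumcircle $a^2yz+b^2zx+c^2xy$ with a term $(x+y+z)\ell$, equivalently iff $(g+h-2p')/a^2=(h+f-2q')/b^2=(f+g-2r')/c^2$. Setting $d_a=a^2-\lambda_a^2$ (and $d_b,d_c$ cyclically) and clearing denominators, the three quantities become proportional to $d_bd_c$, $d_cd_a$, $d_ad_b$, respectively, using $(a^2-\lambda_a^2)+(a^2+\lambda_a^2)=2a^2$. Hence the circle condition collapses to $d_bd_c=d_cd_a=d_ad_b$, i.e.\ (as long as no $d$ vanishes, which occurs only when $P$ lies on a sideline) to the strikingly simple system $d_a=d_b=d_c$.

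To recover the displayed equation I would factor its three summands: each has the form $d_{\ast}(d_{\ast}-d_{\ast})$, e.g.\ $(c^2-\lambda_c^2)(-a^2+b^2+\lambda_a^2-\lambda_b^2)=d_c(d_b-d_a)$, so the left‑hand side equals $d_c^2(d_b-d_a)^2+d_b^2(d_c-d_a)^2+d_a^2(d_c-d_b)^2$. As a real sum of squares it vanishes iff each factor does, which generically is again $d_a=d_b=d_c$; thus the displayed single equation is an equivalent encoding of the circle condition. For the ``pair of common intersections'' assertion I would observe that this condition involves only $\lambda_a^2,\lambda_b^2,\lambda_c^2$; since the second common point $P'$ of the triad lies on every $\H_a^*,\H_b^*,\H_c^*$, its distance differences have the same absolute values, so $P$ satisfies the circle condition iff $P'$ does. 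Therefore solutions occur in pairs $\{P,P'\}$ which are exactly the two common points of the corresponding triad, and one labels them $P^*,Q^*$.

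The main obstacle is uniqueness, i.e.\ proving there is exactly one such pair. The relations $d_a=d_b$ and $d_b=d_c$ are genuinely transcendental in the Cartesian coordinates of $P$ (they contain the products $|PA|\,|PB|$, etc.), so a clean count is not immediate. I would either eliminate the radicals and bound the number of real solutions of the resulting polynomial system by a resultant/B\'ezout computation, consistent with the CAS verifications used elsewhere in the paper, or argue via the identity $d_a=4\,|PB|\,|PC|\sin^2(\angle BPC/2)$, recasting the condition as an equality of three such products and showing a single pair realizes it. Establishing that exactly one real pair survives is the delicate step.
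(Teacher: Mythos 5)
Your derivation is correct as far as it goes, and it in fact supplies more justification than the paper itself: the paper states this proposition without any proof (the displayed equation is prefaced by ``it can be shown,'' and, like several claims in the paper, rests on CAS verification), the only supporting material being the appendix formula for $\P^*$ --- which your root-matching computation on the three sidelines reproduces exactly. Your circle criterion is the standard one (a barycentric conic is a circle iff it is a combination of the circumcircle $a^2yz+b^2zx+c^2xy$ and $(x+y+z)$ times a line), the reduction to $d_a=d_b=d_c$ with $d_a=a^2-\lambda_a^2$ is right, and so is the factorization of the displayed sum of squares as $d_c^2(d_b-d_a)^2+d_b^2(d_c-d_a)^2+d_a^2(d_c-d_b)^2$, which shows it encodes precisely that condition away from the sidelines. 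The pairing argument is also sound: the condition depends only on $\lambda_a^2,\lambda_b^2,\lambda_c^2$, and the two common points of a triad determine the same three hyperbolas, hence the same six vertices and literally the same conic, which accounts for the phrase ``pair of common intersections.''

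The genuine gap is the one you flag yourself: existence and uniqueness of the pair $(P^*,Q^*)$. Reducing the circle condition to $d_a=d_b=d_c$ characterizes solutions but does not show that the real curves $\{d_a=d_b\}$ and $\{d_b=d_c\}$ meet in exactly one pair of points; your two proposed routes (elimination with a root count, or the identity $d_a=4\,|PB|\,|PC|\sin^2(\angle BPC/2)$) are plausible but not carried out, so the quantitative clause of the proposition remains unproved in your write-up. You should know, however, that the paper does not close this gap either: no proof accompanies the proposition, and the authors explicitly list the uniqueness of the analogous point $P^*$ for P-ellipses as an open question in their final section. In short, you have proved the characterization and the pairing --- everything the paper actually substantiates --- while the existence/uniqueness clause is unestablished on both sides.
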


\begin{figure}
\centering
\includegraphics[width=.6\textwidth,frame]{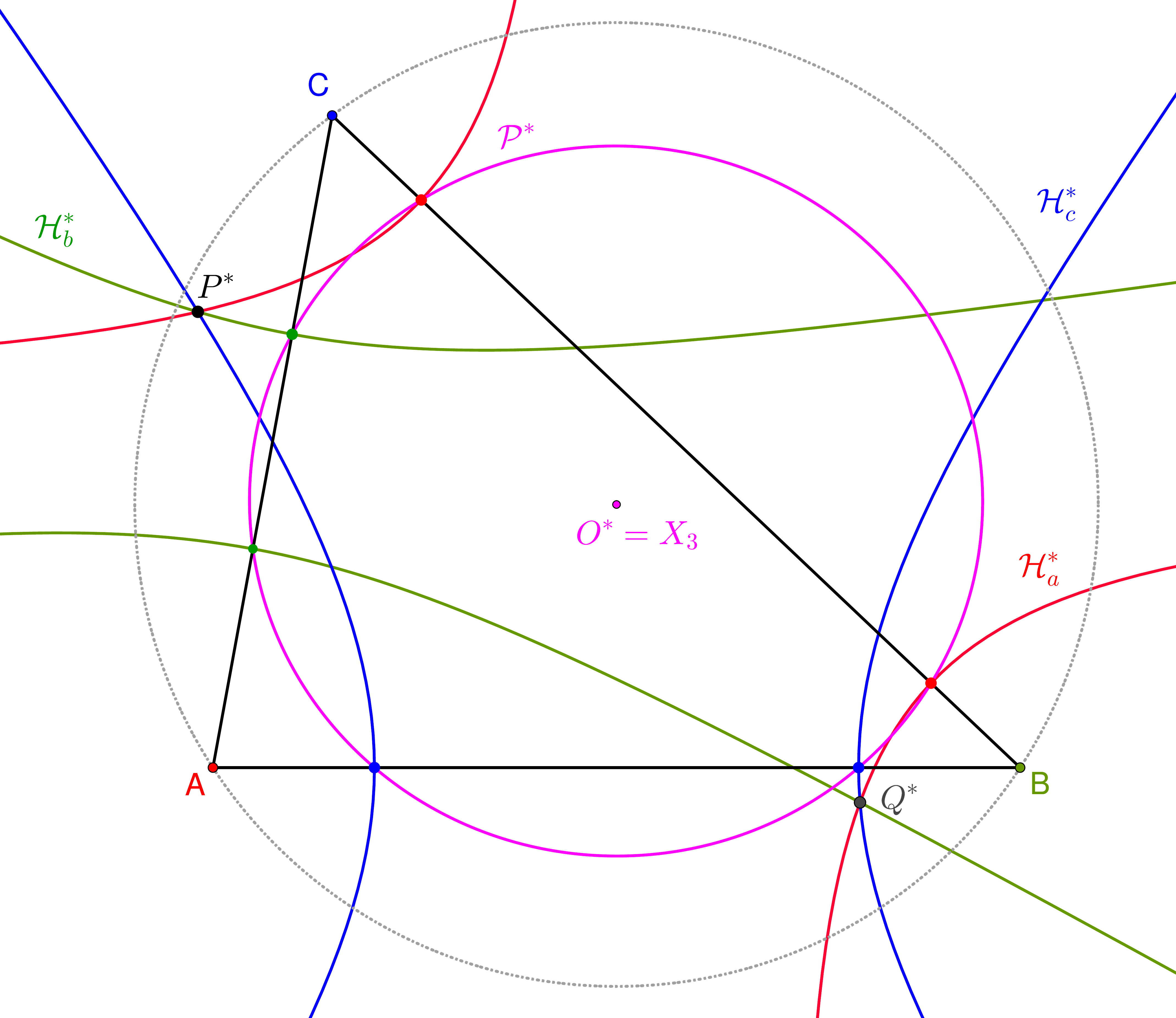}
\caption{Given $\triangle ABC$, there is a pair $P^*$ and $Q^*$ such that $\P^*$ (magenta) is a circle. Furthermore the latter is concentric with the circumcircle (dashed black) of $\triangle ABC$}
\label{fig:phyp-circular}
\end{figure}

\begin{definition}[reflection triangle]
The reflections $A',B',C'$ of a point $Q$ on the sides of $\triangle ABC$ are the vertices of the $Q$-reflection triangle.
\end{definition}

Surprisingly, we can construct a triangle such that the vertices of the 6 P-hyperbolas lie on a circle. In \cite{etc}, center $X_{55}$ is the internal center of similitude of the incircle and circumcircle. 

Referring to \cref{fig:x55}, experimental evidence supports the following ``needle in a haystack'' phenomenon:

\begin{conjecture}
Let $T'$ be the $X_{55}$-reflection triangle
of a reference triangle $T$. The 6 vertices of the P-hyperbolas of $T'$ passing through $X_{55}$-of-T lie on a circle, concentric with the circumcircle of $T'$ which coincides with $X_7$-of-T. 
\label{conj:x55}
\end{conjecture}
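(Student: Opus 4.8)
The plan is to establish all three assertions at once — concyclicity of the six vertices, concentricity of their circle with the circumcircle of $T'$, and the identification of the common center with $X_7$ of $T$ — by a direct geometric reduction rather than through the implicit circularity criterion. Write $P:=X_{55}(T)$; let $s=(a+b+c)/2$, let $h_a,h_b,h_c$ be the distances from $P$ to the sidelines $BC,CA,AB$, and let $R_a=|PA|$, $R_b=|PB|$, $R_c=|PC|$. Let $A',B',C'$ be the reflections of $P$ in those three sidelines, so $T'=\triangle A'B'C'$; write $O'$ and $R'$ for its circumcenter and circumradius, $M_a$ for the midpoint of $B'C'$, and $\lambda'_a=|PB'|-|PC'|$, $\lambda'_b$, $\lambda'_c$ for the focal data of the P-hyperbolas $\H_a^{*},\H_b^{*},\H_c^{*}$ of $T'$ through $P$.

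First I would locate the circle and its center simultaneously. The vertices of $\H_a^{*}$ lie on the focal axis $B'C'$, symmetric about its midpoint $M_a$ at distance equal to the semi-transverse axis $\alpha_a=\tfrac12|\lambda'_a|$. Since $O'M_a$ is the perpendicular bisector of the chord $B'C'$ of the circumcircle of $T'$, it is orthogonal to $B'C'$, so both vertices are equidistant from $O'$:
\[ d_a^2 = |O'M_a|^2+\alpha_a^2 = \left((R')^2-\tfrac{(a')^2}{4}\right)+\tfrac{(\lambda'_a)^2}{4} = (R')^2-\tfrac14\left((a')^2-(\lambda'_a)^2\right), \]
using the sign-free relation $|O'M_a|^2+(a'/2)^2=(R')^2$. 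Hence the six vertices lie on a common circle centered at $O'$ — automatically concentric with the circumcircle of $T'$ — exactly when $(a')^2-(\lambda'_a)^2$ is cyclically constant.

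Next I would compute $(a')^2-(\lambda'_a)^2$ from the position of $P$. Reflecting $P$ in the two sidelines through $A$ makes $\triangle AB'C'$ isosceles with legs $|AB'|=|AC'|=R_a$ and apex angle $2A$, so $a'=2R_a\sin A$; moreover $\lambda'_a=2(h_b-h_c)$ because $|PB'|=2h_b$, $|PC'|=2h_c$. Setting $\phi=\angle PAB$ gives $h_c=R_a\sin\phi$ and $h_b=R_a\sin(A-\phi)$, and the identity $\sin^2X-\sin^2Y=\sin(X+Y)\sin(X-Y)$ collapses the difference to
\[ (a')^2-(\lambda'_a)^2 = 16\,h_b h_c\cos^2(A/2), \]
with cyclic analogues. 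Cyclic constancy then reads $h_a:h_b:h_c=\cos^2(A/2):\cos^2(B/2):\cos^2(C/2)$; since $h_a:h_b:h_c=\tfrac{u}{a}:\tfrac{v}{b}:\tfrac{w}{c}$ for normalized barycentrics $(u,v,w)$ and $\cos^2(A/2)=s(s-a)/(bc)$, this forces $P=[a^2(s-a):b^2(s-b):c^2(s-c)]$, which is exactly $X_{55}$ — our hypothesis. So the six vertices are concyclic on a circle centered at $O'$, and $X_{55}$ is indeed a circularizing point of $T'$ in the sense of the earlier $P^{*},Q^{*}$ proposition.

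It remains to name $O'$. Here I would invoke that $T'$ is the image of the pedal triangle of $P$ under the homothety $h(P,2)$; its circumcircle is therefore the image of the pedal circle of $P$, whose center is the midpoint of $P$ and its isogonal conjugate $P^{*}$, and that midpoint is carried by $h(P,2)$ to $P^{*}$ itself. Thus $O'=X_3(T')$ is the isogonal conjugate of $P=X_{55}$, namely $[1/(s-a):1/(s-b):1/(s-c)]=X_7$, the Gergonne point of $T$, completing every claim. I expect the main obstacle to be not algebraic but configurational: I must confirm that the three P-hyperbolas are genuine real hyperbolas (so that their vertices exist) and control signs when $P$ or $O'$ lies outside $T'$ or when $T'$ is obtuse. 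I plan to absorb these by writing every step through squared lengths and the sign-free identity $|O'M_a|^2+(a'/2)^2=(R')^2$, and by citing the classical pedal-circle theorem and the known $X_{55}$–$X_7$ isogonal pairing to sidestep case analysis.
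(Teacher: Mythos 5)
There is no proof in the paper to compare against: the statement is posed purely as a conjecture supported by ``experimental evidence,'' and proving it is listed verbatim among the paper's open questions. So your proposal stands on its own, and having checked each step, I believe it is correct and essentially complete --- it would settle that open question. The pivotal computations verify: since $O'M_a\perp B'C'$ and the two vertices of $\H_a^*$ sit symmetrically on $B'C'$ at distance $\tfrac12|\lambda'_a|$ from $M_a$, both are at distance $d_a$ from $O'$ with $d_a^2=(R')^2-\tfrac14\bigl((a')^2-(\lambda'_a)^2\bigr)$, so concyclicity about $O'$ (with concentricity automatic) is equivalent to cyclic constancy of $(a')^2-(\lambda'_a)^2$. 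The reflection facts $|AB'|=|AC'|=R_a$, $\angle B'AC'=2A$, $|PB'|=2h_b$, $|PC'|=2h_c$ hold for interior $P$, and the identity $(a')^2-(\lambda'_a)^2=16\,h_bh_c\cos^2(A/2)$ is correct; I re-derived it via $\sin^2\!A-(\sin(A-\phi)-\sin\phi)^2=4\cos^2(A/2)\sin(A-\phi)\sin\phi$. Cyclic constancy is then exactly the trilinear proportion $h_a:h_b:h_c=\cos^2(A/2):\cos^2(B/2):\cos^2(C/2)$, i.e. $P=X_{55}(T)$; note that for the conjecture you only need the easy direction (substituting $X_{55}$ makes the product symmetric), while your forcing direction is a genuine bonus: it shows $X_{55}$ is the \emph{unique} interior point whose reflection-triangle construction yields a circle, which explains the paper's ``needle in a haystack'' language. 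Finally, $O'=X_3(T')$ equals the isogonal conjugate of $P$ by the pedal-triangle homothety $h(P,2)$ together with the classical common-pedal-circle theorem, and the isogonal conjugate of $X_{55}$ is indeed $X_7$. A notable merit of your route over the paper's CAS-style criteria (e.g., its $P^*,Q^*$ circularity condition) is that every quantity is expressed in the data of the reference triangle $T$ rather than in the algebraically unwieldy sidelengths of $T'$.

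Two loose ends to nail down in a final write-up. First, genuineness of the hyperbolas follows from your own formula: $16h_bh_c\cos^2(A/2)>0$ for interior $P$ gives $|\lambda'_a|<a'$, so each conic is a real hyperbola unless $\lambda'_a=0$; for $P=X_{55}$ that happens iff $B=C$, in which case $\H_a^*$ degenerates to the perpendicular bisector of $B'C'$ and its vertices coalesce at $M_a$ --- your distance formula still returns the correct limit, but the isosceles case deserves an explicit sentence. Second, the identification of the center rests entirely on two classical inputs (the reflections of $P$ in the sidelines are equidistant from its isogonal conjugate, equivalently your pedal-circle argument, and the $X_7$/$X_{55}$ isogonal pairing), so these should be cited or proved rather than merely invoked.
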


\begin{figure}
\centering
\includegraphics[width=.8\textwidth,frame]{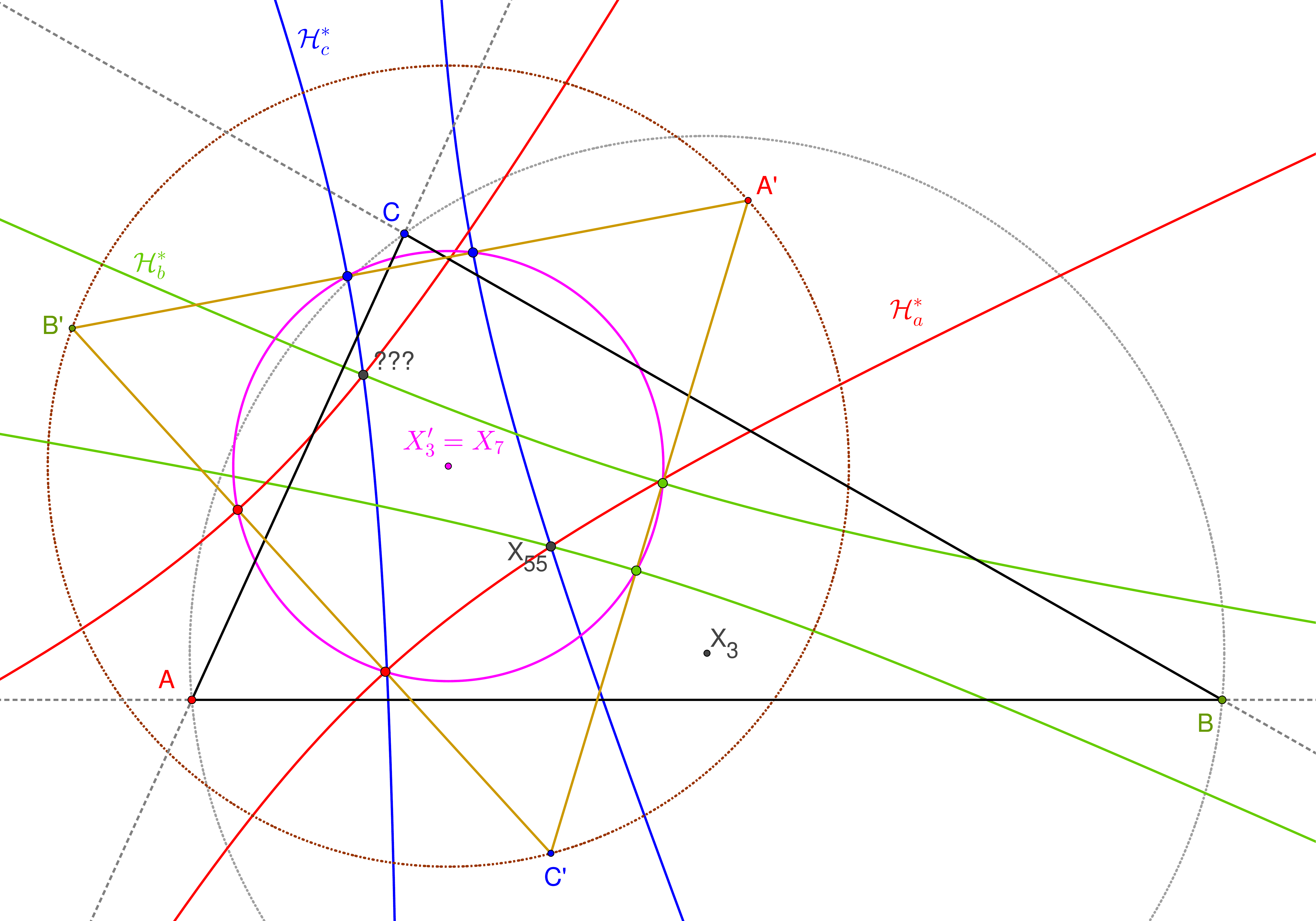}
\caption{The vertices of P-hyperbolas $\H_a^*$, $\H_b^*$, and $\H_c^*$ (red, green, blue) passing through $X_{55}$ of $\triangle ABC$,  with foci on pairs of vertices of the $X_{55}$-reflection triangle $\triangle A'B'C'$ (gold),  lie on a circle (magenta), concentric with the circumcircle (dotted brown) of $\triangle A'B'C'$, whose circumcenter is the Gergonne point $X_7$ of the reference. Note that said 3 hyperbolas meet at a second mystery point ``???''.}
\label{fig:x55}
\end{figure}

Referring to \cref{fig:phyps-pells}:

\begin{proposition}
The $\H_a^*$ P-hyperbola passes through the non-P intersection $A'$ between P-ellipses $\E_b^*$, and $\E_c^*$. The same holds for $\H_b^*,\H_c^*$, cyclically.
\end{proposition}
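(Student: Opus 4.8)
The plan is to imitate the purely metric (bifocal) argument used earlier for the V-ellipse/V-hyperbola incidence, replacing the triangle side lengths by the focal-sum constants $\delta_b,\delta_c$ of the two P-ellipses. Referring to \cref{fig:phyps-pells}, I first record the defining focal equations. Since $\E_b^*$ has foci $C,A$ and passes through $P$, every point $X$ on it satisfies $|XC|+|XA|=\delta_b=|PC|+|PA|$; likewise $\E_c^*$, with foci $A,B$, is cut out by $|XA|+|XB|=\delta_c=|PA|+|PB|$. Let $A'$ be a common point of $\E_b^*$ and $\E_c^*$ other than $P$; then $A'$ satisfies both of these equations at once.

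Next I would subtract the two relations at $A'$. The two ellipses share the focus $A$, so the common term $|A'A|$ cancels and one is left with
\[
|A'C|-|A'B| \;=\; \delta_b-\delta_c \;=\; \bigl(|PC|+|PA|\bigr)-\bigl(|PA|+|PB|\bigr) \;=\; |PC|-|PB|,
\]
that is, $|A'B|-|A'C|=|PB|-|PC|=\lambda_a$. This is exactly the constant focal difference that cuts out $\H_a^*$: the latter has foci $B,C$ and passes through $P$, so its branch through $P$ is the locus $|XB|-|XC|=|PB|-|PC|=\lambda_a$. Hence $A'$ lies on $\H_a^*$, and in fact on the same branch as $P$. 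The statements for $\H_b^*$ and $\H_c^*$ follow verbatim under the cyclic relabelling $A\to B\to C$, $a\to b\to c$.

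I expect the only point needing care to be the bookkeeping of signs and branches: one must check that the constant produced by the subtraction is $+\lambda_a=|PB|-|PC|$ (the value characterizing the branch of $\H_a^*$ through $P$) and not its negative, so that $A'$ genuinely lands on $\H_a^*$. It is worth emphasizing that the argument never uses the existence or position of the second intersection $A'$; it shows that \emph{any} common point of $\E_b^*$ and $\E_c^*$ automatically satisfies the focal-difference equation of $\H_a^*$. Thus the proposition is an identity among the three focal relations rather than a coordinate computation, and no CAS is required; this also explains geometrically why $P$ itself, the trivial common point, already lies on all three P-hyperbolas.
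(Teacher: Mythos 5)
Your proof is correct and is essentially identical to the paper's: both subtract the two focal-sum equations of $\E_b^*$ and $\E_c^*$ at the common point, cancel the shared distance to the common focus $A$, and recognize the resulting focal-difference relation $|A'B|-|A'C|=|PB|-|PC|$ as defining the branch of $\H_a^*$ through $P$. Your added remarks on sign/branch bookkeeping only make explicit what the paper leaves implicit.
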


\begin{proof}
Referring to figure 
\cref{fig:phyps-pells}, let $A'\in \E_b^*$;
then $A'A+A'C=PA+PC$. If 
$A'$ is also contained in $\E_c^*$, then
 $A'A+A'B=PA+PB$. Subtracting
 $A'B-A'C=PB-PC$, i.e., both $A'$ and $P$ lie on the same branch of $\in \H_a^*$.
\end{proof}

\begin{figure}
\centering
\includegraphics[width=.7\textwidth,frame]{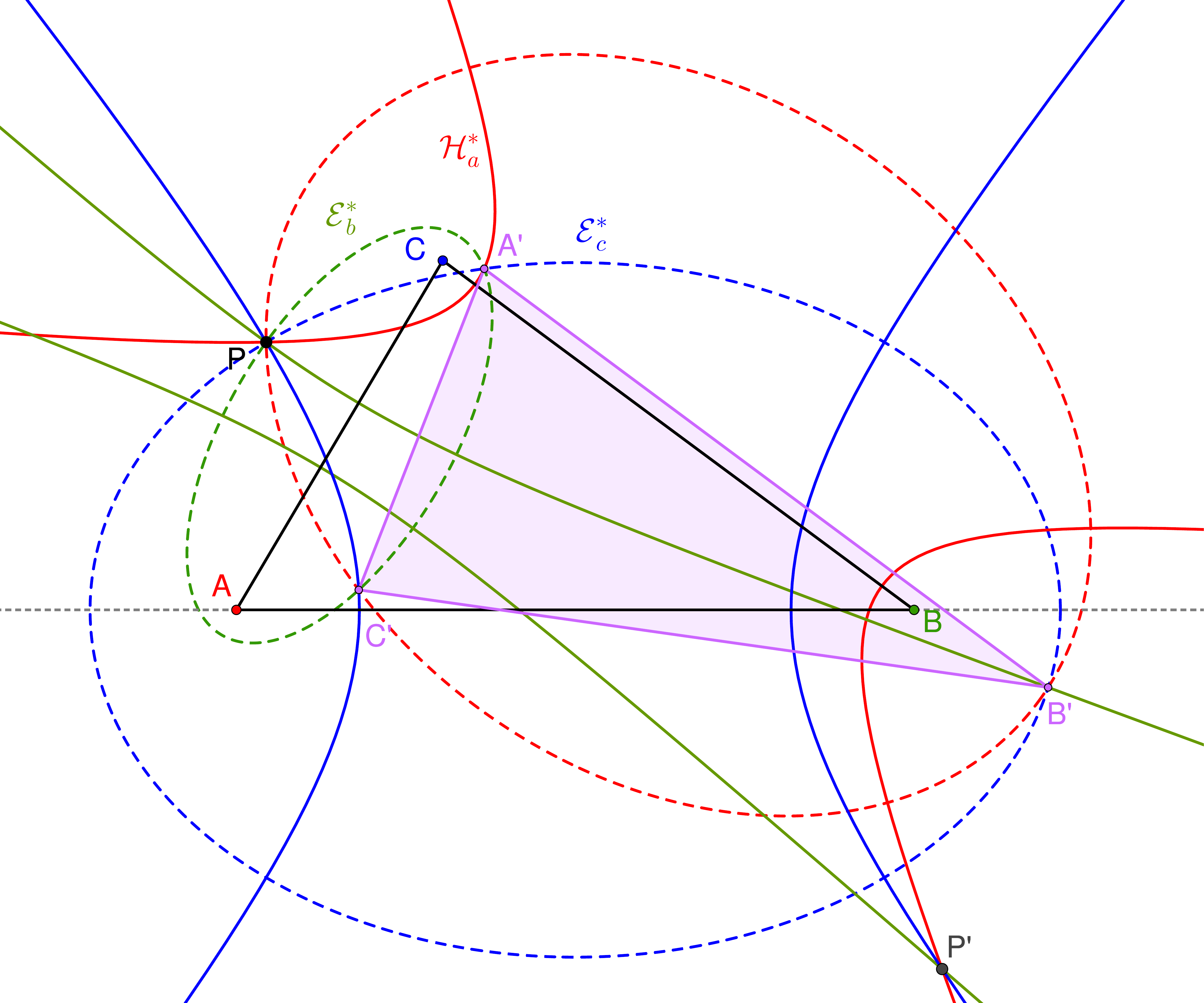}
\caption{The 3 P-hyperbolas (solid red, green, blue) also pass through the 3 non-P intersections $A',B',C'$ between pairs of P-ellipses, e.g., $\H_a^*$ passes through the intersection $A'$ of $\E_b^*$ and $\E_c^*$. The triangle with vertices on $A',B',C'$ is shown (purple).}
\label{fig:phyps-pells}
\end{figure}

\begin{figure}
\centering
\includegraphics[width=\textwidth]{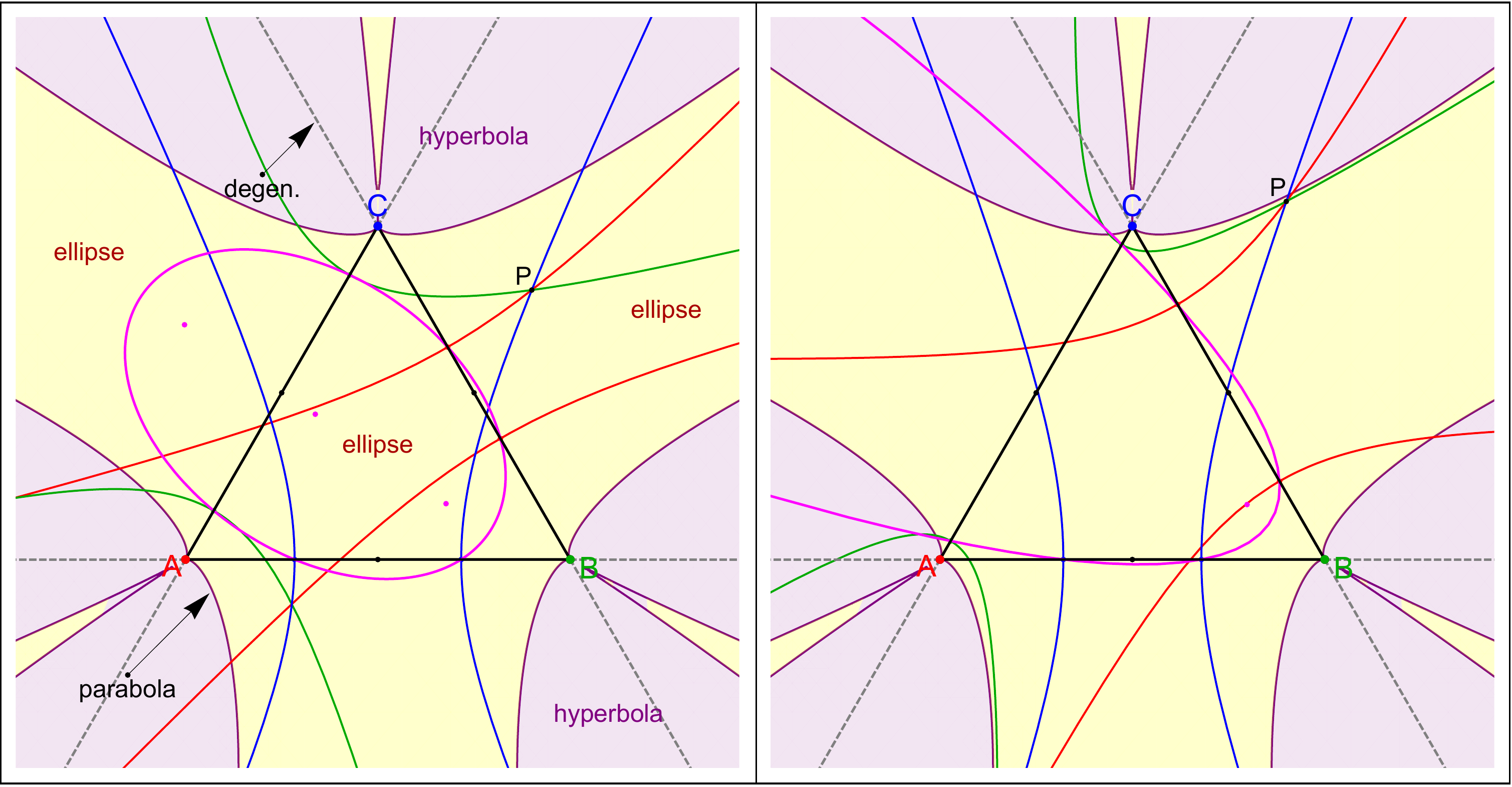}
\caption{The 6-point conic $P^*$ (magenta) through the vertices of P-hyperbolas (red, green, blue) is an ellipse if $P$ lies in the yellow (resp.) purple region. It is degenerate if $P$ is on any sideline (dashed black). In the left (resp. right) $P$ is in the yellow region (at the interface) and therefore $P^*$ (magenta) is an ellipse (resp. parabola).}
\label{fig:phyp-zones}
\end{figure}

As before, let $a,b,c$ be the sidelengths, and $\lambda_a$, $\lambda_b$, and $\lambda_c$ as above. As shown in \cref{fig:phyp-zones}, the plane of a $\triangle ABC$ can be split into zone where $\P^*$ is an ellipse, a hyperbola, or a parabola. In particular:

\begin{proposition}
The conic $\P^*$ through the 6 vertices of the P-hyperbolas is a parabola if:
\begin{align*}
&(c^2 \lambda_a^2 \lambda_b^2)^2 + (b^2 \lambda_a^2 \lambda_c^2)^2 + (a^2 \lambda_b^2 \lambda_c^2)^2 +
\mu (\mu (-3 + 4 (\lambda_a^2/a^2 + \lambda_b^2/b^2 + \lambda_c^2/c^2)) + \\
&  2 \lambda_a^2 \lambda_b^2 \lambda_c^2 (6 - \lambda_a^2/a^2 - \lambda_b^2/b^2 - \lambda_c^2/c^2) -
    6  (c^2 \lambda_a^2 \lambda_b^2 + b^2 \lambda_a^2 \lambda_c^2 + a^2 \lambda_b^2 \lambda_c^2))=0
\end{align*}
where $\mu=(a b c)^2$. Furthermore, $\P^*$ is degenerate if $P$ lies on either (infinite extension) of the sidelines of the triangle.
\end{proposition}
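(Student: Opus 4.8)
The plan is to turn both claims into linear algebra on the matrix of the six-point conic, using explicit barycentrics for the vertices. As in the V-hyperbola case, $\H_a^*$ is centred at the midpoint of $BC$ with transverse semi-axis $|\lambda_a|/2$, so its vertices are $A_1=[0,a+\lambda_a,a-\lambda_a]$ and $A_2=[0,a-\lambda_a,a+\lambda_a]$, with $B_1,B_2$ and $C_1,C_2$ obtained cyclically. First I would fit a symmetric matrix $M$, with diagonal $(p,q,r)$ and off-diagonal entries $M_{23}=s$, $M_{13}=t$, $M_{12}=u$, to the conic $px^2+qy^2+rz^2+2s\,yz+2t\,zx+2u\,xy=0$ by requiring it to vanish at the six vertices. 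The preceding proposition guarantees this $6\times6$ homogeneous system is consistent, and Cramer's rule yields entries $p,\dots,u$ that are \emph{polynomials} in $a,b,c,\lambda_a,\lambda_b,\lambda_c$.

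For the conic type I would use the barycentric criterion that a conic is a parabola precisely when it is tangent to the line at infinity $x+y+z=0$. With $\ell=(1,1,1)^{\top}$ this tangency is the dual condition $\ell^{\top}\mathrm{adj}(M)\,\ell=0$, i.e. the sum of the nine cofactors of $M$ vanishes:
\[ (qr-s^2)+(pr-t^2)+(pq-u^2)+2\bigl[(st-ur)+(us-qt)+(ut-ps)\bigr]=0 . \]
Substituting the fitted entries and pulling out the common factor $\mu=(abc)^2$, I expect this to collapse to the displayed degree-$12$ polynomial in $a,b,c,\lambda_a,\lambda_b,\lambda_c$. Verifying that the cofactor sum equals that expression is the crux of the argument and the step I would entrust to a CAS: the obstacle is one of symbolic bulk, since the cofactor sum is a dense polynomial and matching it to the stated symmetric form (with $\mu$ appearing as an overall book-keeping factor) demands disciplined collection of the $\lambda$-monomials.

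For degeneracy I would argue geometrically. Suppose $P$ lies on a sideline produced beyond the segment, say on line $BC$ with $P$ outside $[B,C]$; then $\bigl||PB|-|PC|\bigr|=|BC|=a$, so $\lambda_a=\pm a$ and the two vertices of $\H_a^*$ coincide with its foci, i.e. $\{A_1,A_2\}=\{B,C\}$, since $[0,a\mp a,a\pm a]$ is $[0,1,0]$ or $[0,0,1]$. The other four vertices retain their cyclic positions: $B_1,B_2$ lie on line $CA$ $(y=0)$ and $C_1,C_2$ on line $AB$ $(z=0)$. Thus the six points split into two collinear triples, $\{C,B_1,B_2\}\subset\{y=0\}$ and $\{B,C_1,C_2\}\subset\{z=0\}$; since a conic meeting a line in three points must contain that line, $\P^*$ contains both $CA$ and $AB$ and therefore equals the degenerate line pair $yz=0$. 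The same holds cyclically for the other two sidelines. It is exactly this collapse that the hypothesis \emph{infinite extension} records: for $P$ interior to a side one has instead $A_1=P$ with the six vertices in convex position, so there $\P^*$ is a proper conic.
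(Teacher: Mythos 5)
Your proposal is correct, and for the parabola clause it is essentially the paper's own (implicit) route: the authors' justification is a CAS computation on the six-point conic, whose closed form they record in the appendix as
\begin{equation*}
x^2+y^2+z^2-\frac{2(a^2+\lambda_a^2)}{a^2-\lambda_a^2}\,yz-\frac{2(b^2+\lambda_b^2)}{b^2-\lambda_b^2}\,zx-\frac{2(c^2+\lambda_c^2)}{c^2-\lambda_c^2}\,xy=0,
\end{equation*}
so you can skip the Cramer-fitting step entirely by citing that equation (your fitted $M$ is this one up to scale, with $p=q=r=1$), and your criterion $\ell^{\top}\mathrm{adj}(M)\,\ell=0$ then does reproduce the displayed condition: a spot check shows the stated polynomial equals the cofactor sum multiplied by $-\tfrac14\left[(a^2-\lambda_a^2)(b^2-\lambda_b^2)(c^2-\lambda_c^2)\right]^2$, a factor which is nonzero precisely away from the degenerate locus, so the two vanishing conditions agree where the conic is proper. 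Your synthetic argument for the degeneracy clause is a genuine addition, since the paper offers no proof of it: $\lambda_a=\pm a$ forces $\{A_1,A_2\}=\{B,C\}$, the six points split into the collinear triples $\{C,B_1,B_2\}$ and $\{B,C_1,C_2\}$, and a conic meeting a line in three distinct points contains it, so $\mathcal{P}^*$ is the line pair $CA\cup AB$ (equivalently, clearing the denominator $a^2-\lambda_a^2$ in the appendix equation at $\lambda_a^2=a^2$ leaves exactly $yz=0$). You also resolve the ambiguous parenthetical ``(infinite extension)'' correctly: for $P$ interior to a side one has $|\lambda_a|<a$, one vertex of $\mathcal{H}_a^*$ is $P$ itself, and the conic is in fact non-degenerate there, so the degeneracy really is confined to the extensions beyond the segments.
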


\section{Open Questions}
The following questions are posed to the reader:

\begin{compactenum}
    \item \cref{fig:vells-zones}: what is the locus of the focus of the Yiu conic over $C$ along the parabola locus?
    \item \cref{fig:vells-covertices-quad}: prove the 6-point co-vertex conic is always a hyperbola and explain why there are two (3:3 and 5:1) distributions of co-vertices over the branches of the conic.
    \item \cref{fig:circ-anticev}(left): Prove $P^*$ is unique.
    \item \cref{fig:circ-anticev}(right): given $\triangle A'B'C'$ can one always find an inscribed $\triangle ABC$ such that the former is its $X_{3}$-anticevian triangle?
    \item \cref{fig:equi}: how do the zones of 6-vertex conic type deform as one moves $C$ away from the equilateral configuration? What is the locus of the center $O^*$ of the degenrate conic over $P$ on the 3 branches of the inner deltoid? What is the locus of the focus of the conic over the 6 arcs where the conic is a parabola? Prove if a hyperbola, said conic can never be rectangular.
    \item \cref{fig:pells-covertices}: Prove that if $P$ is on the locus, the center $O\dagger$ of the co-vertex conic is on the incircle. What does the locus of $P$ look like if $\triangle ABC$ is not an equilateral? Over $P$ on said locus, what is the locus of $O\dagger$? 
    \item prove \cref{conj:x55}. Provide an expression for the second triple intersection point of the 3 P-hyperbolas.
    \item \cref{fig:vcubic} What are interesting loci for $C$ (with $A,B$ fixed) with respect to properties and/or degeneracies of the 14-point quartic?
    \item \cref{fig:p-hyp}: describe the map $P\to P'$ and/or $P \to O^*$? What is the image of a lattice under it?
    \item \cref{fig:x55}: given $\triangle A'B'C'$ can one always find a $\triangle ABC$ such that the former is its $X_{55}$-reflection triangle? 
\end{compactenum}


\section*{Acknowledgements}
\noindent We would like to thank Arseniy Akopyan and Boris Odehnal for useful discussions. The first author is fellow of CNPq and coordinator of Project PRONEX/ CNPq/ FAPEG 2017 10 26 7000 508.

\appendix

\section*{Long Barycentric Equations}
\label{app:barys}
Here we provide barycentric equations and coordinates (with respect to the reference $\triangle ABC$) of various associated objects. Let $a,b,c$ be the reference's sidelengths. Let $\E_a^\dagger$ denote the ellipse with foci on $B'C'$, and through $A$ of the reference. Note: the long expression below were kept verbatim so as to facilitate copy-paste.

\subsection*{A-ellipse:} The barycentrics $[x,y,z]$ of $\E_a^\dagger$ satisfy:
{\scriptsize
\begin{verbatim}
8*b^4*c^4*(a^2+b^2-c^2)*(a^2-b^2+c^2)*(a^6*b^4-3*a^4*b^6+3*a^2*b^8-b^10+3*a^4*b^4*c^2-
6*a^2*b^6*c^2+3*b^8*c^2+a^6*c^4+3*a^4*b^2*c^4+6*a^2*b^4*c^4-2*b^6*c^4-3*a^4*c^6-
6*a^2*b^2*c^6-2*b^4*c^6+3*a^2*c^8+3*b^2*c^8-c^10)*x^2+4*b^2*c^4*(a^2+b^2-c^2)^2*(a^2-
b^2+c^2)*(a^8*b^2-2*a^6*b^4+2*a^2*b^8-b^10-a^8*c^2-2*a^6*b^2*c^2+10*a^4*b^4*c^2-
10*a^2*b^6*c^2+3*b^8*c^2+4*a^6*c^4+4*a^4*b^2*c^4+10*a^2*b^4*c^4-2*b^6*c^4-6*a^4*c^6-
6*a^2*b^2*c^6-2*b^4*c^6+4*a^2*c^8+3*b^2*c^8-c^10)*x*y+a^2*c^4*(a^2-b^2-c^2)*(a^2+b^2-
c^2)*(a^12-7*a^8*b^4+16*a^6*b^6-21*a^4*b^8+16*a^2*b^10-5*b^12-6*a^10*c^2-4*a^8*b^2*c^2+
12*a^6*b^4*c^2+24*a^4*b^6*c^2-38*a^2*b^8*c^2+12*b^10*c^2+15*a^8*c^4+16*a^6*b^2*c^4+
6*a^4*b^4*c^4+32*a^2*b^6*c^4-5*b^8*c^4-20*a^6*c^6-24*a^4*b^2*c^6-20*a^2*b^4*c^6-8*b^6*c^6+
15*a^4*c^8+16*a^2*b^2*c^8+9*b^4*c^8-6*a^2*c^10-4*b^2*c^10+c^12)*y^2-4*b^4*c^2*(a^2+b^2-
c^2)*(a^2-b^2+c^2)^2*(a^8*b^2-4*a^6*b^4+6*a^4*b^6-4*a^2*b^8+b^10-a^8*c^2+2*a^6*b^2*c^2-
4*a^4*b^4*c^2+6*a^2*b^6*c^2-3*b^8*c^2+2*a^6*c^4-10*a^4*b^2*c^4-10*a^2*b^4*c^4+2*b^6*c^4+
10*a^2*b^2*c^6+2*b^4*c^6-2*a^2*c^8-3*b^2*c^8+c^10)*x*z-2*a^2*b^2*c^2*(a^2-b^2-c^2)*(a^2+
b^2-c^2)*(a^2-b^2+c^2)*(a^10-3*a^8*b^2+2*a^6*b^4+2*a^4*b^6-3*a^2*b^8+b^10-3*a^8*c^2+
8*a^6*b^2*c^2-14*a^4*b^4*c^2+16*a^2*b^6*c^2-7*b^8*c^2+2*a^6*c^4-14*a^4*b^2*c^4-26*a^2*b^4*c^4+
6*b^6*c^4+2*a^4*c^6+16*a^2*b^2*c^6+6*b^4*c^6-3*a^2*c^8-7*b^2*c^8+c^10)*y*z+a^2*b^4*(a^2-b^2-
c^2)*(a^2-b^2+c^2)*(a^12-6*a^10*b^2+15*a^8*b^4-20*a^6*b^6+15*a^4*b^8-6*a^2*b^10+b^12-
4*a^8*b^2*c^2+16*a^6*b^4*c^2-24*a^4*b^6*c^2+16*a^2*b^8*c^2-4*b^10*c^2-7*a^8*c^4+12*a^6*b^2*c^4+
6*a^4*b^4*c^4-20*a^2*b^6*c^4+9*b^8*c^4+16*a^6*c^6+24*a^4*b^2*c^6+32*a^2*b^4*c^6-8*b^6*c^6-
21*a^4*c^8-38*a^2*b^2*c^8-5*b^4*c^8+16*a^2*c^10+12*b^2*c^10-5*c^12)*z^2 = 0
\end{verbatim}
}

\subsection*{Major vertices:}
Let $S$ be twice the area of the reference and:
{\scriptsize
\begin{verbatim}
rt = sqrt(a^6-3*a^2*b^4+2*b^6+6*a^2*b^2*c^2-2*b^4*c^2-3*a^2*c^4-2*b^2*c^4+2*c^6))
\end{verbatim}
}
The two major vertices of $\E_a^\dagger$ are given by:

{\scriptsize
\begin{verbatim}
[(a*(a^2-b^2-c^2)*((b^2-c^2)*(a^4*b^2-2*a^2*b^4+b^6+a^4*c^2+4*a^2*b^2*c^2-b^4*c^2-2*a^2*c^4-
b^2*c^4+c^6)+/-2*a^3*S*rt))/(a*(a^4*b^2-2*a^2*b^4+b^6+a^4*c^2+4*a^2*b^2*c^2-b^4*c^2-
2*a^2*c^4-b^2*c^4+c^6)+/-2*(b^2-c^2)*S*rt),b^2*(-a^2+b^2-c^2),-(c^2*(-a^2-b^2+c^2))]
\end{verbatim}
}
\subsection*{P-ellipse 6-point circle}

The center $X_3'$ of the 6-point circle of \cref{prop:anticevian} lies on the Van Aubel line ($X_4 X_6$) of the reference. It can be regarded as the circumcenter of the $X_3$-anticevian and is given by barycentrics $[f(a,b,c),f(b,c,a),f(c,a,b)]$ where $f(a,b,c)$ is given by:

{\scriptsize
\begin{verbatim}
(a^14-5*a^12*b^2+9*a^10*b^4-5*a^8*b^6-5*a^6*b^8+9*a^4*b^10-5*a^2*b^12+b^14-5*a^12*c^2+10*a^10*b^2*c^2-
13*a^8*b^4*c^2+28*a^6*b^6*c^2-31*a^4*b^8*c^2+10*a^2*b^10*c^2+b^12*c^2+9*a^10*c^4-13*a^8*b^2*c^4-
30*a^6*b^4*c^4+22*a^4*b^6*c^4+21*a^2*b^8*c^4-9*b^10*c^4-5*a^8*c^6+28*a^6*b^2*c^6+22*a^4*b^4*c^6-
52*a^2*b^6*c^6+7*b^8*c^6-5*a^6*c^8-31*a^4*b^2*c^8+21*a^2*b^4*c^8+7*b^6*c^8+9*a^4*c^10+10*a^2*b^2*c^10-
9*b^4*c^10-5*a^2*c^12+b^2*c^12+c^14)*a^2
\end{verbatim}
}

\subsection*{P-hyperbolas}

Let $La= |PB|-|PC|$, $Lb=|PC|-|PA|$, and $Lc=|PA|-|PB|$. Points on the $\H_a^*$ P-hyperbola satisfy:

{\scriptsize
\begin{verbatim}
(2*(b^2-c^2-La^2)*p*q+(a^2-La^2)*q^2-2*(b^2-c^2+La^2)*p*r-2*(a^2+La^2)*q*r+(a^2-La^2)*r^2)*x^2-
2*(b^2-c^2-La^2)*p^2*x*y-(a^2-La^2)*p^2*y^2+2*(b^2-c^2+La^2)*p^2*x*z+2*(a^2+La^2)*p^2*y*z-
(a^2-La^2)*p^2*z^2 = 0
\end{verbatim}}

The conic $\P^*$ throught the vertices of the 3 P-hyperbolas is given by:
{\scriptsize
\begin{verbatim}
x^2+y^2+z^2-(2*(a^2+La^2)*y*z)/(a^2-La^2)-(2*(b^2+Lb^2)*z*x)/(b^2-Lb^2)-(2*(c^2+
Lc^2)*x*y)/(c^2-Lc^2)=0
\end{verbatim}}

The first barycentric coordinate for its center $X_{5452}$ is given by:

{\scriptsize
\begin{verbatim}
(a^2*((-2*La^2)/(a^2-La^2)+(b^2+Lb^2)/(b^2-Lb^2)+(c^2+Lc^2)/(c^2-Lc^2)))/(a^2-La^2)
\end{verbatim} }
With the other two computed cyclically.


\bibliographystyle{maa}
\bibliography{999_refs_orig,999_refs_rgk}

\end{document}